\newtheorem{theorem}{Theorem}[section]
\newtheorem{lemma}[theorem]{Lemma}
\newtheorem{definition}[theorem]{Definition}
\newtheorem{assumption}[theorem]{Assumption}
\theoremstyle{remark}
\newtheorem{remark}[theorem]{Remark}
\newtheorem{example}[theorem]{Example}
\numberwithin{equation}{section}
\begin{document}

\begin{frontmatter}
\title{A Functional Large and Moderate Deviation Principle for Infinitely Divisible Processes Driven by Null-Recurrent Markov Chains}
\runtitle{LDP for ID Processes}

\begin{aug}
\author{\fnms{Souvik} \snm{Ghosh}\ead[label=e1]{ghosh@stat.columbia.edu}}

\runauthor{S. Ghosh}

\affiliation{Columbia University}

\address{Department of Statistics\\
Columbia University \\
New York, NY 10027\\
\printead{e1}}

\end{aug}

\begin{abstract}
Suppose  $ E$ is a space with a null-recurrent Markov kernel $ P$.  Furthermore, suppose there are infinite particles with variable weights on $ E$ performing a random walk following $ P$. Let $ X_{ t}$ be a weighted functional of the position of particles at time $ t$.  Under some conditions on the initial distribution of the particles the process $ (X_{ t})$ is stationary over time.  Non-Gaussian infinitely divisible (ID) distributions turn out to be natural candidates for the initial distribution and then the process $ (X_{ t})$ is ID.  We prove  a functional large and moderate deviation principle for the partial sums of the process $ (X_{ t})$. The recurrence of the Markov Kernel $ P$ induces long memory in the process $ (X_{ t})$ and that is reflected in the large deviation principle. It has been observed in certain short memory processes that  the large deviation principle is very similar to that of an i.i.d. sequence. Whereas, if the process is long range dependent the large deviations change dramatically. We show that  a similar phenomenon is observed for  infinitely divisible processes driven by Markov chains. 
\end{abstract}

\begin{keyword}[class=AMS]
\kwd[Primary ]{60F10}
\kwd{60G57}
\kwd[; secondary ]{60G10}
\kwd{60J05}
\end{keyword}

\begin{keyword}
\kwd{Indinitely divisible}
\kwd{Markov chain} \kwd{Harris recurrence} \kwd{null recurrence} \kwd{large deviations} \kwd{long memory} \kwd{long range dependence} \kwd{weak convergence} \kwd{random measures}
\end{keyword}

\end{frontmatter}

\begin{section}{Introduction}
A $ \mathbb{R}^{ d}$-valued random variable $ X$ is said to have an infinitely divisible (ID) distribution if for any integer $ k>0$  there exists independent and identically distributed (i.i.d.) random variables $ W_{ 1}^{ ( k)},\ldots , W_{ k}^{ ( k)}$ such that 
\[ 
	X \stackrel{ d}{ =}W_{ 1}^{ ( k)}+\cdots+W_{ k}^{ ( k)},
\]
where $ \stackrel{ d}{= } $ denotes equality in distribution.
The class of infinitely divisible distributions is  broad and includes,  for example, the Gaussian, Cauchy, stable, compound Poisson, negative binomial, gamma, Student's t, F, Gumbel, log-normal, generalized Pareto and the logistic distribution among others. \cite{sato:1999} gives an excellent exposure to this topic. 

A process $ (   X_{ n},n\in \mathbb{Z}  )$ is said to be an infinitely divisible process if every finite dimensional marginal of the process is infinitely divisible. This is again a broad class of processes, with the most popular being the  L\'evy process.  
\cite{maruyama:1970} characterized the structure of infinitely divisible processes and since then several articles have contributed to the better understanding of these processes:   \cite{rajput1989spectral}, \cite{rosinski:1990}, \cite{rosinskiZak:1997}, \cite{marcus2005continuity} and \cite{roy2007ergodic} are  a few to mention. \cite{rosinski:2007} is a very instructive lecture note in this regard. 

  We consider  a class of long range dependent stationary ID processes which arise naturally in infinite particle systems.  Suppose $ E$ is a space with a null-recurrent Markov kernel $ P$. Also suppose there are infinitely many particles indexed by $ i\in \mathbb{N}$, where the $ i$th particle has weight $ W_{ i}$ and its position at time $ t$ is $ Z_{ i,t}$. Define the random variable $ X_{ t}$ as the weighted functional of the position of the particles:
  \begin{equation}\label{eq:vaguedescofX} 
 	X_{ t}=\sum_{ i\ge 1} W_{ i}f(Z_{ i,t}) \ \ \ \ \mbox{ for all } t\ge 0.   
\end{equation}
Under certain conditions on the function $ f:E\to \mathbb{R}$ and the weights $ W_{ i}$ the random variable $ X_{ t}$ is well defined. For a suitable initial distribution of the particles, the process $ (X_{ t})$ is a stationary and ergodic  ID process with  light tailed marginals, i.e., 
\begin{equation}\label{eq:mgffinite}
 	E  \big[ \exp ( \lambda  X_{ 0}) \big] < \infty, \mbox{ for all }  \lambda \in \mathbb{R}.
\end{equation} Infinitely divisible distributions arise as limits of such systems and hence are natural candidates for the initial distribution. \cite{liggett1988systems} and \cite{hoffman1995convergence} discusses convergence of system independent particles with unit mass to Poisson random measures. Invariance properties of similar particle systems have been studied by \citep[p.404-407]{doob1953stochastic} and \cite{spitzer1977stochastic}.

 We study large deviation principle for normalized partial sums of the process $ (X_{ t})$.  A sequence of probability measures $(\mu_n)$ on the Borel subsets of a topological space is said to satisfy the \emph{large deviation principle}, or LDP, with \emph{speed} $b_n$, and \emph{rate function}  $I(\cdot)$, if for any Borel set $A$, 
\begin{equation}\label{ldpdef}
-\inf_{x\in A^{\circ}}I(x)\le \liminf_{n\rightarrow \infty}\frac{1}{b_n}\log \mu_n(A)\le \limsup_{n\rightarrow \infty}\frac{1}{b_n}\log \mu_n(A)\le -\inf_{x\in \bar{A}}I(x), 
\end{equation}
where $ A^{\circ}$ and $\bar{A}$ are the interior and closure of $A$, respectively. A rate function is a non-negative lower semi-continuous function. Recall  that a function is said to be lower semicontinuous if its level sets are closed. A rate function is said to be  \emph{good} if it has compact level sets.  We refer to \cite{varadhan:1984}, \cite{deuschel:stroock:1989} or \cite{dembo:zeitouni:1998}  for a detailed treatment of  large deviations. 

 We  take $ \mu_{ n}$ to be the law of the random variable $ a_{ n}^{ -1}(X_{ 1}+\cdots+X_{ n})$ or its functional counterpart. 
The sequence $ (a_{ n})$ is growing faster than the rate required for a non-degenerate weak limit.
 It is standard to make a distinction between ``proper large deviations'' and ``moderate deviations''. Moderate deviations is the regime where the normalizing sequence is faster than the rate required for  weak convergence but slow enough so that the rate function reflects the non-degenerate weak limit. The proper large deviations is the regime where this effect in the rate function disappears. For  i.i.d. sequences $X_1,X_2,\ldots$ the proper large deviations regime corresponds to the linear growth of the normalizing sequence and then the speed sequence is also linear; see Theorem 2.2.3 (Cram\'er's Theorem) in \cite{dembo:zeitouni:1998}. In that situation the moderate deviation regime is the one where $ (a_{ n})$ grows slower than a linear rate but faster than $ \sqrt{n}$. The same remains true for certain short memory processes but this changes drastically  for certain long memory processes. \cite{ghosh:samorodnitsky:2009} showed that for long range dependent  moving average processes the natural boundary for proper large deviation is not the linear normalizing sequence but the speed sequence is linear.  We show examples of long memory processes where the proper large deviation  regime has linear normalizing sequence and a speed sequence is growing slower than the  linear rate.

 The breadth of the class of ID processes makes it very difficult to develop a general theory of large deviations.  LDP is known for some special classes of ID processes. \cite{donsker:varadhan:1985} proved LDP for the empirical measures of a Gaussian process with continuous spectral density. \cite{ghosh:samorodnitsky:2009} proved functional large deviation principle for short and long memory moving averages which gives an almost complete picture of LDP for Gaussian processes.  \cite{deAcosta:1994} proved functional large deviations for the L\'evy processes taking values in some Banach space under certain integrability assumptions, which in the finite dimensional Euclidean space, is identical to \eqref{eq:mgffinite}.

This article is arranged as follows. We discuss the necessary background materials on ID processes in Section \ref{sec:backgr} and give examples of ID processes in Section \ref{sec:examples:IDP}. In Section \ref{sec:mainresults} we describe a class of ID processes driven by a null recurrent Markov chain and state large and moderate deviation principle for the partial sums of these processes. Section \ref{sec:proofs} proves the results. Lastly, in Section \ref{sec:exam} we discuss some examples of this class of processes.

\end{section}

\begin{section}{Notations and Background Materials} \label{sec:backgr}
\subsection{The L\'evy-Khintchine Representation}
The L\'evy-Khintchine representation is a vital tool for the study of ID distributions and processes; see  Theorem 8.1 and the following remarks   in \cite{sato:1999} for  details.
\begin{theorem}\label{thm:levykhintchine } 
	For any $ \mathbb{R}^{ d}$-valued ID random variable $ X$ there exists a unique triplet  $( \Sigma, \nu,\upsilon  )$ such that 
	\begin{equation}\label{eq:charfn} 
 	E \left( e^{i \lambda \cdot X} \right)= \exp  \left\{ - \frac{1}{2}\lambda\cdot \Sigma \lambda  +i \lambda\cdot \upsilon  + \int_{\mathbb{R}^{d}} \left( e^{i \lambda\cdot z}-1- i \lambda\cdot \llbracket z \rrbracket\right) \nu(dz) \right\}   .
\end{equation}
Here $ \Sigma $ is a $ d\times d$ non-negative definite matrix, $ \upsilon \in \mathbb{R}^{ d}$ and $ \nu$ is a measure on $(\mathbb{R}^{d},\mathcal{B}( \mathbb{R}^{d} ))$ satisfying
\[ 
 	\nu(\{   0  \})=0 \ \ \mbox{ and } \ \    \int_{\mathbb{R}^{d}}\left| \llbracket z \rrbracket\right|^{2} \nu( dz )< \infty,
\]
where 
\[ 
 	\llbracket z \rrbracket   :=  \frac{z}{|z| \vee 1}= \left\{  \begin{array}{cl}  z  &\mbox{ if } |z|\le 1 \\ z/|z| & \mbox{ if } |z| >1 \end{array} \right.
\]
\end{theorem}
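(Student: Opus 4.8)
The plan is to establish existence by approximating the law of $X$ with compound Poisson distributions and to derive uniqueness by recovering the triplet from the characteristic exponent. Write $\phi(\lambda)=E(e^{i\lambda\cdot X})$. The first step is the standard fact that an ID characteristic function never vanishes, which follows by writing $\phi=\phi_k^{\,k}$ with $\phi_k$ the characteristic function of $W_1^{(k)}$ and analyzing $|\phi_k|^2=|\phi|^{2/k}$ as $k\to\infty$: a zero of $\phi$ would force the continuous functions $|\phi_k|^2$ to converge to a discontinuous indicator, which is incompatible with $\phi(0)=1$ and continuity. Since $\phi$ is continuous, non-vanishing, and equal to $1$ at the origin, it admits a unique continuous logarithm $\psi=\log\phi$ with $\psi(0)=0$, the distinguished logarithm, and the task reduces to identifying $\psi$.

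The next step is the compound Poisson approximation. Let $\mu_k$ denote the law of $W_1^{(k)}$, so that $\phi_k=\hat\mu_k$ and $\phi=\phi_k^{\,k}$. Because $\mu_k$ concentrates near the origin as $k\to\infty$, one has $\psi(\lambda)=\lim_{k\to\infty} k(\hat\mu_k(\lambda)-1)$ uniformly on compact sets. Introducing the finite measures $\nu_k(dz):=k\,\mu_k(dz)$ on $\mathbb{R}^d\setminus\{0\}$, I would rewrite $k(\hat\mu_k(\lambda)-1)$ as an integral against $\nu_k$ after subtracting the linear compensator $i\lambda\cdot\llbracket z\rrbracket$; the truncation $\llbracket z\rrbracket$ is tailored precisely so that the integrand $e^{i\lambda\cdot z}-1-i\lambda\cdot\llbracket z\rrbracket$ is $O(|\llbracket z\rrbracket|^2)$ near the origin while remaining bounded away from it. The core analytic task is to prove that the measures $|\llbracket z\rrbracket|^2\,\nu_k(dz)$ are tight and converge vaguely to a limit of the form $|\llbracket z\rrbracket|^2\,\nu(dz)$, where $\nu$ satisfies $\nu(\{0\})=0$ and $\int|\llbracket z\rrbracket|^2\,\nu(dz)<\infty$. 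Passing to the limit isolates the jump part $\int(e^{i\lambda\cdot z}-1-i\lambda\cdot\llbracket z\rrbracket)\,\nu(dz)$; the residual mass escaping toward the origin produces the Gaussian form $-\tfrac12\lambda\cdot\Sigma\lambda$ with $\Sigma$ non-negative definite, and the leftover first-order term yields the drift $i\lambda\cdot\upsilon$.

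For uniqueness, suppose two triplets $(\Sigma,\nu,\upsilon)$ and $(\Sigma',\nu',\upsilon')$ produce the same exponent $\psi$. I would recover $\nu$ first by a differencing argument: integrating $\psi$ against a suitable averaging kernel in $\lambda$ annihilates the polynomial (Gaussian and drift) contributions and leaves a transform of $\nu$ that can be inverted, forcing $\nu=\nu'$. Once $\nu=\nu'$, the jump integrals cancel and equality of the remaining quadratic-plus-linear part forces $\Sigma=\Sigma'$ and $\upsilon=\upsilon'$ by matching real and imaginary parts and examining the behavior as $|\lambda|\to\infty$. The main obstacle throughout is the simultaneous control of the three components in the limit: the truncation $\llbracket z\rrbracket$ must be deployed carefully so that the near-origin quadratic mass destined to become the Gaussian part is cleanly separated from the genuine Lévy mass, and establishing the requisite tightness and vague convergence of $|\llbracket z\rrbracket|^2\,\nu_k$ is where the real work lies.
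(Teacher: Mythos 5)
The paper does not prove this statement at all: it is quoted as background (Theorem 8.1 of \cite{sato:1999}) and used as a black box, so there is no internal proof to compare against. Your outline is the standard compound--Poisson approximation proof --- non-vanishing of $\phi$, distinguished logarithm, $\psi=\lim_k k(\hat\mu_k-1)$, tightness of $|\llbracket z\rrbracket|^{2}\nu_k$, and recovery of $(\Sigma,\nu,\upsilon)$ for uniqueness --- which is essentially the argument in the cited reference, and it is correct in outline. Two caveats: in the non-vanishing step, a pointwise limit of continuous functions need not be continuous, so you must invoke L\'evy's continuity theorem (the limit $\lim_k|\phi|^{2/k}=\mathbf{1}_{\{\phi\neq 0\}}$ is continuous at $0$, hence a characteristic function, hence continuous everywhere, forcing $\{\phi\neq 0\}=\mathbb{R}^d$); and the tightness and vague convergence of $|\llbracket z\rrbracket|^{2}\nu_k$, which you correctly identify as the crux, is asserted rather than established --- the usual device is to bound $\int_{|\lambda|\le\epsilon}\mathrm{Re}\,k\bigl(1-\hat\mu_k(\lambda)\bigr)\,d\lambda$ from below by the $\nu_k$-mass weighted by $|\llbracket z\rrbracket|^{2}$, and that estimate would need to be written out for the proof to be complete.
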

  The triplet $( \Sigma, \nu,\upsilon  )$ uniquely determines the distribution of $ X $ and is called the generating triplet of $ X $. From the expression of the characteristic function of $ X$ in \eqref{eq:charfn} it is evident that we can write $ X\stackrel{ d}{ =}X^{ (1)}+X^{ (2)}$, where $ X^{ (1)}$ has a Gaussian distribution with mean $ \upsilon $ and covariance matrix  $ \Sigma $ and $ X^{ (2)}$ is independent of $ X^{ (1)}$. Furthermore, this decomposition of $ X$ is unique. $ X^{ (1)}$ and $ X^{ (2)}$ are called the Gaussian and Poissonian component of $ X$, respectively.  The measure $ \nu$ is called the \emph{L\'evy measure} of $ X$, and it  determines the Poissonian component. We will use the notation $ X \sim ( \Sigma ,\nu,\upsilon ) $ to signify that $ X$ is an ID random variable with generating triplet  $( \Sigma, \nu,\upsilon  )$. 
  
 If $ X$ satisfies 
\[ 
 	E[ \exp ( \lambda \cdot X)]< \infty ,\ \ \ \ \mbox{ for all }\lambda \in \mathbb{R}^{ },   
\]  
 then by Theorem 25.17 in \cite{sato:1999} we can get a representation for the  moment generating function of $ X $:
\begin{equation}\label{eq:mgf}
	 E[ \exp ( \lambda \cdot X )]= \exp  \left\{  \frac{1}{2}\lambda\cdot \Sigma \lambda  + \lambda\cdot \upsilon  + \int_{\mathbb{R}^{d}} \left( e^{ \lambda\cdot z}-1-  \lambda\cdot \llbracket z \rrbracket\right) \nu(dz) \right\}.
\end{equation}
This result is vital for the study of large deviations of ID processes.

\subsection{Infinitely Divisible Process}
A process $\mathbb{X}= (X_{ t}:t\in \mathbb{Z})$ is said to be ID if for every $ k\ge 1$ and $ - \infty<n_{ 1}<\cdots<n_{ k}< \infty$, $ (X_{ n_{ 1}},\ldots,X_{ n_{ k}})$ is ID. This is arguably a large class of stochastic processes since the class of ID distributions is a large one. It is possible to prove a characterization of  ID processes  similar to the L\'evy-Khintchine representation; see \cite{maruyama:1970} and \cite{rosinski:2007}. 
\begin{theorem}\label{thm:idp:levykhint} 
Suppose $ \mathbb{X}=(X_{ n})$ is an ID process. Then there exists a unique triplet $ (\Sigma ,\nu,\upsilon )$ satisfying
\begin{enumerate}[(i)] 
        \item  $ \Sigma :\mathbb{Z}\times\mathbb{Z}\to \mathbb{R}$ is a symmetric non-negative definite function, i.e., for every $ k\ge 1$, $ - \infty<n_{ 1}<\cdots<n_{ k}< \infty $ and $ a_{ 1},\ldots,a_{ k}\in \mathbb{R}$,
        \[ 
 	\Sigma(n_{ 1},n_{ 2})=\Sigma (n_{ 2},n_{ 1})   \ \ \mbox{ and } \ \ \sum_{ i,j=1}^{ k} a_{ i}a_{ j} \Sigma (n_{ i},n_{ j})\ge 0,
\]
\item  $ \nu$ is a measure on $(\mathbb{R}^{ \mathbb{Z}},\mathcal{B} (\mathbb{R}^{ \mathbb{Z}} )) $ satisfying
\[ 
 	\nu\big(\{   x:x_{ n}=0  \}\big)=0 \ \ \ \ \mbox{and} \ \ \ \ \int_{ \mathbb{R}^{ \mathbb{Z}}}    \llbracket x_{ n} \rrbracket^{ 2} \nu(dx) < \infty \ \ \ \ \mbox{ for all } n\in \mathbb{Z},
\]
where for any $ x\in \mathbb{R}^{ \mathbb{Z}}$, $ x_{ n}$ is the projection of $ x$ to the $ n$-th coordinate.
\item $ \upsilon $ is an element of $ \mathbb{R}^{ \mathbb{Z}}$
\end{enumerate}
such that for any $ T=\{   - \infty<n_{ 1}<\cdots<n_{ k}< \infty  \}$, 
\[ (X_{ n_{ 1}},\ldots,X_{ n_{ k}})\sim (\Sigma _{ T},\nu\circ p_{ T}^{ -1},p_{ T}(\upsilon )).\]
Here $ \Sigma _{ T}=(\Sigma (i,j))_{ i,j\in T}$ and $ p_{ T}:\mathbb{R}^{ \mathbb{Z}}\to \mathbb{R}^{ k}$ is the projection mapping $ x$ to $ (x_{ n_{ 1}},\ldots,x_{ n_{k}})$.
\end{theorem}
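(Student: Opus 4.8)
The plan is to reduce this infinite-dimensional statement to a consistency-plus-extension argument resting on the finite-dimensional L\'evy--Khintchine theorem (Theorem~\ref{thm:levykhintchine }). First I would fix, for every finite index set $T=\{n_1<\cdots<n_k\}$, the marginal $(X_{n_1},\ldots,X_{n_k})$, which is ID by hypothesis, and apply Theorem~\ref{thm:levykhintchine } to obtain its \emph{unique} generating triplet, say $(\Sigma^T,\nu^T,\upsilon^T)$. The theorem then amounts to showing that the family $\{(\Sigma^T,\nu^T,\upsilon^T)\}_T$ is consistent under coordinate projections and that a consistent family can be glued into a single triplet $(\Sigma,\nu,\upsilon)$ living on $\mathbb{Z}\times\mathbb{Z}$, $\mathbb{R}^{\mathbb{Z}}$ and $\mathbb{R}^{\mathbb{Z}}$ respectively, with the stated restriction property.

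The consistency step uses the transformation rule for generating triplets under linear maps together with uniqueness. For $T\subseteq T'$ let $q:\mathbb{R}^{T'}\to\mathbb{R}^{T}$ be the canonical projection. Since $q$ carries the $T'$-marginal of $\mathbb{X}$ onto its $T$-marginal and $q$ is linear, the image of $(\Sigma^{T'},\nu^{T'},\upsilon^{T'})$ under $q$ is again a generating triplet of the $T$-marginal; by the uniqueness in Theorem~\ref{thm:levykhintchine } it must coincide with $(\Sigma^T,\nu^T,\upsilon^T)$. Concretely this yields $\Sigma^T=q\,\Sigma^{T'}q^{\top}$ and, after discarding the mass that $\nu^{T'}\circ q^{-1}$ places at the origin (points whose $T$-coordinates all vanish), $\nu^T=(\nu^{T'}\circ q^{-1})|_{\mathbb{R}^{T}\setminus\{0\}}$, while the drift satisfies $\upsilon^T=q\,\upsilon^{T'}+(\text{correction})$, the correction being treated below. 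The Gaussian and drift data are finite-dimensional, so once consistency is in hand they extend immediately: set $\Sigma(i,j):=\Sigma^{\{i,j\}}(i,j)$, which is symmetric and non-negative definite because each $\Sigma^T$ is, and read off $\upsilon$ coordinatewise from the one-dimensional marginals.

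The core of the argument is the construction of the process-level L\'evy measure $\nu$. Because L\'evy measures are in general infinite near the degenerate set, the classical Kolmogorov extension theorem for probability measures does not apply directly. The remedy I would use is that, by the integrability condition $\int\llbracket x_n\rrbracket^{2}\,\nu(dx)<\infty$ and its finite-dimensional analogues, every $\nu^T$ is \emph{finite} on cylinder sets $\{x:p_T(x)\in B\}$ with $\bar B$ bounded away from $0\in\mathbb{R}^{T}$. The consistency above makes $\nu$ well defined and finitely additive on the algebra generated by such bounded-away-from-zero cylinders, and a Carath\'eodory/Kolmogorov-type extension on this algebra produces a unique $\sigma$-finite measure $\nu$ on $(\mathbb{R}^{\mathbb{Z}},\mathcal{B}(\mathbb{R}^{\mathbb{Z}}))$ with $\nu\circ p_T^{-1}=\nu^T$ off the origin. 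The conditions $\nu(\{x:x_n=0\})=0$ and the square-integrability then transfer from the marginals by monotone convergence, and uniqueness of the whole triplet follows from the uniqueness of each $(\Sigma^T,\nu^T,\upsilon^T)$.

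I expect the main obstacle to be two intertwined delicate points. The measure-theoretic one is the extension itself: since $\nu$ is infinite, I cannot quote Kolmogorov for probability measures and must justify countable additivity on the cylinder algebra by hand, exploiting that the restriction away from each neighbourhood of the degenerate set is finite. The second is a bookkeeping issue in reconciling truncations. Theorem~\ref{thm:levykhintchine } centres the jumps with the Euclidean truncation $z/(|z|\vee1)$ on $\mathbb{R}^{k}$, whereas the integrability and the sought drift $\upsilon\in\mathbb{R}^{\mathbb{Z}}$ in the statement are phrased through the coordinatewise truncation $\llbracket x_n\rrbracket$. I would first convert each $\upsilon^T$ to the coordinatewise-truncation drift by adding $\int(\llbracket z\rrbracket_{\mathrm{coord}}-\llbracket z\rrbracket_{\mathbb{R}^{k}})\,\nu^T(dz)$, which is finite because the integrand vanishes near $0$ and is bounded where $\nu^T$ is finite; the payoff is that coordinatewise truncation \emph{commutes} with projection, so in this convention the drift transforms cleanly as $\upsilon^T=q\,\upsilon^{T'}$ and the sequence $\upsilon$ emerges without further correction. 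Verifying that these corrections are finite and mutually consistent under projection is, rather than the extension, the subtle part of the argument.
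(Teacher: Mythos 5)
This is a background result that the paper itself does not prove: Theorem~\ref{thm:idp:levykhint} is quoted from \cite{maruyama:1970} and \cite{rosinski:2007}, so there is no in-paper argument to compare against. Judged on its own terms, your outline is the standard proof from those sources and is essentially correct: extract the unique finite-dimensional triplets from Theorem~\ref{thm:levykhintchine }, prove projective consistency via the transformation rule for triplets under linear maps plus uniqueness, glue $\Sigma$ and $\upsilon$ coordinatewise, and build $\nu$ by an extension argument on sets bounded away from the degenerate path. You have also correctly isolated the two genuinely delicate points: the L\'evy measure is not finite, and the Euclidean truncation $z/(|z|\vee 1)$ of Theorem~\ref{thm:levykhintchine } does not commute with projections, so the drift must be re-centred with the coordinatewise truncation (which is in fact the convention implicitly used in condition (ii) of the statement).

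Two small things you would still need to nail down in a full write-up. First, the ``bounded-away-from-zero cylinders'' do not form an algebra, so rather than a bare Carath\'eodory argument the clean route is: for each $n\in\mathbb{Z}$ and $\epsilon>0$ the restricted family $\{\nu^{T}(\,\cdot\,\cap\{|w_{n}|>\epsilon\})\}_{T\ni n}$ is a consistent family of \emph{finite} measures, so after normalisation Kolmogorov's theorem gives a measure on $\{x:|x_{n}|>\epsilon\}$; these countably many pieces agree on overlaps by uniqueness of the extension and patch into one $\sigma$-finite $\nu$ on $\mathbb{R}^{\mathbb{Z}}\setminus\{x\equiv 0\}$. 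Second, note that the condition $\nu(\{x:x_{n}=0\})=0$ as printed cannot be meant literally for each fixed $n$ (it fails already for an i.i.d.\ ID sequence, whose path-space L\'evy measure is supported on sequences with a single nonzero coordinate); it should be read as $\nu$ assigning no mass to the identically zero path, which your construction delivers automatically. Neither point is a gap in the idea, only in the bookkeeping.
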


From this representation it is evident that for any ID process $ \mathbb{X}$ there exists independent processes $ \mathbb{X}^{ (1)}$ and $ \mathbb{X}^{ (2)}$ such that $ \mathbb{X}^{ (1)}$ is Gaussian, $ \mathbb{X}^{ (2)}$ is determined by $ \nu$ and $ \mathbb{X}=\mathbb{X}^{ (1)}+\mathbb{X}^{ (2)}$. $ \mathbb{X}^{ (2)}$ is the Poissonian part of $ \mathbb{X}$ and $ \nu$ is the path-space L\'evy measure of $ \mathbb{X}$.

\subsection{Infinitely Divisible Random Measure}
We introduce the notion of an \emph{infinitely divisible random measure} or \emph{IDRM}. \cite{rosinski:2007} provides a comprehensive treatment of this topic. Suppose $ ( \Omega,\mathcal{B},P)$ is a probability space. Let $ S$ be a set and  and $ \mathcal{S}_{ 0}$ be a $ \sigma $-ring of measurable subsets of $ S$.
\begin{definition}\label{def:idrm}
 $(   M( A)  : A\in \mathcal{S}_{ 0}) $ is an infinitely divisible random measure on $ ( S,\mathcal{S}_{ 0})$ with control measure $ m$ if 
\begin{enumerate}[(i)] 
        \item  $ M(\emptyset)=0, P-a.s.$
        \item For every $ (   A_{ n} :n\ge 1 )\subset \mathcal{S}_{ 0}$ pairwise disjoint, $ (M( A_{ n}):n\ge 1  )$ forms a sequence of independent random variables and if  $ \cup_{ i}A_{ i}\in \mathcal{S}_{ 0}$ then
        \[ 
 	M\Big( \bigcup_{ i=1}^{ \infty }A_{ i}\Big)= \sum_{ i=1}^{ \infty }M( A_{ i}), \ \ \ \ P -a.s.
\]
	\item For every $ A\in \mathcal{S}_{ 0}$, $ M( A)$ has an infinitely divisible distribution.
	\item For a set $ A\in \mathcal{S}_{ 0}$, $ m( A)=0$ if and only if $ M( A^{ \prime })=0, P-a.s.$ for every $ A^{ \prime }\subset A, A^{ \prime }\in \mathcal{S}_{ 0}.$
\end{enumerate}
\end{definition}
The most well-known example of an IDRM is the \emph{Poisson Random Measure} or the PRM, see \cite{kallenberg1983random}. An IDRM $ M$ is said to be a PRM($ m$) on $ S$ if $ M$ is an IDRM with control measure $ m$ and for any measurable $ A$ satisfying $ m(A)< \infty$ we have $ M(A)\sim Poi(m(A))$. 

The following theorem characterizes the generating triplet of an IDRM. 
\begin{theorem}\label{thm:gentripidrm}
\begin{enumerate}[(a)] 
        \item  Let $ M$ be an IDRM on $ ( S, \mathcal{S}_{ 0})$ such that for every $ A\in \mathcal{S}_{ 0}$,
        \begin{equation}\label{eq:gentrip}
	M( A)\sim \big( \Sigma ( A), \nu( A), \upsilon ( A)\big).
\end{equation}
Then 
\begin{enumerate}[(i)] 
        \item  $ \Sigma:\mathcal{S}_{ 0}\to \mathbb{R}_{ +}$ is a measure.
        \item $ \nu$ is a bi-measure, i.e., $ \forall A\in \mathcal{S}_{ 0},$ $ \nu( A,\cdot)$ is a measure on $ ( \mathbb{R}, \mathcal{B}( \mathbb{R}))$ and $ \forall B\in \mathcal{B}( \mathbb{R}^{ d})$, $ \nu( \cdot, B)$ is a measure on $ ( S,\mathcal{S}_{ 0}).$
        \item $ \upsilon :\mathcal{S}_{ 0}\to \mathbb{R}$ is a signed measure.
\end{enumerate}
\item If $ ( \Sigma,\nu,\upsilon )$ satisfy the conditions given in ($ a$), then there exists a unique (in the sense of finite dimensional distributions) IDRM $ M$ such that \eqref{eq:gentrip} holds.
\item Let $ ( \Sigma,\nu,\upsilon )$ be as in  ($ a$). Define a measure
\begin{equation}
	m( A)= \Sigma ( A) +|\upsilon |( A) +\int_{ \mathbb{R}} \llbracket x \rrbracket^{ 2}\nu( A,dx)\  \  \  \ A\in \mathcal{S}_{ 0},
\end{equation}
where $ |\upsilon |=\upsilon ^{ +}+\upsilon ^{ -}$ is the Jordan decomposition of measure $ \upsilon $ into positive and negative parts. Then $ m( \cdot)$ is a control measure of $ M$.
\end{enumerate} 
\end{theorem}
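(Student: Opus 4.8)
The proof rests entirely on the uniqueness of the generating triplet in the L\'evy--Khintchine representation \eqref{eq:charfn} together with the defining axioms of an IDRM in Definition~\ref{def:idrm}. The organizing principle is that independence plus additivity of $M$ forces additivity of the triplet components: whenever $M(A\cup B)$ splits as a sum of independent ID variables, the characteristic functions multiply, so the exponents in \eqref{eq:charfn} add, and by uniqueness of the triplet the Gaussian variances, the L\'evy measures and the shifts must add separately. The plan is to use this mechanism to prove (a), then bootstrap it to construct and identify $M$ in (b), and finally read off the control measure in (c).

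For part (a) I would first establish \emph{finite} additivity. Given disjoint $A,B\in\mathcal{S}_0$ with $A\cup B\in\mathcal{S}_0$, axiom (ii) gives $M(A\cup B)=M(A)+M(B)$ with $M(A),M(B)$ independent, so the mechanism above yields
\[
\Sigma(A\cup B)=\Sigma(A)+\Sigma(B),\quad \nu(A\cup B,\cdot)=\nu(A,\cdot)+\nu(B,\cdot),\quad \upsilon(A\cup B)=\upsilon(A)+\upsilon(B).
\]
Non-negativity of $\Sigma(A)$ and the fact that $\nu(A,\cdot)$ is a measure on $\mathbb{R}$ are immediate from the L\'evy--Khintchine representation of the single variable $M(A)$; since $\upsilon(A)$ may have either sign, $\upsilon$ is only a signed measure. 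The passage from finite to \emph{countable} additivity again uses axiom (ii): for pairwise disjoint $(A_i)$ with $\bigcup_i A_i\in\mathcal{S}_0$ the partial sums $\sum_{i=1}^n M(A_i)$ converge a.s., hence in distribution, to $M(\bigcup_i A_i)$. Each partial sum is ID with triplet $\big(\sum_{i\le n}\Sigma(A_i),\sum_{i\le n}\nu(A_i,\cdot),\sum_{i\le n}\upsilon(A_i)\big)$, so invoking the continuity theorem for ID laws (weak convergence of ID distributions is equivalent to convergence of the triplets; see \cite{sato:1999}), together with monotonicity of the non-negative quantities $\nu(A_i,B)$ and $\Sigma(A_i)$, identifies the limiting triplet componentwise and gives $\sigma$-additivity of $\Sigma$, of $\nu(\cdot,B)$ for each Borel $B$, and of $\upsilon$. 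This is the main obstacle, since one must control the L\'evy-measure convergence near the origin where the total mass may be infinite; the quantity $\int\llbracket x\rrbracket^2\,\nu(A,dx)$ is the natural object to track there.

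For part (b), existence and uniqueness are handled through finite-dimensional distributions. Given a finite family $A_1,\dots,A_k\in\mathcal{S}_0$, let $B_1,\dots,B_m$ be the disjoint atoms of the algebra they generate. I would declare $M(B_1),\dots,M(B_m)$ independent with $M(B_j)\sim(\Sigma(B_j),\nu(B_j,\cdot),\upsilon(B_j))$, which exist by the L\'evy--Khintchine theorem, and set $M(A_i)=\sum_{j:\,B_j\subset A_i}M(B_j)$. The measure properties from (a) guarantee that these prescriptions are consistent under refinement of the family, so Kolmogorov's extension theorem produces a process $(M(A))_{A\in\mathcal{S}_0}$, and one checks axioms (i)--(iii) of Definition~\ref{def:idrm} directly. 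Uniqueness in finite-dimensional distributions follows from the same decomposition: any IDRM with the prescribed triplet has the joint law of $(M(A_1),\dots,M(A_k))$ pinned down by the independent atom variables, whose laws are fixed by the triplet.

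Finally, part (c) is read off from the structure. If $m(A)=0$ then $\Sigma(A)=0$, $|\upsilon|(A)=0$ and $\int\llbracket x\rrbracket^2\,\nu(A,dx)=0$; by monotonicity of these measures each vanishes on every $A'\subset A$, and since $\llbracket x\rrbracket^2>0$ off the origin while $\nu(A',\{0\})=0$, this forces $\nu(A',\cdot)\equiv 0$. Hence $M(A')$ has the null triplet and $M(A')=0$ a.s., giving one direction of Definition~\ref{def:idrm}(iv). Conversely, if $m(A)>0$ then at least one of the three components of the triplet of $M(A)$ is nontrivial, so $M(A)$ is not a.s. zero and $A'=A$ witnesses the failure of the right-hand condition. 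Thus $m$ satisfies the control-measure criterion.
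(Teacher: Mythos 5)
The paper itself does not prove Theorem \ref{thm:gentripidrm}: it is stated as background on IDRMs with the reader referred to \cite{rosinski:2007} (the construction going back to \cite{rajput1989spectral}), so there is no in-paper argument to compare against and I can only assess your proposal on its own terms. Your strategy --- additivity of the generating triplet forced by independence plus additivity of $M$ and uniqueness in the L\'evy--Khintchine representation, countable additivity via convergence of ID laws and of their triplets, Kolmogorov extension over the atoms of finitely generated algebras for existence, and reading the control measure off the triplet --- is exactly the standard route, and parts (a) and (b) are correct in outline. One caveat on (b): Kolmogorov consistency only delivers a family with the right finite-dimensional laws and finite additivity; axiom (ii) of Definition \ref{def:idrm} demands almost sure convergence of $\sum_i M(A_i)$ to $M(\bigcup_i A_i)$, which you must still extract from the $\sigma$-additivity of the triplet, via convergence in distribution of the independent partial sums together with the L\'evy/It\^o--Nisio equivalence of a.s.\ convergence and convergence in distribution for sums of independent variables. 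That is standard but it is not ``checked directly.''

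The one step that fails as written is the converse direction of part (c). You claim that $m(A)>0$ forces $M(A)$ to be non-degenerate, so that $A'=A$ witnesses the failure of the condition in Definition \ref{def:idrm}(iv). This is false when $\Sigma(A)=0$ and $\nu(A,\cdot)\equiv 0$ but $|\upsilon|(A)>0$ with $\upsilon(A)=\upsilon^{+}(A)-\upsilon^{-}(A)=0$: then the generating triplet of $M(A)$ is null and $M(A)=0$ a.s., even though $m(A)>0$. The fix is to choose $A'$ properly: apply the Hahn decomposition of $\upsilon$ restricted to $A$, writing $A=P_A\cup N_A$ with $P_A,N_A\in\mathcal{S}_{0}$, and take $A'=P_A$ or $A'=N_A$ according to which of $\upsilon^{+}(A)$, $\upsilon^{-}(A)$ is positive; then $\upsilon(A')\neq 0$ and $M(A')$ is a nonzero constant a.s. With this correction, and with the observation that a one-dimensional ID law with nonzero Gaussian variance or nonzero L\'evy measure is non-degenerate, part (c) is complete.
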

Since we prefer to work with $ \sigma $-finite measures we assume that there exists an increasing sequence $ (   S_{ n}  )\subset \mathcal{S}_{ 0}$ such that 
\begin{equation}\label{eq:sigfin}
	S=\bigcup_{ n}S_{ n}.
\end{equation}
We  can extend $ \nu$ to a measure (by an abuse of notation we will call this $ \nu$ as well) on $ ( S\times \mathbb{R}, \mathcal{S}\times \mathcal{B}( \mathbb{R}))$ such that 
\begin{equation}
	\nu(A\times B )=\nu( A,B), \ \ \ \ A\in \mathcal{S}, B\in \mathcal{B}( \mathbb{R}),
\end{equation}
where $ \mathcal{S}:= \sigma ( \mathcal{S}_{ 0})$. Similarly, it is also possible to extend the measurs $ \Sigma$ and $ | \upsilon  |$ on $ ( S, \mathcal{S})$.
Since all the measures are $ \sigma $-finite, we can define measurable  functions
\begin{equation}
	\sigma ^{ 2}( s):= \frac{ d\Sigma }{dm }( s)
\end{equation}
\begin{equation}
	\eta( s):= \frac{ d\upsilon }{dm }( s)
\end{equation}
and a measure kernel $ \rho( x,dx)$ on $ ( S,\mathcal{B}( \mathbb{R}))$ such that
\begin{equation}
	\nu( ds,dx)= \rho( s,dx)m( ds).
\end{equation} 
$ ( \sigma ^{ 2},\rho, \eta)$ is called the \emph{local characteristic} of $ M$ with respect to the control measure $ m$. Intuitively, we can think that 
\[ 
 	M( ds)\sim ( \sigma ^{ 2}( s)m( ds),\rho( s,\cdot)m( ds), \eta( s)m( ds)).   
\]
The following theorem makes this statement more precise.
\begin{theorem}\label{thm:localchar} 
 Under the above notation and condition \eqref{eq:sigfin}, $ \big( \sigma ^{ 2}( s),\rho( s,\cdot),\eta( s)\big)$ is a generating triplet of some ID distribution $ \mu( s,\cdot)$ on $ \mathbb{R},$ $ m$-a.e. Moreover,
 \begin{equation}
	\sigma ^{ 2}( s)+\int_{ \mathbb{R}} \llbracket x \rrbracket^{ 2} \rho( s,dx)+ | \upsilon  |( s)=1, \ \ \ \ m- a.e.
\end{equation}
For every $ B\in \mathcal{B}( \mathbb{R})$, $ s\mapsto \mu( s,B)$ is measurable and thus $ \mu$ is a probability kernel on $ S\times \mathcal{B}( \mathbb{R})$. If
\begin{equation}\label{eq:cumulant}
	C( s,\lambda )=- \frac{ 1}{2 }\sigma ^{ 2}( s)\lambda ^{ 2}+i\eta( s)\lambda +\int_{ \mathbb{R}} ( e^{i\lambda x }-1-i\lambda \llbracket x \rrbracket)\rho( s,dx)
\end{equation}
then $ \int_{ \mathbb{R}}e^{ i\lambda x}\mu( s,dx)=\exp C( s,\lambda )$ and 
\begin{equation}
	E\big[  \exp ( i\lambda M( A)) \big]=\exp \int_{ A}C( s,\lambda )m( ds). 
\end{equation}
\end{theorem}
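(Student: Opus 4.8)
The plan is to reduce every assertion to three ingredients: the Radon--Nikodym theorem, the disintegration $\nu(ds,dx)=\rho(s,dx)\,m(ds)$, and the L\'evy--Khintchine representation applied pointwise in $s$. I would begin with the validity of the triplet and the normalization, since these are the cheapest. By the definition of the control measure in Theorem~\ref{thm:gentripidrm}, the three set functions $\Sigma$, $|\upsilon|$ and $A\mapsto\int_{\mathbb{R}}\llbracket x\rrbracket^{2}\nu(A,dx)$ on $(S,\mathcal{S})$ are each dominated by $m$ and sum to $m$. Hence their Radon--Nikodym derivatives exist and are non-negative and measurable, and the disintegration identifies $\tfrac{d}{dm}\!\int_{\mathbb{R}}\llbracket x\rrbracket^{2}\nu(\cdot,dx)\,(s)=\int_{\mathbb{R}}\llbracket x\rrbracket^{2}\rho(s,dx)$, while $\tfrac{d|\upsilon|}{dm}=|\eta|$. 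Adding the three derivatives and using that they sum $m$-a.e.\ to $\tfrac{dm}{dm}=1$ yields the stated normalization (reading $|\upsilon|(s)$ as $d|\upsilon|/dm$). In particular $\sigma^{2}(s)\ge 0$ and $\int_{\mathbb{R}}\llbracket x\rrbracket^{2}\rho(s,dx)\le 1<\infty$ for $m$-a.e.\ $s$; combined with $\rho(s,\{0\})=0$, which descends $m$-a.e.\ from the property $\nu(A,\{0\})=0$ of the L\'evy bi-measure via $\int_{A}\rho(s,\{0\})\,m(ds)=\nu(A,\{0\})=0$, this shows $(\sigma^{2}(s),\rho(s,\cdot),\eta(s))$ satisfies the hypotheses of the L\'evy--Khintchine representation for $m$-a.e.\ $s$.

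For each such $s$ I would let $\mu(s,\cdot)$ be the unique ID law with generating triplet $(\sigma^{2}(s),\rho(s,\cdot),\eta(s))$ (setting $\mu(s,\cdot)=\delta_{0}$ on the exceptional $m$-null set); its characteristic function is then exactly $\exp C(s,\lambda)$ with $C$ as in \eqref{eq:cumulant}, which is the asserted cumulant identity. The measurability of $s\mapsto\mu(s,B)$ is the delicate step. For fixed $\lambda$, the map $s\mapsto C(s,\lambda)$ is measurable, because $\sigma^{2}$ and $\eta$ are Radon--Nikodym derivatives and $s\mapsto\int_{\mathbb{R}}g(x)\rho(s,dx)$ is measurable for bounded measurable $g$ by the functional monotone class theorem applied to the kernel $\rho$; since $|e^{i\lambda x}-1-i\lambda\llbracket x\rrbracket|\le C(\lambda)\llbracket x\rrbracket^{2}$ and $\int\llbracket x\rrbracket^{2}\rho(s,dx)<\infty$, the full integrand in \eqref{eq:cumulant} is handled likewise. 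Thus $s\mapsto\exp C(s,\lambda)$ is measurable for each $\lambda$. To convert measurability of the characteristic function into measurability of the law, I would invoke that the map sending a probability measure on $\mathbb{R}$ to its characteristic function is a Borel isomorphism onto its image; by continuity of characteristic functions the family $s\mapsto\exp C(s,\cdot)$ is determined by a countable set of $\lambda$, so $s\mapsto\mu(s,\cdot)$ is Borel into the space of probability measures, i.e.\ $s\mapsto\mu(s,B)$ is measurable for every Borel $B$.

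It remains to identify $E[\exp(i\lambda M(A))]$. By \eqref{eq:gentrip}, $M(A)\sim(\Sigma(A),\nu(A,\cdot),\upsilon(A))$, so \eqref{eq:charfn} gives $\log E[\exp(i\lambda M(A))]=-\tfrac12\Sigma(A)\lambda^{2}+i\upsilon(A)\lambda+\int_{\mathbb{R}}(e^{i\lambda x}-1-i\lambda\llbracket x\rrbracket)\,\nu(A,dx)$. Substituting $\Sigma(A)=\int_{A}\sigma^{2}\,dm$, $\upsilon(A)=\int_{A}\eta\,dm$, and $\int_{\mathbb{R}}g\,\nu(A,dx)=\int_{A}\!\int_{\mathbb{R}}g\,\rho(s,dx)\,m(ds)$ for $g(x)=e^{i\lambda x}-1-i\lambda\llbracket x\rrbracket$ — the last by the disintegration and Fubini--Tonelli, justified by the bound $|g|\le C(\lambda)\llbracket x\rrbracket^{2}$ together with $\int\llbracket x\rrbracket^{2}\nu(A,dx)\le m(A)<\infty$ — and collecting terms reproduces $\int_{A}C(s,\lambda)\,m(ds)$. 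I expect the principal obstacle to lie in the measurability step of the second paragraph: guaranteeing that the disintegration $\rho(s,dx)$, which exists only after the $\llbracket x\rrbracket^{2}$-weighting renders $\nu$ effectively $\sigma$-finite in the $x$ direction, can be taken as a genuinely measurable kernel, and that the resulting family $\mu(s,\cdot)$ inherits Borel measurability from its characteristic functions. The Radon--Nikodym and Fubini computations are routine once this kernel is in hand.
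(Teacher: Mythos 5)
The paper does not actually prove this statement: Theorem~\ref{thm:localchar} is presented as background material imported from Rosinski's lecture notes (see the acknowledgement that Section~\ref{sec:backgr} is derived from that course), so there is no in-paper proof to compare against. Judged on its own terms, your argument is correct and is the standard one. The normalization follows exactly as you say from the additive definition of $m$ in Theorem~\ref{thm:gentripidrm}(c), uniqueness of Radon--Nikodym derivatives, the identity $d|\upsilon|/dm=|d\upsilon/dm|$, and the Tonelli identification $\tfrac{d}{dm}\int_{\mathbb{R}}\llbracket x\rrbracket^{2}\nu(\cdot,dx)=\int_{\mathbb{R}}\llbracket x\rrbracket^{2}\rho(\cdot,dx)$; together with $\rho(s,\{0\})=0$ $m$-a.e.\ this validates the triplet pointwise, and the final formula is a direct substitution into \eqref{eq:charfn} justified by the bound $|e^{i\lambda x}-1-i\lambda\llbracket x\rrbracket|\le C(\lambda)\llbracket x\rrbracket^{2}$ and $\int\llbracket x\rrbracket^{2}\nu(A,dx)\le m(A)<\infty$. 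You also correctly isolate the two genuinely delicate points. On the first (existence of the kernel $\rho$): in the theorem as stated this is part of the hypotheses (``under the above notation''), so your proof need not reconstruct it, though you are right that its existence in general rests on a disintegration theorem requiring the $\sigma$-finiteness supplied by \eqref{eq:sigfin}. On the second (measurability of $s\mapsto\mu(s,B)$): your route via the Borel isomorphism between laws and their characteristic functions restricted to a countable dense set of $\lambda$ works; a slightly more elementary alternative that avoids Kuratowski's theorem is to express $\mu(s,(a,b))$ through the L\'evy inversion formula as a limit of integrals of the measurable functions $s\mapsto\exp C(s,\lambda)$ and then extend to all Borel $B$ by a monotone class argument. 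Either way the proof is complete.
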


\subsection{Integration with respect to an IDRM}
Suppose $ M$ is an IDRM on $ ( S,\mathcal{S}_{ 0})$ with control measure $ m$ and local characteristics $ ( \sigma ^{ 2},\rho,\eta)$. We will define integration with respect to $ M$ of a deterministic function $ f:S\to \mathbb{R}$.  As is often the case, we begin by defining the integral for a simple function. By a simple function we understand a finite linear combination of sets from $ \mathcal{S}_{ 0}$, 
\[
 f( s)=\sum_{ j=1}^{ n}a_{ j}I_{ A_{ j}}( s),\ \ \ \ A_{ j}\in \mathcal{S}_{ 0}.
\]
For such a function the integral is defined in an obvious way:
\[ 
 	\int_{ S} f( s)M( ds) = \sum_{ j=1}^{ n} a_{ j}M( A_{ j}).  
\]

In order to extend to integral beyond simple functions we need to define a distance, say $ d_{ M}$, such that if $ d_{ M}( f_{ n},f)\to 0$, then $ \int f_{ n}( s)M( ds)$ converges in probability to some random variable $ X.$ Then we define $ \int f( s)M( ds)=X$.

For a random variable $ X$, let $ \|X\|_{ 0}:= E \big[  | X |\wedge 1 \big]. $ Clearly $ \|X_{ n}-X\|\to 0$ if and only if $ X_{ n} \stackrel{ P}{ \to}X.$ Define for a simple function $ f: S \to \mathbb{R},$
\begin{equation}
	\|f\|_{ M} := \sup_{ \phi\in \Delta} \| \int_{ S}\phi f M( ds)\|_{ 0},
\end{equation}
where 
\begin{equation}
	\Delta := \{   \phi:S\to \mathbb{R} \mbox{ such that }|\phi|\le 1 \mbox{ and has finite range}\}.
\end{equation}
Notice that $ s\mapsto \phi( s)f( s)$ is a simple function and by our definition, $ \|f\|_{ M}$ is well-defined. It is easy to verify that for any simple functions $ f$ and $ g$,
\begin{enumerate}[(i)] 
        \item $  \| f \|_{ M}=0 \Longleftrightarrow f=0 \ \ m-a.e.$
        \item $ \| f+g \|_{ M}\le \| f \|_{ M}+\| g \|_{ M}.$
        \item $ \| \theta f \|_{ M}\le \| f \|_{ M}$, for any $ | \theta  |\le 1.$
\end{enumerate}
These are properties of an $ F$-norm on a vector space. Naturally, $ d_{ M}( f,g):=\| f-g \|_{ M}$ is a metric on the vector space of simple functions.
\begin{definition}\label{defn:integrable} 
We say that a function $ f:S\to \mathbb{R}$ is $ M$-integrable if there exists a sequence $ \{f_{ n}\}$ of simple functions such that
\begin{enumerate}[(a)] 
	\item $ f_{ n}\to f \ \ m$-a.e.
       \item $\lim_{ k,n\to \infty } \| f_{ n}-f_{ k} \|_{ M}=0.$
\end{enumerate}
If (a)-(b) hold, then we define 
\begin{equation}\label{eq:intdefn}
 	\int_{ S}f( s)M( ds)=\lim_{ n\to \infty } \int_{ S}f_{ n}( s)M( ds),   
\end{equation}
where the limit is taken in probability.
\end{definition}

We now state a necessary and sufficient condition for the existence of $ \int f dM.$ 
\begin{theorem}\label{thm:integrable} 
 A measurable function $ f:S\to \mathbb{R}$ is $ M$-integrable if and only if 
 \begin{equation}
	\int_{ S}\Phi_{ M}( s,f( s))m( ds)< \infty, 
\end{equation}
where
\[
	\Phi_{ M}( s,x)=\sigma ^{ 2}( s)x^{ 2}+\int_{ \mathbb{R}} \llbracket xy \rrbracket^{ 2}\rho( s,dy)+\Big| \eta( s)x+\int_{ \mathbb{R}}\big( \llbracket xy \rrbracket-x \llbracket y \rrbracket \big)\rho( s,dy)  \Big|
\]

If $ f$ is $ M$-integrable, then the integral $ \int f dM$ is well-defined by \eqref{eq:intdefn}, i.e., it does not depend on a choice of a sequence $ \{   f_{ n}  \}$. The integral has an infinitely divisible distribution with characteristic function 
\begin{equation}
	E  \Big[\exp\Big(  i \lambda \int_{ S} f dM\Big) \Big]= \exp  \left\{ \int_{ S}C( s,\lambda f( s))m( ds) \right\},
\end{equation}
where the function $ C$ is as defined in \eqref{eq:cumulant}.
\end{theorem}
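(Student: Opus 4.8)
The plan is to follow the classical development of \cite{rajput1989spectral}, building up from simple functions and then reducing the abstract integrability criterion to the concrete one expressed through the local characteristics. First I would verify the characteristic-function formula on simple integrands. If $f=\sum_{j=1}^n a_j I_{A_j}$ with the $A_j\in\mathcal{S}_0$ pairwise disjoint, then $\int f\,dM=\sum_j a_j M(A_j)$ is a sum of independent summands, so by Theorem \ref{thm:localchar},
\[
E\Big[\exp\Big(i\lambda\int_S f\,dM\Big)\Big]=\prod_{j=1}^n\exp\Big\{\int_{A_j}C(s,\lambda a_j)\,m(ds)\Big\}=\exp\Big\{\int_S C(s,\lambda f(s))\,m(ds)\Big\},
\]
using that $C(s,0)=0$ off $\bigcup_j A_j$. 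This establishes the target identity for simple functions and is the object I would eventually pass to the limit.

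The heart of the argument is a pair of two-sided estimates relating the $F$-norm $\|\cdot\|_M$ to the functional $\Psi(f):=\int_S\Phi_M(s,f(s))\,m(ds)$. The point is that $\Phi_M$ is assembled exactly so that its three summands --- the Gaussian term $\sigma^2(s)f(s)^2$, the jump term $\int\llbracket f(s)y\rrbracket^2\rho(s,dy)$, and the drift term --- match the three contributions to $E[|\,\cdot\,|\wedge1]$ of a one-dimensional ID law whose triplet is read off from the local characteristics. I would establish universal constants $c_1,c_2>0$ with inequalities of the form $c_1\big(\Psi(f)\wedge1\big)\le\|f\|_M\le c_2\big(\Psi(f)\vee\sqrt{\Psi(f)}\big)$ for simple $f$, extracting the three components separately by exploiting the supremum over $\phi\in\Delta$ in the definition of $\|f\|_M$: choosing $\phi$ supported on the sign sets of the drift isolates the absolute-value term, while real-part estimates of $1-\mathrm{Re}\,E\exp(i\lambda\int\phi f\,dM)$ control the Gaussian and jump terms. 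These rest on standard L\'evy--Khintchine inequalities for $1-\cos$ and for $E[|Y|\wedge1]$, but assembling them uniformly in $s$ and integrating against $m$ is where the real work lies.

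With these bounds in hand, the equivalence and the limit formula both follow by soft arguments. For sufficiency, assuming $\Psi(f)<\infty$ I would choose simple functions $f_n\to f$ with $\Psi(f-f_n)\to0$ (first truncate $f$ in amplitude and on the exhausting sets $S_n$, then approximate the bounded truncations by simple functions); the upper bound then gives $\|f-f_n\|_M\to0$, so $\{f_n\}$ is Cauchy in $\|\cdot\|_M$ and $f$ is $M$-integrable by Definition \ref{defn:integrable}. For necessity, any $M$-integrable $f$ admits a $\|\cdot\|_M$-Cauchy sequence of simple functions $f_n\to f$ $m$-a.e.; the lower bound shows $\{f_n\}$ is also Cauchy for the modular $\Psi$, and since $\Phi_M(s,\cdot)$ is continuous and $\Psi$ is lower semicontinuous under $m$-a.e.\ limits (Fatou), $\Psi(f)<\infty$. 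Lastly, since $\int f_n\,dM\to\int f\,dM$ in probability, the characteristic functions converge, while the $\Psi$-convergence forces $\int_S C(s,\lambda f_n(s))\,m(ds)\to\int_S C(s,\lambda f(s))\,m(ds)$; passing to the limit in the simple-function identity yields the stated formula.

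The main obstacle I anticipate is the uniform two-sided comparison between $\|\cdot\|_M$ and $\Psi$: the $F$-norm caps its contributions at $1$ through $\|\cdot\|_0=E[|\cdot|\wedge1]$, whereas $\Phi_M$ does not, so the inequalities cannot be linear and one must treat the small- and large-value regimes of $f(s)$ separately and track how the truncation $\llbracket\cdot\rrbracket$ interacts with the drift compensation buried inside $\Phi_M$. Securing these inequalities with constants independent of $s$, and verifying that the drift term genuinely emerges from the optimization over $\phi\in\Delta$, is the technical core on which everything else rests.
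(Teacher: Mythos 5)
The paper states Theorem \ref{thm:integrable} as background material and gives no proof of its own, deferring to \cite{rosinski:2007} and \cite{rajput1989spectral}; your proposal is a correct reconstruction of exactly that standard argument (characteristic-function identity on simple functions, two-sided comparison of the $F$-norm $\|\cdot\|_M$ with the Musielak--Orlicz modular built from $\Phi_M$, then approximation and passage to the limit). You also correctly identify the genuine technical core --- the uniform two-sided estimates and the role of the supremum over $\phi\in\Delta$ in extracting the drift term --- so there is nothing to flag beyond the fact that those estimates would still need to be written out in full.
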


\subsection{Miscellany}
We state a few definitions and notation that we will use  in this paper.
\begin{enumerate}[(1)] 
        \item  A function $ f:\mathbb{R}_{ +}\to \mathbb{R}_{ +}$ is said to be \emph{regularly varying} of index $ \alpha $ or $ RV_{ \alpha }$ if for any \[ 
 	\lim_{ t \rightarrow \infty } \frac{ f(tx)}{f(t) } =x^{ \alpha } \ \ \ \ \mbox{ for all }x>0.   
\]
\item For any $ f\in RV_{ \alpha }$ with $ \alpha >0$ define the inverse function of $ f$ as 
\[ 
 	f^{ \leftarrow }(x):= \inf \{   y\ge 1: f(y)\ge x  \} \in RV_{ 1/\alpha }.
\]
\item $ \mathcal{D}$ will denote the space of all function on $ [ 0,1]$ which are right continuous with left limits.  We will use subscripts to denote the topology on the  space. Specifically, the subscripts $S$, $ Sk$ and $P$  will denote the sup-norm topology, the Skorohod topology and the topology of pointwise convergence.
\item For any $ x\in \mathbb{R}^{ \mathbb{Z}}$ or $ x\in \mathbb{R}^{ \mathbb{N}}$, $ x_{ n}$ will denote the projection of $ x$ to the $ n$-th coordinate. 
\item We will  denote by $ L:\mathbb{R}^{ \mathbb{Z}}\to \mathbb{R}^{ \mathbb{Z}}$ the left shift operator defined by
\[ 
 	L\big( x \big)_{ n}=x_{ n+1} \ \ \ \ \mbox{ for all } x\in \mathbb{R}^{ \mathbb{Z}}.   
\]
\item $ \delta_{ x}(\cdot) $ denotes the Dirac delta measure which puts unit mass at $ x$.
\end{enumerate}
\end{section}

\begin{section}{Examples of Infinitely Divisible Processes}\label{sec:examples:IDP} 
In this section we discuss a range of examples of ID processes and describe their L\'evy measures. We exclude  Gaussian processes from our discussion since they have been extensively studied. All infinitely divisible random variables, processes or random measures that we consider henceforth will be Poissonian and therefore without a Gaussian component.

\subsection{Sequence of IID Infinitely Divisible Random Variables}
If $ (X_{ 1},X_{ 2})$ is ID then $ X_{ 1}$ and $ X_{ 2}$ are also ID. Suppose $ \nu,\nu_{ 1}$ and $ \nu_{ 2}$ are the L\'evy measures of $ (X_{ 1},X_{ 2}),X_{ 1}$ and $ X_{ 2}$, respectively. It is then an easy exercise to check that $ X_{ 1}$ and $ X_{ 2}$ are independent if and only if 
\[ 
 	\nu(A)= \nu_{ 1}\big( \{   x: (x,0)\in A \} \big)   +\nu_{ 2}\big( \{   x: (0,x)\in A \} \big) \ \ \ \ \mbox{ for all } A\in \mathcal{B}\big(\mathbb{R}^{ 2}\big),
\] 
i.e.,  the L\'evy measure of $ (X_{ 1},X_{ 2})$ is supported on the axes of $ \mathbb{R}^{ 2}$. If $ X_{ 1}$ and $ X_{ 2}$ are identically distributed then obviously we will have $ \nu_{ 1}=\nu_{ 2}$. Extending this  we get that if $ \mathbb{X}=(X_{ n}:n\in \mathbb{Z})$ is a sequence of iid infinitely divisible random variables with L\'evy measure $ \nu_{ 1}$ then the process $ \mathbb{X}$ has L\'evy measure $ \nu$ given by
\[ 
 	\nu(A)=\sum_{ i\in \mathbb{Z}} \nu_{ 1}\big( \{   x: xL^{ i}(\mathbf{1}) \in A \} \big)    \ \ \ \ \mbox{ for all }A\in \mathcal{B}\big(\mathbb{R}^{ \mathbb{Z}}\big),
\]
where $ \mathbf{1}\in \mathbb{R}^{ \mathbb{Z}}$ is an element such that $ \mathbf{1}_{ 0}=1$ and $ \mathbf{1}_{ i}=0$ for every $ i\ne 0$.

\subsection{Independent ID Random Variables}\label{subsec:ididp}
If $\mathbb{X}= (X_{ n}:n\in\mathbb{Z} )$ is a sequence of independent infinitely distributed random variables then  the L\'evy measure $ \nu$ of the process $\mathbb{X} $ satisfies
\[ 
 	\nu\big( \{   x\in \mathbb{R}^{ \mathbb{Z}}: x_{ i}\neq 0, x_{ j}\ne 0 \mbox{ for } i\ne j  \} \big)    =0,
\]
i.e., the measure $ \nu$ is supported on sequences $ x\in \mathbb{R}^{ \mathbb{Z}}$ such that at most one coordinate is non-zero.

\subsection{L\'evy Processes}
The most well-known examples of ID processes are the L\'evy processes. A L\'evy process is a stochastic process with stationary and independent increments. A L\'evy process indexed by $ \mathbb{N}:=\{   1,2,3,\ldots  \}$ is the partial sums process of a sequence iid infinitely divisible random variables. It is characterized by its L\'evy measure $ \nu$ which is of the form: 
\[ 
 	\nu( A )     =\sum_{ n=1}^{ \infty}\nu_{ 1}\big( \{   x:x\mathbf{1^{ n}}\in A  \}\big)  \ \ \ \ \mbox{ for all }A \in \mathcal{B}\big(\mathbb{R}^{ \mathbb{N}}\big),
\]
where $ \nu_{ 1}$ is the L\'evy measure of $ X_{ 1}$ and for every $ n\ge 1$, $\mathbf{1^{ n}}\in \mathbb{R}^{ \mathbb{N}} $ is such that 
\[ 
 	(\mathbf{1^{ n}})_{ i}=  \left\{\begin{array}{ll}  1  & \mbox{if }i\le n \\0 & \mbox{otherwise}. \end{array}   \right. 
\]

\subsection{Moving Average Processes}
Suppose $ (   Z_{ n}:n\in \mathbb{Z}  )$ is a sequence of iid infinitely divisible random variables having zero mean, finite variance and L\'evy measure $ \nu_{ 1}$. Let $ \phi=(\phi_{ i}:i\in \mathbb{Z})$ be a doubly infinite sequence satisfying $ \sum_{ i\in \mathbb{Z}}\phi_{ i}^{ 2}< \infty.$  $ \mathbb{X}=(X_{ n})$ is said to be a moving average process with innovations $ (Z_{ n})$ and coefficients $ \phi$ if 
\begin{equation}\label{eq:map} 
 	X_{ n}= \sum_{ i\in \mathbb{Z}} \phi_{ i}Z_{ n-i} \ \ \ \ \mbox{ for all } n\in \mathbb{Z}.   
\end{equation}
A discrete Ornstein-Uhlenbeck process driven by a L\'evy process is an example of a moving average process with ID innovations. Such a process $ \mathbb{X}$ has L\'evy measure $ \nu$ given by
\[ 
 	\nu(A)= \sum_{ i\in \mathbb{Z}} \nu_{ 1} \big( \{   x: xL^{ i}(\phi)\in A  \} \big)    \ \ \ \ \mbox{ for all }A \in \mathcal{B}\big(\mathbb{R}^{ \mathbb{Z}}\big).
\]
It is also easy to describe the moving average process in \eqref{eq:map} as a process obtained from an IDRM. Suppose $ M$ is an IDRM on $ \mathbb{R}^{ \mathbb{Z}}$ with local characteristics $ (0,\nu_{ 1},0)$ and control measure $ m$ defined by
\[ 
 	m=\sum_{ i\in \mathbb{Z}} \delta _{ L^{ i}(\phi)}  , 
\]
where $ \delta $ is the Dirac delta measure.
If $ (X_{ n}^{ \prime })$ is a process defined by
\[ 
 	X_{ n}^{ \prime }=\int_{\mathbb{R}^{ \mathbb{Z}} }   s_{ -n}M(ds) \ \ \ \ \mbox{ for all } n\in \mathbb{Z},
\]
then $ \mathbb{X}\stackrel{ d}{= }(X_{ n}^{ \prime }) $ in the sense of all finite dimensional distributions. This is because
\[ 
 	   X_{ n}^{ \prime }=\int_{\mathbb{R}^{ \mathbb{Z}} }   s_{- n}M(ds)= \sum_{ i\in \mathbb{Z}} \big( L^{ i}(\phi) \big)_{- n}M\big( L^{ i}(\phi) \big)  =\sum_{ i\in \mathbb{Z}} \phi_{ i} M\big( L^{n- i}(\phi) \big) 
\]
and $ \big( M(L^{ n}(\phi)):n\in \mathbb{Z} \big) $ is an iid sequence of infinitely divisible random variables with L\'evy measure $ \nu_{ 1}$.

\subsection{Stationary ID Processes}
If $ (X_{ n})$ is  a stationary process then its L\'evy measure on $ \mathbb{R}^{ \mathbb{Z}}$ is shift invariant:
\[ 
 	\nu\circ L^{ -1}=\nu.   
\]

\subsection{A System of Particles} \label{particles}
Consider a system of particles residing on $ \mathbb{Z}$ such that $ Z_{ i,t}$ denotes the position of particle $ i$ at time $ t$ for  $ i\in \mathbb{N}$ and $ t\in \mathbb{Z}_{ +}:=\mathbb{N}\cup\{   0  \}$. Assume that each particle moves in time independent of one another according to a Markov kernel $ p(\cdot,\cdot)$. Furthermore, suppose $ p$ is the transition kernel of a null-recurrent Markov chain with invariant measure $ \pi$ on $ \mathbb{Z}$. Then PRM($ \pi$) is a stationary distribution for $ \sum_{ i=1}^{ \infty}\delta _{ Z_{ i,t}}$; see \citep[p.404-407]{doob1953stochastic} and \cite{spitzer1977stochastic}.  Therefore  assume that  $ \sum_{ i=1}^{ \infty }\delta _{Z_{ i,0}}$ is a PRM($ \pi$) on $ \mathbb{Z}$.  Denote by $ Z_{ i}=(Z_{ i,t}:t\ge 0)$ the path of the $ i$-th particle. Then $M= \sum_{ i=1}^{ \infty }\delta _{ Z_{ i}}$ is a PRM($ m$) on $ (\mathbb{Z})^{ \mathbb{Z}_{ +}}$ where 
\begin{align}
	&m(s:(s_{ 0},\ldots, s_{ k} )\in A_{ 0}\times \cdots \times A_{ k})=\sum\limits_{ s_{ 0}\in A_{ 0}}\cdots \sum\limits_{ s_{ k}\in A_{ k}}  \pi(s_{ 0}) p(s_{ 0},s_{ 1})\cdots p(s_{k-1},s_{ k})\nonumber \\
	 &\mbox{ for all } k\ge 0,\mbox{  and }A_{ 0},\cdots,A_{ k}\subset \mathbb{Z}.
\end{align}

Suppose $ X_{ t}$ is the number of particles residing at $ 0$ at time $ t$. It is easy to observe that $ X_{ t}=\int_{ (\mathbb{Z}_{ +})^{ \mathbb{N}} } I_{ [s_{ t}=0]}M(ds)$ and  $ X_{ t}\sim Poi(\pi(\{   0  \}))$. Furthermore, $ (X_{ t}:t\ge 0)$ is a stationary ID process with  L\'evy measure $ \nu$ given by
\[ 
 	\nu=\sum_{ T\subset\mathbb{Z}_{ +},T\ne \emptyset} m\big( \{   s:s_{ i}=0, \forall i\in T  \} \big) \delta_{ \mathbf{1^{ T}}},
\]
where for every $ \emptyset\ne T\subset \mathbb{Z}_{ +}$, $ \mathbf{1^{ T}}\in \mathbb{Z}_{ +}$ such that
\[ 
 	   (\mathbf{1^{ T}})_{ i}= \left\{   \begin{array}{ll}  1  &\mbox{if }i\in T \\ 0 & \mbox{otherwise}. \end{array}  \right.
\]

\end{section}

\begin{section}{Long Range Dependent ID Processes} \label{sec:mainresults}
In Subsection \ref{subsec:ididp} we discussed the structure of the path space L\'evy measure of a sequence of independent ID random variables. Continuing on  similar lines it is easy to check that an ID process $ (X_{ n})$ is $ m$-dependent if and only if its L\'evy measure $ \nu$ satisfies
\[ 
 	support(\nu)\subset   \big\{ s\in \mathbb{R}^{ \mathbb{Z}}: s_{ i} s_{ j}=0 \mbox{ whenever }| i-j |>m+1   \big\} 
\]
This means that $ \nu$ must supported on sequences for which at most $ m+1$ consecutive coordinates are nonzero and every other coordinate is zero. Now if the measure $ \nu$ is such that for some $ \epsilon >0$
\begin{equation}\label{eq:pathlevy:lrd} 
 	\nu\big( \{   s:|s_{ i}|>\epsilon  \mbox{ for infinitely many } i\in \mathbb{Z}  \} \big)>0    
\end{equation}
then the process $ (X_{ n})$ is long range dependent. We consider a  class of stationary long range dependent infinitely divisible processes by modeling the path space L\'evy measure $ \nu$ so that it satisfies \eqref{eq:pathlevy:lrd}.


\subsection{The Set up}
\subsubsection{The space $ S$}
Let $(E,\mathcal{E})$ be a measurable space. Suppose $(Z_n)$ is an irreducible  Harris null-recurrent Markov chain  on $(E,\mathcal{E})$ with transition probabilities $ P( x,\cdot)$ and a $ \sigma$-finite invariant measure $ \pi$.  Define a set $S:=E^{\mathbb{Z}}$ and let $\mathcal{S}$ be the cylindrical $ \sigma -$field on $S$. We define a shift invariant measure $ m$ on $(S,\mathcal{S})$ by
\begin{align}\label{eq:contrmeas}
	&m(s:(s_{ n},\ldots, s_{n+ k} )\in A_{ 0}\times \cdots \times A_{ k})=\int\limits_{ A_{ 0}}\cdots \int\limits_{ A_{ k}}  \pi(ds_{ 0}) P(s_{ 0},ds_{ 1})\cdots P(s_{k-1},ds_{ k})&\nonumber \\
	 &\mbox{ for all } n\in \mathbb{Z}, k\ge 0,\mbox{  and }A_{ 0},\cdots,A_{ k}\in \mathcal{E}.&
\end{align}
We  make certain assumptions on $ ( Z_{ n})$:
\begin{enumerate}[S1.] 
        \item  Assume without loss of any generality (see Remark \ref{rem:mchain}) that the Markov chain has an atom $ a$, i.e.,  $ a \in \mathcal{E}$ is such that
\begin{equation}\label{eq:atom}
 	 \pi(a)>0 \ \ \ \mbox{and} \ \ \  P ( x,\cdot) =  P( y,\cdot) =:P_{ a}(\cdot), \ \ \ \ \mbox{ for all } x,y\in a.  
\end{equation}

\item Define
\[
	T_0=0 \mbox{ and }  T_k:=\inf \{n>T_{k-1}: Z_{ n}\in a  \} , 
\]
which is the time taken by $( Z_n)$ to hit $a$ for the $k$-th time. Sometimes we will also use $T$ to denote $T_{1}$. We  assume that  $ ( Z_{ n})$ is $ \alpha $-regular, that is, 
\begin{equation}\label{eq:gamn} 
 	\gamma ( x):=\big( \pi( a)P_{a}[ T>x ]\big)^{ -1}\in RV_{ \alpha }    
\end{equation} 
for some $ 0< \alpha < 1$. As explained in the proof of Theorem 2.3 and equation (5.17) in \cite{chen:1999}  
\begin{equation}
	\frac{ 1}{\pi( a) }\sum_{ k=1}^{ n}  P^{  k}( a,a) \sim \gamma ( n).
\end{equation}
\item We assume that there exists a measurable partition $ (   E_{ n},n\ge 0  )$ of $ E$ such that:
\begin{enumerate}[(a)] 
\item  $ \pi( E_{ n})<\infty$ for every $ n\ge 0$.
\item There exists a monotone function  $\psi\in RV_{\beta}$ with $ \beta >0$ such that 
\begin{equation}\label{eq:psi}
	Q_{n}(\cdot):=P_{\pi_{ n}}\left[ \frac{T}{ \psi(n)}\in \cdot \right] \Longrightarrow Q( \cdot),
\end{equation}
where $ \pi_{ n}( \cdot)=\pi( \cdot\cap E_{ n})/\pi( E_{ n})$. 
\item There exists $ \zeta>-1$ such that
\begin{equation}\label{eq:pi:regvar}
 	\frac{ \pi( E_{ [ rn]})}{\pi( E_{ n}) } \to r^{ \zeta}, \mbox{ for all } r>0. 
\end{equation}
\item There exists  $ \epsilon^{ \prime } >0$, $ c>0$, $ N>1$, $ k>0$ and $ \epsilon>0$ such that for every $ r\ge 1$
\begin{equation}\label{eq:Qassmp} 
 	 \sup_{ n\ge Nr}P_{ \pi_{ n}} \left[ \frac{ T}{\psi(n ) }\le c r^{ -\beta +\epsilon^{ \prime } } \right]< k r^{ -\zeta -1-\epsilon }.   
\end{equation}
\end{enumerate}

\end{enumerate}

\begin{remark}\label{rem:mchain} 
The assumption that the Markov kernel $ P(\cdot,\cdot)$ admits an atom as defined in \eqref{eq:atom} can be made without loss of any generality. Theorem 2.1 in \cite{nummelin:1984} states that if  the  Markov kernel $ P( x,\cdot)$ is Harris recurrent then it satisfies a minorization condition, i.e, there exists a set $ C\in \mathcal{ E}$,  satisfying $ 0<\pi( C)<\infty,$   a probability measure $ \nu$ on $ ( E,\mathcal{E})$ with $ \nu( C)>0$, such that for some $ 0<b\le 1$ and $ n_{ 0}\ge 1,$
\begin{equation}\label{eq:minor}
	P^{ n_{ 0}}( x,A)\ge b I_{ C}( x)\nu( A), \ \ \ \ \mbox{ for all } x\in E, A\in \mathcal{E}.
\end{equation}
By the split-chain technique it is possible to  embed $ ( Z_{ n})$ into a larger probability space on which one can define a sequence of Bernoulli random variables $ ( \tilde Z_{ n})$, such that $ ( Z^{ \prime }_{ n})=( Z_{ n},\tilde Z_{ n})$ forms a Harris recurrent Markov chain on $E^{ \prime}:= E\times \{   0,1  \}$ with an atom $ a^{ \prime }=E\times \{   1  \}$;  see \cite{athreya:ney:1978}, \cite{nummelin:1978} and section 4.4 in \cite{nummelin:1984}.  Furthermore, there is an  invariant measure $  \pi^{ \prime }$ of $ (  Z^{ \prime }_{ n})$ such that the marginal of $  \pi^{ \prime }$ on $ E$ is $ \pi$ and 
\[
	 \pi^{ \prime }( a)=b \pi( C), 
\]
We also have 
\[ 
 	 P^{\prime  k} ( a,a)=b\nu P^{ k-1}( C), \ \ \ \ k\ge 1,   
\]
where $  P^{ \prime k}$ is the $ k$-step transition of $ (  Z^{ \prime }_{ n})$. This in turn implies
\[
	\frac{ 1}{\pi^{ \prime }( a) }\sum_{ k=1}^{ n}  P^{ \prime k}( a,a)= \frac{ 1}{ \pi( C) } \sum_{ k=1}^{ n}  \nu P^{ k-1}(C) \sim \gamma ( n).
\]
\end{remark}

\subsubsection{The IDRM $ M$}

Suppose that $ \mathcal{S}_{ 0}=\{  A\in \mathcal{S}:m( A)<\infty   \}$ and that  $( M(A):A\in \mathcal{S}_{ 0} )$ is an infinitely divisible random measure on $( S,\mathcal{S}_{ 0})$ with control measure $m$ and local characteristics $(0,\rho,0)$. That means for any $ A\in \mathcal{S}_{ 0}$
\begin{equation}
	E\big[ \exp( i \lambda M( A))\big]= \exp\left \{   \int_{ A} \int_{ \mathbb{R}} \left( e^{ i\lambda z}-1-i\lambda \llbracket z \rrbracket \right)\rho( s,dz)m( ds)\right \}.
\end{equation} 
\begin{enumerate}[M1.] 
        \item  We assume that  $ \rho(s,\cdot)$ is a L\'evy measure  on $ ( \mathbb{R}, \mathcal{B}( \mathbb{R}))$, which is independent of $ s\in S$, i.e.,
\[ 
	 \rho( s,\cdot)=\rho( \cdot), \ \ \ \ m-a.s.
\] 
\item We also assume that the L\'evy measure satisfies the condition:
\begin{equation}\label{eq:mgflocal}
	\int_{ |z|>1}e^{ \lambda z} \rho( dz)< \infty,\ \ \ \ \mbox{ for all } \lambda \in \mathbb{R}.
\end{equation}
Then by \eqref{eq:mgf} we get  $ E[  \exp( \lambda M( A))]< \infty $ for all $ \lambda \in \mathbb{R}$ and 
\begin{equation}
	E \big[  \exp( \lambda M( A)) \big]= \exp  \left\{ \int_{ A}\int_{ \mathbb{R}} \left( e^{ \lambda z}-1 - \lambda \llbracket z \rrbracket\right)\rho( dz)m( ds) \right\}
\end{equation}
We  use $ g:\mathbb{R} \to \mathbb{R}$ to denote the function 
\begin{equation}\label{eq:g}
	g( \lambda ):= \int_{ \mathbb{R}} \left( e^{ \lambda z}-1 - \lambda \llbracket z \rrbracket\right)\rho( dz)
\end{equation}
and that  implies
\[ 
 	   E \left[  \exp( \lambda M( A)) \right]=\exp  \left\{ g( \lambda )m( A) \right\} \ \ \ \ \mbox{ for all } \lambda \in \mathbb{R}.
\]
\end{enumerate}

\subsubsection{The process $ (X_{ n})$}

Finally we define a stationary infinitely divisible process  
\begin{equation}\label{eq:idproc}
	X_n=\int_Sf(s_n)M(ds), n\in \mathbb{Z}
\end{equation} where $f:E  \rightarrow \mathbb{R}$ is a measurable function satisfying:
\begin{enumerate}[F1.]
	\item $ f\in L_{ 1}( E,\mathcal{E},\pi)$ with
	\begin{equation}\label{eq:cf}
	c_{ f}:= \int_{ E} f( x)\pi( dx)\neq 0.
\end{equation}
	\item $ f$ satisfies the conditions of Theorem \ref{thm:integrable}.
	\item For every $ \lambda >0$ there exists $ k>0$, $ N>1$ and $ \epsilon >0$ such that  for every  $ r\ge 1$
\begin{equation}\label{eq:fassmp}
 	\sup_{ n\ge Nr} E_{ \pi_{ n}}  \Bigg[ g \Big(   \lambda\sum_{ i=1}^{ T\wedge \psi( n/r)}\big|f( Z_{ i})\big| \Big) \Bigg]   \le kr^{ -\zeta -1-\epsilon } .
\end{equation}
\item $ X_{ 1}$ has mean 0. We will use $ \sigma ^{ 2}$ to denote the variance of $X_{ 1}$.
\end{enumerate}

\subsection{Series Representation}

A series representation of the process $ (X_{ t})$ in \eqref{eq:idproc} gives a representation in the form described in \eqref{eq:vaguedescofX}.  Series representation of ID random variables and methods of simulation using these representations are well known, see \cite{ferguson1972representation}, \cite{bondesson1982simulation}, \cite{rosinski:1990} and \cite{rosinski:2007}. 

Suppose $ \Gamma =(   \Gamma _{ i}:i\ge 1  )$ are the arrival times of a unit rate Poisson process and $ ( \xi_{ i},i\ge 1   )$ is a sequence of i.i.d. random variables independent of $ \Gamma $. Also, suppose $ H:(0.\infty)\times \mathbb{R}\to \mathbb{R}$ be a measurable function such that 
\begin{equation}\label{eq:localchar} 
 	\int_{ 0}^{ \infty} \left| E \llbracket H(s,\xi_{ 1})\rrbracket \right|ds <\infty   \ \ \ \ \mbox{ with } \ \ \ \ a:=\int_{ 0}^{ \infty}  E \llbracket H(s,\xi_{ 1})\rrbracket ds.
\end{equation}
Following \cite{rosinski:1990} we then know that 
\[ 
 	\sum_{ i=1}^{ \infty} H(\Gamma _{ i},\xi_{ i})\sim (0,\nu_{ H},a)   
\]
where 
\[ 
 	\nu_{ H}(B)=\int_{ 0}^{ \infty}P\big[ H(s,\xi_{ 1})\in B\setminus \{   0  \}\big]    ds.
\]

Next, suppose $ N=\sum_{i\ge 1 }\delta _{ \Gamma _{ i},Z_{ i,0}}\sim PRM(Leb\times \pi)$ on $ (0,\infty)\times E$  and $ ( \xi_{ i},i\ge 1   )$ is a sequence of i.i.d. random variables independent of $N $. If $ H$ satisfies \eqref{eq:localchar}, then $ \tilde M=\sum_{ i\ge 1} H(\Gamma _{ i},\xi_{ i})\delta _{ Z_{ i,0}}$ is an IDRM on $ E$ with control measure $ \pi$ and local characteristics $ (0,\nu_{ H},a)$. Suppose at each location $ Z_{ i,t}$ we start an independent Markov chain with kernel $ P$ then 
\[ 
 	M=\sum_{ i\ge 1} H(\Gamma _{ i},\xi_{ i}) \delta _{ (Z_{ i,0},Z_{ i,1},Z_{ i,2},\ldots)}   
\]
is an IDRM on $ S=E^{ \mathbb{Z}}$ with control measure $ m$ in \eqref{eq:contrmeas} and local characteristics $ (0,\nu_{ H},a)$. Furthermore, for a function $ f:E\to \mathbb{R}$ satisfying conditions F1 and F2 
\[ 
 	X_{ t}=\sum_{ i\ge 1}H(\Gamma _{ i},\xi_{ i})f(Z_{ i,t})   
\]
is an ID process without any Gaussian component and with L\'evy measure $ \nu_{ H}$. 

We can view this representation in two ways. Firstly, if we are given a L\'evy measure $ \nu$ then we can find a suitable function $ H$ such that $ \nu=\nu_{ H}$ and then this representation will be helpful for simulation.  Secondly, if we are interested in a system of particles with weights where the weights can be expressed in the form $ W_{ i}=H(\Gamma_{ i},\xi_{ i})$ then we can study functionals of the form $ \sum_{ i\ge 1}W_{ i}f(Z_{ i,t})$ using the structure of ID processes.

\subsection{Large and Moderate Deviation Principle}

Suppose $ (X_{ t})$ is as defined in \eqref{eq:idproc}. We study the step process  $(Y_n)$ 
\begin{equation}\label{stepjump:id} 
Y_n(t)=\frac{1}{a_n}\sum_{i=1}^{[nt]}X_i,t\in [0,1],
\end{equation} 
and its polygonal path counterpart
\begin{equation}\label{e:polyg:id}
\tilde Y_n(t)=\frac{1}{a_n} \sum_{i=1}^{[nt]}X_i
+\frac{1}{a_n}(nt-[nt])X_{[nt]+1},t\in[0,1], 
\end{equation}
where $ (a_{ n})$ is a suitable normalizing sequence. Let $ \mu_{ n}$ be the law of $ Y_{ n}$ and $ \tilde \mu_{ n}$ be the law of $ \tilde Y_{ n}$, in some appropriate function space equipped with the cylindrical $ \sigma -$field. We use $\mathcal{BV}$ to denote the space of all real valued functions  of bounded variation defined on the unit interval $[0,1]$. $ \mathcal{AC}$ will stand for functions which are absolutely continuous on $ [0,1]$. To ensure that the space $\mathcal{BV}$ and $ \mathcal{AC}$ are  measurable sets in the cylindrical $\sigma$-field of all real-valued functions on $[0,1]$, we use  only rational partitions of $[0,1]$ when defining variation. 

\begin{assumption}\label{assmp:integ:g:f} 
 There exists $ 1\le\delta<( 1-\alpha )^{ -1} $  such that the following hold:
 \begin{enumerate}[(1)]
 \item The function $ g$ defined in \eqref{eq:g} satisfies the integrability condition
 \begin{equation}\label{eq:integ}
	\int_{ 0}^{ \infty } \exp \left( -k_{ 0}\bar{ g}( t)^{ \delta   } \right)dt<\infty,
\end{equation}
for some $ k_{ 0}>0$, where $ \bar{ g}( t)=\min\{   |s|:g( s)=t  \}$ and 
\item The function $ f$ satisfies
\begin{equation}\label{eq:Gamma} 
	\sup_{ x\in E}  E_{ x} \Big[  \exp \Big( \lambda  \sum_{ i=1}^{ T}\big| f( Z_{ i})\big| \Big)^{ \delta } \Big]< \infty 
\end{equation}
for some $ \lambda >0$.
\end{enumerate}
\end{assumption}

\begin{theorem}[Moderate Deviation Principle]\label{thm:moderate}
	Consider the setup described above and assume that Assumption \ref{assmp:integ:g:f} holds. Let $   \mu_{ n}  $ be the law of $ Y_{ n}$ in $ \mathcal{BV}$ where 
\[ 
 		a_{ n}:=\frac{\pi( E_{ [ \psi^{ \leftarrow}( n)]})\gamma ( n)\psi^{ \leftarrow}( n)}{c_{ n}}, 
\]
and $ c_{ n}\to \infty$ is such that $ b_{ n}=\pi( E_{[  \psi^{ \leftarrow}( n)]})\psi^{ \leftarrow}( n)/c_{ n}^{ 2}\to \infty$.  Then $ (\mu_{ n})$ satisfies large deviation principle in $ \mathcal{BV}_{ S}$ with speed $ (b_{ n})$ and good rate function
\begin{equation}\label{eq:ratefn:id:mod}
	H_{ m}( \xi)= \left\{  \begin{array}{cc}  \Lambda^{ *}_{ m}( \xi^{ \prime })   & \mbox{ if } \xi\in \mathcal{AC}, \xi( 0)=0\\ \infty & \mbox{ otherwise.} \end{array} \right.
\end{equation}
where for any $ \varphi\in L_{ 1}[ 0,1]$
\begin{equation}\label{eq:ratefnform:id:mod}
	\Lambda^{ *}_{ m }( \varphi)=\sup_{ \psi\in L_{ \infty }[ 0,1]}\left\{ \int_{ 0}^{ 1} \psi( t)\varphi( t) dt -     \int_{0}^{ \infty }r^{ \zeta }E  \Big[ I_{[  r^{ \beta }V \le 1]} \frac{ \sigma ^{ 2}}{2 } \Big(c_{ f}   \int\limits_{ 0}^{1- r^{ \beta }V } \psi( t) U( dt) \Big)^{ 2} \Big]dr \right\}.
\end{equation}
Here  $    U( t):=\inf\{  x: S_{ \alpha }( x)\ge t  \},0\le t\le 1,  $ is the inverse time $ \alpha -$stable subordinator with
\[ 
 	 E  \left\{ \exp \left(- \lambda  S_{ \alpha }( 1) \right) \right\}= \exp  \left\{ - \frac{ \lambda ^{ \alpha }}{\Gamma( 1+\alpha ) } \right\},\forall \lambda \ge 0,   
\]
and $ V $  is independent of $ \{   U( t):0\le t\le 1  \}$ having distribution $ Q( \cdot)$.
\end{theorem}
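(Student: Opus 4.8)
The plan is to follow the G\"artner--Ellis route in function space: compute the limiting normalized log-moment-generating functional of $Y_n$ explicitly from the infinitely divisible structure, show it is smooth and convex, and read off $H_m$ as its convex conjugate. First I would fix a test function $\psi\in L_\infty[0,1]$ and pair it with the increments of $Y_n$. Writing $\sum_{i=1}^{n}\psi(i/n)X_i=\int_S\big(\sum_{i=1}^n\psi(i/n)f(s_i)\big)M(ds)$ and applying Theorem \ref{thm:integrable} together with the moment-generating formula of M1--M2, I obtain the exact identity
\[
\frac{1}{b_n}\log E\Big[\exp\Big(b_n\int_0^1\psi\,dY_n\Big)\Big]
=\frac{1}{b_n}\int_S g\Big(\tfrac{b_n}{a_n}\sum_{i=1}^n\psi(i/n)f(s_i)\Big)m(ds).
\]
Here $b_n/a_n=(c_n\gamma(n))^{-1}\to 0$, so the argument of $g$ is uniformly small. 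Since condition F4 ($EX_1=0$) together with $c_f\neq0$ from (\ref{eq:cf}) forces $g'(0)=0$, the expansion $g(u)=\tfrac12 g''(0)u^2+o(u^2)$ is available, and the moderate scaling collapses the functional to its quadratic part. The remainder must be shown to be $o(b_n)$ uniformly in $s$, which is exactly the purpose of the integrability bound (\ref{eq:integ}) on $\bar g$ and the exponential-moment bound (\ref{eq:Gamma}) on the excursion sums of $|f|$. After this reduction it remains to evaluate the limit of $\tfrac{b_n}{a_n^2}\tfrac{g''(0)}{2}\int_S\big(\sum_i\psi(i/n)f(s_i)\big)^2 m(ds)$, where $b_n/a_n^2=\big(\pi(E_{[\psi^{\leftarrow}(n)]})\gamma(n)^2\psi^{\leftarrow}(n)\big)^{-1}$.

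The evaluation of this quadratic functional is the heart of the proof and the step I expect to be the main obstacle. Here I would use the atom $a$ of S1 to split the path-measure $m$ into excursions between successive visits to $a$, whose lengths carry the regularly varying tail encoded by $\gamma\in RV_\alpha$. Three limiting objects then appear. First, the rescaled number of visits to $a$ before macroscopic time $t$ converges, as a process, to the inverse $\alpha$-stable subordinator $U(t)=\inf\{x:S_\alpha(x)\ge t\}$, producing the $\int_0^{\cdot}\psi\,dU$ factor (the local-time/Mittag--Leffler limit for null-recurrent chains). Second, a Poisson point started in the level set $E_m$ reaches $a$ only after a delay of order $\psi(m)$; the weak limit $Q$ in (\ref{eq:psi}) supplies the independent variable $V$, the macroscopic delay $r^\beta V$ (via $\psi\in RV_\beta$), and hence the horizon truncation to $1-r^\beta V$, with $I_{[r^\beta V\le1]}$ discarding points that never reach $a$ within $[0,1]$. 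Third, summing over starting levels, the regular-variation relation (\ref{eq:pi:regvar}) turns the level sum into $\int_0^\infty r^\zeta(\cdots)\,dr$. Collecting constants identifies the quadratic coefficient as $\sigma^2/2$ and the per-excursion mass of $f$ as $c_f$, yielding
\[
\Lambda_m(\psi)=\int_0^\infty r^\zeta\,E\Big[I_{[r^\beta V\le1]}\,\frac{\sigma^2}{2}\Big(c_f\int_0^{1-r^\beta V}\psi(t)\,U(dt)\Big)^2\Big]\,dr.
\]
The genuinely delicate points are the \emph{uniformity} of the excursion approximation as the starting level $m$ ranges over the partition and the \emph{domination} needed to pass the limit inside $\int_0^\infty dr$; the tail hypotheses (\ref{eq:Qassmp}) and (\ref{eq:fassmp}), both of order $r^{-\zeta-1-\epsilon}$, are designed precisely to furnish an integrable envelope that suppresses the high-level contributions.

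With $\Lambda_m$ in hand I would verify that $\psi\mapsto\Lambda_m(\psi)$ is finite, convex and G\^ateaux differentiable on $L_\infty[0,1]$, and essentially smooth, so that the abstract G\"artner--Ellis (Baldi) theorem for the dual pair $\langle L_\infty,L_1\rangle$ applies and delivers the upper bound on compact sets and the lower bound with rate function the convex conjugate $\Lambda_m^*(\varphi)=\sup_\psi\{\int_0^1\psi\varphi\,dt-\Lambda_m(\psi)\}$. Because $\Lambda_m$ is quadratic in $\psi$, the supremum is finite only when $\varphi$ is the a.e.\ derivative of an absolutely continuous $\xi$ with $\xi(0)=0$ (otherwise a suitable choice of $\psi$ drives $\Lambda_m^*$ to $+\infty$), which gives exactly $H_m$ as in (\ref{eq:ratefn:id:mod})--(\ref{eq:ratefnform:id:mod}).

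Finally I would upgrade from the pointwise topology to $\mathcal{BV}_S$. Since the inclusion of $\mathcal{BV}_S$ into the space with the pointwise topology is continuous, it suffices to prove \emph{exponential tightness} of $(\mu_n)$ at speed $b_n$ in the sup-norm topology and then invoke the standard principle that an LDP in a coarser Hausdorff topology plus exponential tightness in the finer one yields the LDP in the finer one with the same rate; exponential tightness also furnishes the closed-set upper bound that Baldi's theorem leaves open. I would extract exponential tightness from the same moment identity and tail controls, bounding the probability that $Y_n$ escapes a family of equi-absolutely-continuous, uniformly bounded-variation functions. Such families are sup-norm compact by the Arzel\`a--Ascoli/Helly selection principle, which simultaneously shows that the level sets of $H_m$ are compact, so that $H_m$ is good.
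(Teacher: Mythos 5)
Your analytic core matches the paper's: the paper's Lemma \ref{lem:id:limit} computes exactly the limit you describe, by slicing the control measure $m$ over the starting levels $E_{[r\psi^{\leftarrow}(n)]}$, using the excursion decomposition at the atom, the Mittag--Leffler/inverse-subordinator limit $L_n\Rightarrow c_f U$ in $\mathcal{D}_{Sk}$ (its Lemma \ref{lem:MClimit}), the weak limit $Q$ for the rescaled first hitting time to produce the delay $r^{\beta}V$ and the truncation $I_{[r^{\beta}V\le 1]}$, the regular variation \eqref{eq:pi:regvar} to turn the level sum into $\int_0^\infty r^{\zeta}\,dr$, and the tail bounds \eqref{eq:Qassmp}, \eqref{eq:fassmp} together with \eqref{eq:integ}--\eqref{eq:Gamma} to build an integrable envelope (one technical difference: the paper passes the limit inside the level-wise expectation by weak convergence plus uniform integrability, Lemma \ref{lem:unifint}, rather than a uniform-in-$s$ Taylor remainder estimate, and it handles the quadratic collapse $c_n^2 g(\cdot/c_n)\to \sigma^2(\cdot)^2/2$ at that stage). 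Where you genuinely diverge is the LDP machinery: you propose an infinite-dimensional G\"artner--Ellis/Baldi argument for the pairing $\langle L_\infty,L_1\rangle$, whereas the paper runs the finite-dimensional G\"artner--Ellis theorem on the increment vectors $V_n$, applies the contraction principle and the Dawson--G\"artner projective-limit theorem to get an LDP in the topology of pointwise convergence, identifies the projective-limit rate function with $H_m$ (Lemma \ref{lem:ratefn:id}), and only then upgrades to $\mathcal{BV}_S$ by exponential tightness. The projective-limit route buys you a clean lower bound: in each $\mathbb{R}^{|j|}$ the limiting cumulant $\Lambda_j$ is finite and differentiable everywhere, so G\"artner--Ellis applies without any exposed-point analysis. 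Your Baldi route requires the lower bound to be recovered from exposed points of $\Lambda_m^*$ in an infinite-dimensional space, and you have not indicated how to verify that the exposed points are dense enough in the effective domain; this is precisely the step that is delicate (and often fails or needs ad hoc work) in infinite dimensions, and it is the reason the projective-limit detour exists. As written, this is a gap you would need to close, not merely a routine verification.

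A second concrete problem is in your tightness step. You propose to show that $Y_n$ stays, with high exponential probability, in a family of equi-absolutely-continuous functions of uniformly bounded variation. But $Y_n$ is a step process: its paths are never absolutely continuous, so they lie outside every such family with probability one, and no Arzel\`a--Ascoli compact of continuous functions can capture them in sup norm. The paper's fix is exactly the one you omit: prove exponential tightness for the polygonal interpolation $\tilde Y_n$ of \eqref{e:polyg:id} (whose paths are continuous, so the modulus-of-continuity compacts work), and separately prove that $Y_n$ and $\tilde Y_n$ are exponentially equivalent at speed $b_n$ (Lemma \ref{lem:expequiv:id}, which reduces to a tail bound on $a_n^{-1}\max_i|X_i|$ using $a_n/b_n=\gamma(n)\to\infty$). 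You should insert both steps; without the polygonal detour the sup-norm upgrade cannot be carried out in the form you describe.
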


\begin{theorem}[Large Deviation Principle]\label{thm:main}
Assume the setup described above and Assumption \ref{assmp:integ:g:f} such that \eqref{eq:Gamma} holds for all $ \lambda \in \mathbb{R}$. Let $   \mu_{ n}  $ be the law of $ Y_{ n}$ in $ \mathcal{BV}$ where 
\[ 
 		a_{ n}:=\pi( E_{ [ \psi^{ \leftarrow}( n)]})\gamma ( n)\psi^{ \leftarrow}( n). 
\]
Then  $ ( \mu_{ n}  )$ satisfies large deviation principle in $ \mathcal{BV}_{ S}$ with speed $ b_{ n}=\pi( E_{[  \psi^{ \leftarrow}( n)]})\psi^{ \leftarrow}( n)$ and good rate function
\begin{equation}\label{eq:ratefn:id}
	H( \xi)= \left\{  \begin{array}{cc}  \Lambda^{ *}( \xi^{ \prime })   & \mbox{ if } \xi\in \mathcal{AC}, \xi( 0)=0\\ \infty & \mbox{ otherwise.} \end{array} \right.
\end{equation}
where for any $ \varphi\in L_{ 1}[ 0,1]$
\begin{equation}\label{eq:ratefnform:id}
	\Lambda^{ *}( \varphi)=\sup_{ \psi\in L_{ \infty }[ 0,1]}\left\{ \int_{ 0}^{ 1} \psi( t)\varphi( t) dt -     \int_{0}^{ \infty }r^{ \zeta }E  \Big[ I_{[  r^{ \beta }V \le 1]}g \Big(c_{ f}   \int\limits_{ 0}^{1- r^{ \beta }V } \psi( t) U( dt) \Big) \Big]dr \right\}
\end{equation}
with  $ U$ and $ V$  as described in Theorem \ref{thm:moderate}.
\end{theorem}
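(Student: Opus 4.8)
The plan is to obtain the functional LDP from the Gärtner--Ellis theorem applied in the dual pairing of $\mathcal{BV}$ with $L_\infty[0,1]$, after computing the limiting scaled cumulant generating functional whose Fenchel--Legendre transform is $H$. Since $(X_n)$ is built from the IDRM $M$, every linear functional of $Y_n$ is again infinitely divisible with an explicit log-moment generating function. For $\psi\in L_\infty[0,1]$ the Stieltjes pairing is
\[
\int_0^1\psi\,dY_n=\frac{1}{a_n}\sum_{i=1}^n\psi(i/n)X_i=\frac{1}{a_n}\int_S\Big(\sum_{i=1}^n\psi(i/n)f(s_i)\Big)M(ds),
\]
so that, using condition M2 together with Theorem \ref{thm:integrable} and the identity $b_n/a_n=\gamma(n)^{-1}$,
\[
\Lambda_n(\psi):=\frac{1}{b_n}\log E\Big[\exp\Big(b_n\!\int_0^1\psi\,dY_n\Big)\Big]=\frac{1}{b_n}\int_S g\Big(\frac{1}{\gamma(n)}\sum_{i=1}^n\psi(i/n)f(s_i)\Big)m(ds).
\]
The central task is to prove that $\Lambda_n(\psi)\to\Lambda(\psi)$, where $\Lambda(\psi)$ is the double integral appearing in \eqref{eq:ratefnform:id}.

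To analyze this integral I would exploit the regenerative (atom) structure encoded in the control measure $m$, decomposing $m$ over excursions of the driving chain between successive visits to the atom $a$. Excursion by excursion, the inner sum is controlled by the quantities $\sum_{i=1}^T f(Z_i)$, whose exponential moments are finite by \eqref{eq:Gamma}. Because the chain is $\alpha$-regular with heavy-tailed return times, in the scaling fixed by $a_n$ and $b_n$ the dominant contribution comes from a single long excursion overlapping the window $[0,n]$. The stratification $(E_n)$ of S3, the convergence $Q_n\Rightarrow Q$ in \eqref{eq:psi}, and the regular variation \eqref{eq:pi:regvar} of $\pi(E_n)$ are exactly what allow the scaling limit to be taken: by a Darling--Kac type argument the rescaled local time at the atom converges to the inverse $\alpha$-stable subordinator $U$, while the rescaled residual excursion length converges to $V\sim Q$, independent of $U$. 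This produces the inner functional $c_f\int_0^{1-r^\beta V}\psi(t)\,U(dt)$ --- the constant $c_f$ from F1 arising as the ergodic average of $f$ converting local time into an $f$-sum --- the indicator $I_{[r^\beta V\le 1]}$ enforcing that the residual excursion fit inside the window, and the outer weight $r^\zeta\,dr$ coming from \eqref{eq:pi:regvar}.

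To push the nonlinear $g$ through this limit and interchange limit and integral I would establish uniform integrability using Assumption \ref{assmp:integ:g:f}: the growth bound \eqref{eq:integ} on $\bar g$ together with the exponential-moment estimate \eqref{eq:Gamma} --- here assumed for \emph{all} $\lambda\in\mathbb{R}$, which is precisely what upgrades the moderate-deviation conclusion to the full large-deviation statement with $g$ rather than its quadratic part $\tfrac{\sigma^2}{2}(\cdot)^2$ --- dominate the excursion contributions, while \eqref{eq:Qassmp} and F3 control short excursions and large strata, making the tail of the $r$-integral negligible. With $\Lambda_n(\psi)\to\Lambda(\psi)$ secured, $\Lambda$ finite and essentially smooth, the Gärtner--Ellis theorem (in the Dawson--Gärtner projective-limit formulation over finite collections of test functions, then in the weak topology) yields an LDP with good rate function $\Lambda^*$; the representation \eqref{eq:ratefnform:id} shows $\Lambda^*(\varphi)=\infty$ unless $\varphi$ is the a.e.\ derivative of an element of $\mathcal{AC}$ with $\xi(0)=0$, giving the stated form of $H$. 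The upgrade from the weak topology to $\mathcal{BV}_S$ follows from exponential tightness in sup-norm, which I would derive from the same exponential-moment estimates applied to the total variation of $Y_n$.

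The step I expect to be the main obstacle is the scaling-limit identification of the second stage: extracting the inverse stable subordinator $U$ and the residual-time variable $V$ from the null-recurrent excursion structure of $m$, and showing that the nonlinear cumulant $g(\cdot)$ commutes with this limit uniformly enough to integrate against $r^\zeta\,dr$. Everything else --- the mgf computation, the exponential tightness, and the Legendre-transform bookkeeping --- is comparatively routine and runs in parallel with the moderate-deviation case, the only structural change being the appearance of the full $g(\cdot)$ in place of its quadratic approximation, licensed by the stronger form of \eqref{eq:Gamma}.
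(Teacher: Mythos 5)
Your proposal is correct and follows essentially the same route as the paper: an excursion/stratum decomposition of the Lévy-measure integral for the log-mgf, identification of the limit via the Darling--Kac convergence of the rescaled local time to the inverse $\alpha$-stable subordinator together with the convergence $T/\psi(n)\Rightarrow V$ under $\pi_n$, domination arguments from Assumption \ref{assmp:integ:g:f}, \eqref{eq:Qassmp} and \eqref{eq:fassmp}, and then G\"artner--Ellis over finite-dimensional projections, Dawson--G\"artner, exponential tightness in sup-norm, and the Legendre-transform identification of $H$. The only caveat is that your heuristic that ``the dominant contribution comes from a single long excursion'' is not quite how the limit arises (the post-$T$ part accumulates over $\sim\gamma(n)$ returns to the atom, with the first passage contributing only the time-shift $r^{\beta}V$), but this does not affect the validity of the argument you outline.
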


\begin{remark}\label{rem:mainresults} 
 Theorems  \ref{thm:moderate} and  \ref{thm:main}   demonstrate the effect of memory on the large deviation principle of the process. Recall  that for an i.i.d. sequence $ (X_{ n})$ the normalized partial sums $ S_{ n}/n$ satisfy LDP with a linear  speed sequence (Cram\'er's Theorem, see Theorem 2.2.3 in \cite{dembo:zeitouni:1998}). The normalizing sequence and the speed sequence both are linear. This is also the case for L\'evy processes as is proved in \cite{deAcosta:1994}. Theorem \ref{thm:main} shows that in our setup the speed sequence grows at a much slower rate than the normalizing sequence. Here we see that $ (a_{ n})$ is regular varying of index $ (\zeta +1+\alpha \beta )/\beta $, $ (b_{ n})$ is regular varying of index $ (\zeta +1)/\beta $ and $ a_{ n}/b_{ n}=\gamma (n)\to \infty$.  This phenomenon is observed in the moderate deviation regime as well as described in Theorem \ref{thm:moderate}. In the i.i.d. case the normalizing sequence is $ a_{ n}=n/c_{ n}$ and moderate deviation principle holds with speed sequence $ b_{ n}=n/c_{ n}^{ 2}$ where $ c_{ n}\to \infty$ is such that $ n/c_{ n}^{ 2}\to \infty$. A similar phenomenon for moving average processes was demonstrated  in \cite{ghosh:samorodnitsky:2009}. We discuss  two examples in Section \ref{sec:exam}.
\end{remark}

\begin{remark}\label{rem:lrdhistory} 
 Here we see a connection between the recurrence of markov chain $ (Z_{ n})$ and long range dependence of the process $ (X_{ n})$.  The relation is that because of recurrence  the same part of the random measure $ M$ contributes to the variables $ (X_{ n})$ significantly infinitely often.  The parameter $ \alpha $ plays an important role of determining how strong the memory is. The higher the value of $ \alpha $ the sooner are the returns of the Markov chain and that results in a longer  memory in the process $ (X_{ n})$. Furthermore,
  the proofs of the theorems will show that we make a very interesting connection between the weak convergence of functionals  of Markov chain and the large deviations of the process $ (X_{ n})$.
  
  Such models have been considered before but in the context of symmetric $ \alpha $- stable (S$ \alpha $S) processes. A S$ \alpha $S process can be generated from the model we discussed above if the  IDRM $ M$ is a S$ \alpha $S random field.  An IDRM $ M$ is S$ \alpha $S if the local characteristics are $ (0,\rho, 0)$ with $ \rho(dx)=c|x|^{ -\alpha -1}dx$ for some $ c>0$. These processes are not included in the class we consider  since they do not permit exponential moments.  \cite{rosinski1996cms} showed that  the shift operator $ L$  on $ S$ is measure-preserving and conservative and the S$ \alpha $S process thus generated is mixing. \cite{mikosch2000rpc} and later \cite{alparslan2007rpc} discussed the asymptotic properties of ruin properties (as the initial reserve increases to infinity) where the claims process is an S$ \alpha $S process just described. 
\end{remark}

\begin{remark}\label{rem:assmp4-2} 
 We use the Gartner-Ellis type argument to prove LDP for $ Y_{ t}$. Hence a major step in the proof is to obtain a proper estimate of the log-moment generating function of $ Y_{ t}$. As will be evident in Section \ref{sec:proofs}, an important tool in the proof is the idea of split chain technique of dividing a Markov path into i.i.d. segments, see \cite{athreya:ney:1978} and \cite{nummelin:1978}. Assumption \ref{assmp:integ:g:f} ensures that the contribution of each independent segment is not too large. The first such segment though needs a special treatment and conditions \eqref{eq:Qassmp} and \eqref{eq:fassmp} are needed to ensure that the first segment is negligible.
 
  Assumption \ref{assmp:integ:g:f} makes an integrability assumption on the functionals  $ \sum_{ i=1}^{ T}f(Z_{ i})$ and the rate of growth of the function $ g$. The  trade-off is that, the higher the $ \delta $ the more restricted will be the class of functions $ f$ that we can choose. So for distributions which have slowly growing log-moment generating function, i.e., $ \delta $ is small,  we will be able to choose $ f$ from a bigger class of functions. 
\end{remark}

\end{section}

\begin{section}{Proof of Theorems \ref{thm:moderate} and \ref{thm:main}}\label{sec:proofs}

The proof of Theorems \ref{thm:moderate} and \ref{thm:main} is long and is therefore broken up into several steps in form of lemmas below. As the proofs of both the theorems  proceed in a very similar fashion and so we present them together.

\begin{proof}[Proof of Theorems \ref{thm:moderate} and \ref{thm:main}]
 In the arguments used in the proof and the Lemmas henceforth we will take $ a_{ n}=\pi( E_{ [ \psi^{ \leftarrow}( n)]})\gamma ( n)\psi^{ \leftarrow}( n)/c_{ n}$ and  $b_{ n}=\pi( E_{[  \psi^{ \leftarrow}( n)]})\psi^{ \leftarrow}( n)/c_{ n}^{ 2}  $ where in the case of Theorem \ref{thm:main} we will take  $ c_{ n}=1$, and for Theorem \ref{thm:moderate} $ c_{ n}$ will be as described in the statement of that theorem. Let $\mathcal{X}$ be the set of all  $\mathbb{R}^d$-valued functions defined on the unit interval  $[0,1]$ and let $\mathcal{X}^o$ be the subset of $\mathcal{X}$, of  functions which start at the origin. Define $\mathcal{J}$ as the   collection  of  all ordered finite subsets of $(0,1]$ with a partial    order defined by inclusion. For any $j=\{0=t_{ 0}<t_1<\ldots<t_{|j|}\le    1\}$ define the projection $p_j:\mathcal{X}^o\rightarrow    \mathcal{Y}_j$ as  $p_j(\xi)=(\xi(t_1),\ldots,\xi(t_{|j|}))$,      $\xi\in \mathcal{X}^o$. So $\mathcal{Y}_j$ can be identified with the    space $\mathbb{R}^{|j|}$ and the projective limit of    $\mathcal{Y}_j$ over $j\in \mathcal{J}$  can be identified with    $\mathcal{X}^o_{ P}$, that is, $ \mathcal{X}^{ o}$ equipped with the topology of pointwise  convergence. Note that $\mu_n\circ p_j^{-1}$ is the law of     \[Y_n^j=(Y_n(t_1),\ldots,Y_n(t_{|j|})).\] Define the vector $ V_{ n}$ as     
\begin{equation}\label{id:vn}
	V_n:=\big(Y_n(t_1),Y_n(t_2)-Y_n(t_1),\cdots,Y_n(t_{|j|})- Y_n(t_{|j|-1})\big)
\end{equation} 
and observe that for    any $\underline{\lambda}=(\lambda_1,\ldots,\lambda_{|j|})\in    \mathbb{R}^{|j|}$ 
\begin{align}\label{eq:mentioned:earlier}
	 &	\log  E\big(\exp\big[b_n\underline{\lambda}\cdot V_n \big]\big) \nonumber\\
&=\log  E\exp\Big[\frac{b_n}{a_n}\sum_{i=1}^{|j|}\lambda_i  \Big(\sum_{k=[nt_{i-1}]+1}^{[nt_{i}]} X_k \Big)\Big] \nonumber\\
	& =  \int\limits_{ S}\int\limits_{ \mathbb{R}} \left( \exp\Big( \frac{ b_{ n}}{a_{ n} } z \sum_{ i=1}^{ | j |}\lambda _{ i} \sum_{k=[nt_{i-1}]+1}^{[nt_{i}]} f( s_{ k})\Big)-1 - \frac{ b_{ n}}{a_{ n} }\llbracket z \rrbracket \sum_{ i=1}^{ | j |}\lambda _{ i} \sum_{k=[nt_{i-1}]+1}^{[nt_{i}]} f( s_{ k})\right) \rho( dz)m( ds)\nonumber\\
	& =   \int_S   g \left( \frac{ b_{ n}}{a_{ n} }  \sum_{ i=1}^{ | j |}\lambda _{ i} \sum_{k=[nt_{i-1}]+1}^{[nt_{i}]} f( s_{ k}) \right) m(ds)\nonumber.
\end{align}
 By Lemma \ref{lem:id:limit}
\[ 
 	\lim_{ n \rightarrow \infty   } \frac{ 1}{b_{ n} }   \log  E\big(\exp\big[b_n\underline{\lambda}\cdot V_n \big]\big) = \Lambda_{ j}(\underline{\lambda })
\]
with
\[ 
 	\Lambda _{ j}( \underline{ \lambda }):=    \int_{0}^{ \infty }r^{ \zeta }E  \Big[ I_{[  r^{ \beta }V \le 1]}\tilde g \Big(c_{ f}   \sum\limits_{ i=1}^{ | j |}\lambda _{ i} U(  t_{ i}- r^{ \beta}V )-U( t_{ i-1}- r^{ \beta }V )  \Big) \Big]dr,
\]
where $ V $ and $ (U(t ):0\le t\le 1)$ are as described in the statement of the theorems and
\[ 
 	\tilde g(x) =\left\{    \begin{array}{ll} \sigma^{ 2}x^{ 2}/2  & \mbox{for  Theorem \ref{thm:moderate}} \\ g(x) & \mbox{for  Theorem \ref{thm:main}.} \end{array}  \right.   
\] We understand $ U( t)=0$ for all $ t<0$. By the Gartner-Ellis Theorem (see \cite{gartner:1977}, \cite{ellis:1984} and Theorem 2.3.6 in \cite{dembo:zeitouni:1998}) the laws of $(V_n)$ satisfy LDP with speed $(b_n)$ and good  rate function
\begin{equation}\label{eq:fidirtfn}
\Lambda^{*}_{j}(w_1,\ldots,w_{|j|})=\sup_{ \underline{ \lambda }}\Big\{   \sum_{ i=1}^{ | j |}w_{ i}\lambda _{ i}-\Lambda _{ j}( \underline{ \lambda }) \Big \}, 
\end{equation}
where $(w_1,\ldots,w_{|j|})\in \mathbb{R}^{|j|}$. The map $V_n\mapsto Y_n^j$ from $\mathbb{R}^{|j|}$ onto itself is one to one and continuous. Hence the contraction principle tells us that  $(\mu_n\circ p_j^{-1})$ satisfy LDP in $\mathbb{R}^{|j|}$ with   good rate function  
\begin{equation}\label{Hsfindim:id}
H_{j}(y_1,\ldots,y_{|j|}) := \Lambda^{*}_{j}(y_1,y_{ 2}-y_{ 1},\ldots,y_{|j|}-y_{ | j |-1}).
\end{equation}  
By Lemma \ref{lem:expequiv:id}, the same holds for the measures \mbox{$(\tilde \mu_n\circ p_j^{-1})$}. By the Dawson-Gartner Theorem (Theorem 4.6.1  in \cite{dembo:zeitouni:1998}) this implies  that the measures $(\tilde \mu_n)$ satisfy LDP in the space $\mathcal{X}^o$ equipped with the topology of pointwise convergence, with speed $b_n$ and the rate function 
\[ 
 	I(\cdot):=\sup_{ j\in \mathcal{J}} H_{ j}( p_{ j}( \cdot))   
\]
which by Lemma  \ref{lem:ratefn:id} is same as the function $ H_{ m}( \cdot)$ in the case of Theorem \ref{thm:moderate} and $ H(\cdot)$ in the case of Theorem \ref{thm:main}. As $\mathcal{X}^o$ is a closed subset of $\mathcal{X}$, by Lemma 4.1.5 in \cite{dembo:zeitouni:1998} the same holds for $(\tilde\mu_n)$ in $\mathcal{X}$ and the rate function is infinite outside $\mathcal{X}^o$.  Since $\tilde \mu_n(\mathcal{BV})=1$ for all $n\ge 1$ and $ I( \cdot)$ is infinite outside of $\mathcal{BV}$, we conclude that $(\tilde \mu_n)$ satisfy  LDP in $\mathcal{BV}_P$ with the same rate function. The sup-norm topology on $\mathcal{BV}$ is stronger than that of pointwise convergence  and  by Lemma \ref{lem:expttnsC:id}, $(\tilde \mu_n)$ is exponentially tight in $\mathcal{BV}_S$. So by corollary 4.2.6 in 
\cite{dembo:zeitouni:1998}, $(\tilde \mu_n)$ satisfy LDP in $\mathcal{BV}_S$ with speed $b_n$ and good rate function $I(\cdot)$. Finally, applying Lemma \ref{lem:expequiv:id} once again, we conclude that the same is true for the sequence $( \mu_n)$. 
\end{proof}

\begin{lemma}\label{lem:expequiv:id} 
If the conditions of Theorem \ref{thm:moderate} or Theorem   \ref{thm:main} hold then the families $\{\mu_n\}$ and $\{\tilde \mu_n\}$ are exponentially  equivalent in $\mathcal{D}_S$, i.e., for any $ \epsilon >0$
\[ 
 	\lim_{ n \rightarrow \infty } \frac{ 1}{b_{ n} } \log P \Big[  \big\| Y_{ n}-\tilde Y_{ n}\big\| >\epsilon  \Big]   =- \infty.
\]
\end{lemma}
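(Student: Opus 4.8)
The plan is to show that the polygonal correction $\tilde Y_n-Y_n$ is uniformly negligible on the exponential scale $b_n$, and then the exponential equivalence is immediate. From \eqref{stepjump:id} and \eqref{e:polyg:id} the two paths differ only by the linear interpolation term,
\[
\tilde Y_n(t)-Y_n(t)=\frac{1}{a_n}\,(nt-[nt])\,X_{[nt]+1},
\]
so $|\tilde Y_n(t)-Y_n(t)|\le a_n^{-1}|X_{[nt]+1}|$ for every $t$. As $t$ runs over $[0,1)$ the index $[nt]+1$ takes values in $\{1,\dots,n\}$, while at $t=1$ the coefficient $nt-[nt]$ vanishes; hence in the sup-norm of $\mathcal{D}_S$,
\[
\|Y_n-\tilde Y_n\|\le \frac{1}{a_n}\max_{1\le k\le n}|X_k|.
\]
Thus it suffices to prove that $b_n^{-1}\log P\big[\max_{1\le k\le n}|X_k|>\epsilon a_n\big]\to-\infty$.

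Next I would remove the maximum by a union bound and use stationarity of $(X_k)$:
\[
P\Big[\max_{1\le k\le n}|X_k|>\epsilon a_n\Big]\le\sum_{k=1}^n P[|X_k|>\epsilon a_n]=n\,P[|X_1|>\epsilon a_n].
\]
The process has light tails: by assumption M2 together with Theorem \ref{thm:integrable}, $\Lambda(\lambda):=\log E[\exp(\lambda X_1)]=\int_E g(\lambda f(x))\pi(dx)$ is finite for every $\lambda\in\mathbb{R}$, which is precisely property \eqref{eq:mgffinite}. Fixing any $\lambda>0$ and applying the exponential Chebyshev inequality to $X_1$ and to $-X_1$ gives
\[
P[|X_1|>\epsilon a_n]\le e^{-\lambda\epsilon a_n}\big(e^{\Lambda(\lambda)}+e^{\Lambda(-\lambda)}\big).
\]

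Combining the two bounds and dividing by $b_n$, I would write $R_n:=a_n/b_n=\gamma(n)c_n$ and obtain
\[
\frac{1}{b_n}\log P\Big[\max_{1\le k\le n}|X_k|>\epsilon a_n\Big]\le\frac{\log n}{b_n}-\lambda\epsilon R_n+\frac{1}{b_n}\log\big(e^{\Lambda(\lambda)}+e^{\Lambda(-\lambda)}\big).
\]
Since $\gamma\in RV_\alpha$ with $\alpha>0$, $c_n\ge1$ and $b_n\to\infty$, we have $R_n\to\infty$, and $a_n=R_nb_n$ grows at least polynomially, so $\log n=o(a_n)$ and therefore $\log n/b_n=o(R_n)$; the last term tends to $0$. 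Hence the right-hand side equals $R_n\big(o(1)-\lambda\epsilon\big)+o(1)\to-\infty$, as required.

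This is essentially a Chernoff estimate, so I expect no serious obstacle; the only point needing care is that the entropy factor $\log n$ created by the union bound is swamped by the Chernoff exponent $\lambda\epsilon a_n$ after dividing by the speed $b_n$. This is exactly where the long-memory normalization helps: because $a_n/b_n=\gamma(n)c_n\to\infty$ (the normalizing sequence grows strictly faster than the speed, driven by the regularly varying return-time factor $\gamma\in RV_\alpha$), a single fixed $\lambda$ already forces the limit to $-\infty$, and neither a diverging $\lambda$ nor any refined large-deviation input for the marginal $X_1$ is needed.
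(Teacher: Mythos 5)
Your proof is correct and follows essentially the same route as the paper's: the sup-norm bound by $a_n^{-1}\max_{1\le k\le n}|X_k|$, a union bound with stationarity, an exponential Chebyshev estimate with a fixed $\lambda$, and the observation that $a_n/b_n\to\infty$ forces the limit to $-\infty$. The only differences are cosmetic — you spell out why $\log E[e^{\lambda X_1}]$ is finite and why the $\log n$ entropy term is negligible, details the paper leaves implicit.
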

\begin{proof}
Observe that for every $ n\ge 1$
\[ 
 	\big\|Y_{ n}-\tilde Y_{ n}\big\| \le \frac{ 1}{a_{ n} }\max_{ 1\le i\le n}|X_{ i}|.   
\]
Therefore for any $\epsilon >0$ and $\lambda>0$  
\begin{eqnarray*}
&& \limsup_{n\rightarrow \infty} \frac{1}{b_n}  \log P\big( \big\|Y_n-\tilde Y_n\big\|>\epsilon \big) \\
& \le & \limsup_{n\rightarrow \infty} \frac{1}{b_n}\log P\Big( \frac{1}{a_n}\max_{1\le i\le n}|X_i|>\epsilon \Big)\\ 
& \le & \limsup_{n\rightarrow \infty} \frac{1}{b_n} \log \Big( n\, P\big(|X_1|>a_n\epsilon \big) \Big)\\
& \le & \limsup_{n\rightarrow \infty} \frac{1}{b_n} \Big( \log n -a_n\lambda \epsilon  + \log E \big( e^{ \lambda X_{ 1}}\big) + \log E \big( e^{ -\lambda X_{ 1}}\big)\Big)\\
& = & \limsup_{n\rightarrow \infty} \frac{1}{b_n}\Big( -a_n\lambda \delta\Big) \,.
\end{eqnarray*}
By the definition of $ a_{ n}$ and $ b_{ n}$ in Theorem \ref{thm:moderate} or Theorem   \ref{thm:main} we have $a_n/b_n\to\infty$, so the above limit is equal to $-\infty$ and that completes the proof. 
\end{proof}

\begin{lemma}\label{lem:id:limit} 
Assume that the conditions of Theorem \ref{thm:moderate} of Theorem \ref{thm:main} hold. Then for any $ j= \{   0=t_{ 0}<t_{ 1}<\cdots<t_{ | j |}  \}\in \mathcal{J}$ and $ ( \lambda _{ 1},\ldots,\lambda _{ | j |})\in \mathbb{R}^{ | j |}$, 
 \begin{eqnarray*}
	 &  & \lim_{ n \to \infty } \frac{ 1}{ b_{ n}} \int_S   g \left( \frac{ b_{ n}}{a_{ n} }  \sum_{ i=1}^{ | j |}\lambda _{ i} \sum_{k=[nt_{i-1}]+1}^{[nt_{i}]} f( s_{ k}) \right) m(ds)\\
	 & = & \int_{0}^{ \infty }r^{ \zeta }E  \left[ I_{[  r^{ \beta }V \le 1]}\tilde g \Bigg(c_{ f}   \sum\limits_{ i=1}^{ | j |}\lambda _{ i} \Big( U\big(  t_{ i}- r^{ \beta}V\big) -U\big(  t_{ i-1}- r^{ \beta }V \big) \Big) \Bigg) \right]dr< \infty,
\end{eqnarray*}
where $    U( t)=\inf\{  x: S_{ \alpha }( x)\ge t  \},0\le t\le 1,  $ (we understand $ U( t)=0, \forall t< 0$) is the inverse time $ \alpha -$stable subordinator with
\[ 
 	 E  \left\{ \exp \left(- \lambda  S_{ \alpha }( 1) \right) \right\}= \exp  \left\{ - \frac{ \lambda ^{ \alpha }}{\Gamma( 1+\alpha ) } \right\} \ \ \ \ \forall \lambda \ge 0,   
\]
$ V $  is a random variable independent of $ (   U( t):0\le t\le 1  )$ with  distribution $ Q( \cdot)$ and 
\[ 
 	\tilde g(x) =\left\{    \begin{array}{ll} \sigma^{ 2}x^{ 2}/2  & \mbox{for  Theorem \ref{thm:moderate}} \\ g(x) & \mbox{for  Theorem \ref{thm:main}.} \end{array}  \right.   
\]
\end{lemma}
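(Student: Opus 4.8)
The plan is to first reduce the integral against the infinite measure $m$ to an expectation for the driving Markov chain, and then extract the inverse-stable-subordinator limit from the local-time behaviour of the chain. Since the integrand depends only on the coordinates $s_1,\ldots,s_N$ with $N=[nt_{|j|}]$, and the marginal of $m$ on these coordinates is, by \eqref{eq:contrmeas}, exactly $\pi(ds_1)P(s_1,ds_2)\cdots P(s_{N-1},ds_N)$, I would rewrite
\[
\frac{1}{b_n}\int_S g\Big(\tfrac{b_n}{a_n}\sum_i\lambda_i\sum_{k=[nt_{i-1}]+1}^{[nt_i]}f(s_k)\Big)m(ds)=\frac{1}{b_n}E_\pi\Big[g\Big(\tfrac{b_n}{a_n}\sum_i\lambda_iS_i^{(n)}\Big)\Big],
\]
where $S_i^{(n)}=\sum_{k=[nt_{i-1}]+1}^{[nt_i]}f(Z_k)$ and the chain is started from the $\sigma$-finite measure $\pi$. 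Splitting $\pi=\sum_{\ell\ge0}\pi(E_\ell)\,\pi_\ell$ turns this into $\sum_\ell b_n^{-1}\pi(E_\ell)E_{\pi_\ell}[\,\cdots]$, and the natural scaling is to set the starting cell index $\ell=r\,\psi^{\leftarrow}(n)$; then \eqref{eq:pi:regvar} gives $\pi(E_\ell)\sim r^\zeta\pi(E_{[\psi^{\leftarrow}(n)]})$, so that, treating the sum over $\ell$ as a Riemann sum with spacing $\psi^{\leftarrow}(n)$, the prefactor $b_n^{-1}\pi(E_\ell)\psi^{\leftarrow}(n)$ collapses to $r^\zeta c_n^{2}$ by the very definition of $b_n$. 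This is where the exponent $\zeta$ and the factor $c_n^2$ in the statement originate.

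Next I would identify the distributional limit of the scaled block sums under $P_{\pi_\ell}$. Starting in $E_\ell$, the chain first hits the atom $a$ after a time $T$ that, by \eqref{eq:psi}, satisfies $T/\psi(\ell)\Rightarrow V\sim Q$; since $\psi\in RV_\beta$ and $\ell=r\psi^{\leftarrow}(n)$ we have $\psi(\ell)\sim r^\beta n$, so the hitting time measured on the scale $n$ converges to $r^\beta V$. After this first passage the path decomposes, via the split-chain / regeneration structure of Remark \ref{rem:mchain}, into i.i.d.\ excursions between successive visits to $a$; each excursion contributes $c_f/\pi(a)$ to $\sum f(Z_k)$ in the mean, while the number of completed excursions up to time $[nt]$ is the local time at $a$. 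The Darling--Kac theorem for null-recurrent chains (in the $\alpha$-regular regime fixed by \eqref{eq:gamn}) gives that this local time, normalised by $\gamma(n)$, converges to the inverse stable subordinator $U$. Combining these two facts, and accounting for the delay $r^\beta V$ before the excursions begin, yields
\[
\frac{b_n}{a_n}S_i^{(n)}=\frac{1}{\gamma(n)c_n}S_i^{(n)}\ \Longrightarrow\ \frac{c_f}{c_n}\,I_{[r^\beta V\le1]}\big(U(t_i-r^\beta V)-U(t_{i-1}-r^\beta V)\big),
\]
the indicator reflecting that no excursion is completed within the window when the delay $r^\beta V$ exceeds $1$.

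With this joint convergence in hand, continuity of $g$ gives convergence of $g\big(\tfrac{b_n}{a_n}\sum_i\lambda_i S_i^{(n)}\big)$, and it remains to pass to expectations and multiply by $c_n^2$. In the large-deviation case $c_n\equiv1$ this is immediate and produces $g=\tilde g$ in the limit. In the moderate-deviation case $c_n\to\infty$, so the argument of $g$ tends to $0$ and the expansion $g(x)=\tfrac{\sigma^2}{2}x^2+o(x^2)$ (valid because $X_1$ has mean $0$ and variance $\sigma^2$) gives $c_n^2\,g(\tfrac{c_f}{c_n}w)\to\tfrac{\sigma^2}{2}(c_f w)^2=\tilde g(c_f w)$; the factor $c_n^2$ is exactly cancelled, which is why $c_n$ disappears from the final rate function. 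Integrating the resulting $r^\zeta E[I_{[r^\beta V\le1]}\tilde g(\cdots)]$ over $r$ gives the asserted limit.

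The main obstacle is justifying the two interchanges of limit just used: passing $n\to\infty$ inside the $dr$-integration and inside the expectation $E_{\pi_\ell}$. For the former I would exhibit an $r$-integrable dominating function, using \eqref{eq:pi:regvar} for the moderate values of $r$, the hitting-time tail bound \eqref{eq:Qassmp} together with \eqref{eq:fassmp} to kill the deep starts (small $r$, where the delay is large and the first segment must be shown negligible), and regular variation to control large $r$. For the expectation interchange one needs uniform integrability of $c_n^2\,g\big(\tfrac{b_n}{a_n}\sum_i\lambda_iS_i^{(n)}\big)$; this is precisely the role of Assumption \ref{assmp:integ:g:f}, whose condition \eqref{eq:integ} on the growth of $g$ and condition \eqref{eq:Gamma} on the exponential moments of the excursion sums $\sum_{i=1}^T|f(Z_i)|$ (for all $\lambda$ in the large-deviation case) together bound the relevant exponential moments uniformly in $n$. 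Controlling the non-i.i.d.\ first segment --- showing its $f$-contribution is asymptotically negligible while its duration supplies the delay $r^\beta V$ --- is the most delicate point, and is exactly what \eqref{eq:Qassmp} and \eqref{eq:fassmp} are designed to handle.
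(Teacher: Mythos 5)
Your proposal follows essentially the same route as the paper's proof: the paper likewise rewrites the integral as $\int_0^\infty h_n(r)\,dr$ by slicing $m$ according to the starting cell $E_{[r\psi^{\leftarrow}(n)]}$, obtains the pointwise limit of $h_n(r)$ from the first-passage delay $T/n\Rightarrow r^\beta V$ together with the Darling--Kac-type convergence $L_n\Rightarrow c_fU$ (Lemma \ref{lem:MClimit}) and the expansion $c_n^2 g(x/c_n)\to\sigma^2x^2/2$, passes to expectations via the uniform integrability supplied by Assumption \ref{assmp:integ:g:f} (Lemma \ref{lem:unifint}), and dominates the $r$-integral using \eqref{eq:Qassmp}, \eqref{eq:fassmp} and Potter bounds. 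The plan is correct and identifies all the key ingredients in the same roles they play in the paper.
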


\begin{proof}
Define the probability measures $ m_{ l}( \cdot)=m( \cdot\cap S_{ l})/m( S_{ l})$ on $ (S,\mathcal{S})$, where $ S_{ l}=\{   s\in S:s_{ 0}\in E_{ l}  \}$. It is easy to check that  $ m(S_{ l})=\pi(E_{ l}).$ We begin by making a simple observation:
\begin{align*}
	 &    \frac{ 1}{ b_{ n}} \int_S  g \left( \frac{ b_{ n}}{a_{ n} }  \sum_{ i=1}^{ | j |}\lambda _{ i} \sum_{k=[nt_{i-1}]+1}^{[nt_{i}]} f( s_{ k}) \right) m(ds)\\
	 &  = \frac{1}{b_{ n}}\sum_{l=0}^{ \infty } m(S_{l}) \int_{S_{l}}  g \Big( \frac{ 1}{\gamma ( n) c_{ n}} \sum\limits_{ i=1}^{ | j |}\lambda _{ i} \sum\limits_{k=[nt_{i-1}]+1}^{[nt_{i}]} f( s_{ k}) \Big) m_{l}(ds)\\
	  & = \frac{c_{ n}^{ 2}}{\psi ^{ \leftarrow }( n)}\sum_{l=0}^{ \infty }\frac{\pi(S_{l})}{ \pi( S_{[  \psi^{ \leftarrow }( n)]})} \int_{S_{l}}  g \Big( \frac{ 1}{\gamma ( n) c_{ n}} \sum\limits_{ i=1}^{ | j |}\lambda _{ i} \sum\limits_{k=[nt_{i-1}]+1}^{[nt_{i}]} f( s_{ k}) \Big) m_{l}(ds)\\
	  & =  \int_{ 0}^{ \infty }h_{ n}( r)dr,
\end{align*}
where  for every $ r> 0$
\begin{align*} 
 	h_{ n}( r)&=    \frac{c_{ n}^{ 2} \pi(S_{[ r \psi^{ \leftarrow }( n)]})}{ \pi( S_{[  \psi^{ \leftarrow }( n)]})} \int\limits_{S_{[ r\psi^{ \leftarrow }( n)]}}  g \Big( \frac{ 1}{\gamma ( n)c_{ n} } \sum\limits_{ i=1}^{ | j |}\lambda _{ i} \sum\limits_{k=[nt_{i-1}]+1}^{[nt_{i}]} f( s_{ k}) \Big) m_{l}(ds)\\
	&=   \frac{ c_{ n}^{ 2}\pi(S_{[ r \psi^{ \leftarrow }( n)]})}{ \pi( S_{[  \psi^{ \leftarrow }( n)]})} E_{\pi_{[ r\psi^{ \leftarrow }( n)]}} \Bigg[  g \Big( \frac{ 1}{\gamma ( n)c_{ n} } \sum\limits_{ i=1}^{ | j |}\lambda _{ i} \sum\limits_{k=[nt_{i-1}]+1}^{[nt_{i}]} f( Z_{ k}) \Big)\Bigg].
\end{align*}
Under the conditions of Theorem \ref{thm:moderate}, $ c_{ n}$ is as described in the statement of the theorem and  in the case of Theorem \ref{thm:main} $ c_{ n}=1$.
We will prove the lemma in two steps. First we will show that 
\[  \lim_{ n\to \infty }h_{ n}( r)   = h(r):=r^{ \zeta} E  \left[ I_{[  r^{ \beta }V \le 1]}\tilde g \Bigg(c_{ f}   \sum\limits_{ i=1}^{ | j |}\lambda _{ i} \Big( U\big(  t_{ i}- r^{ \beta}V\big) -U\big(  t_{ i-1}- r^{ \beta }V \big) \Big) \Bigg) \right]
\]
and then in the second step we will prove that $ \int_{ 0}^{  \infty}h_{ n}(r)dr\to \int_{ 0}^{  \infty}h(r)dr$.

\textit{Step 1.}
Fix any  $r>0$
\begin{align*}
	 &   \lim_{ n \to \infty }c_{ n}^{ 2} \int_{S_{[r\psi^{ \leftarrow }( n )]}}  g \Big( \frac{ 1}{\gamma ( n)c_{ n} } \sum\limits_{ i=1}^{ | j |}\lambda _{ i} \sum\limits_{k=[nt_{i-1}]+1}^{[nt_{i}]} f( s_{ k}) \Big) m_{[r\psi^{ \leftarrow }(n)]}(ds)\\
	 & =  \lim_{ n \to \infty } c_{ n}^{ 2}E_{\pi_{[r\psi^{ \leftarrow }( n )]}}  \left[ g \Big( \frac{ 1}{\gamma ( n) c_{ n}} \sum\limits_{ i=1}^{ | j |}\lambda _{ i} \sum\limits_{k=[nt_{i-1}]+1}^{[nt_{i}]} f( Z_{ k}) \Big)\right]\\
	  &  = \lim_{ n \to \infty } c_{ n}^{ 2}E_{\pi_{[r\psi^{ \leftarrow }( n )]}}  \left[ g \Big( \frac{ 1}{\gamma ( n)c_{ n} }\sum\limits_{ i=1}^{ | j |}\lambda _{ i} \sum\limits_{k= [nt_{i-1}]\wedge T+1}^{[nt_{i}]\wedge T} f( Z_{ k}) + \frac{ 1}{\gamma ( n)c_{ n} } \sum\limits_{ i=1}^{ | j |}\lambda _{ i} \sum\limits_{k= [nt_{i-1}]\vee T+1}^{[nt_{i}]\vee T} f( Z_{ k})  \Big)\right]
\end{align*}
where for any sequence $ ( x_{ n})$ we understand $ \sum_{ k=i}^{ j}x_{ k}=0$ if $ j<i$. From assumption \eqref{eq:Gamma} it is easy to see that  
\begin{equation}\label{eq:frontpart}
	\frac{ 1}{\gamma ( n) }\sum\limits_{ i=1}^{ | j |}\lambda _{ i} \sum\limits_{k= [nt_{i-1}]\wedge T+1}^{[nt_{i}]\wedge T} f( Z_{ k}) \stackrel{ P_{ x}}{ \longrightarrow} 0 \ \ \ \ \mbox{ for all } x\in E.
\end{equation}
Next we concentrate on the second component.  Define the function $ \Psi:\mathcal{D}\to \mathcal{D}$ as 
\[ 
 	\Psi( h)( t):= \sum_{ i=1}^{ | j |  }\lambda _{ i}\Big( h\big((  t_{ i}-t)\vee 0 \big)-h\big( ( t_{ i-1}-t)\vee 0 \big)\Big)  , \ \ \ \ \mbox{ for all }    h\in \mathcal{D}, t\in [ 0,1].
\]
Note  that 
\[ 
 	\frac{ 1}{\gamma ( n) }\sum_{ i=1}^{ | j |}\lambda _{ i} \sum_{ k= [ nt_{ i-1}]+1}^{ [ nt_{ i}]} f( Z_{ k})  = \Psi( L_{ n})( 0),
\]
where 
\[ 
 	L_{ n}( t):= \frac{ 1}{ \gamma ( n)}\sum_{ k=1}^{ [ nt]}f( Z_{ k}), \ \ \ \ \mbox{ for all }   t\in [ 0,1].
\]
Since $ a$ is an atom $ T$ is independent of $ \sigma ( Z_{ n}:n\ge T)$ and   therefore for any measurable set $ A\subset \mathbb{R}$
\begin{eqnarray*}
	 &  & P_{\pi_{[r\psi^{ \leftarrow }( n )]}} \left[  \frac{ 1}{\gamma (n) }  \sum\limits_{ i=1}^{ | j |}\lambda _{ i} \sum\limits_{ k=[nt_{i-1}]\vee T+1}^{[nt_{i}]\vee T} f( Z_{ k})  \in A\right]\\
	 & = & P  \left[ \Psi( L_{ n})\Big( ( T^{ n}/n)\wedge 1 \Big) \in A\Big| Z_{ 0}\in a \right]
\end{eqnarray*}
where $ T^{ n}$ is a random variable independent of $ ( Z_{ n},n\ge 0)$ such that 
\[ 
	P[ T^{ n}\in \cdot]= P_{\pi_{[r\psi^{ \leftarrow }( n )]}}\big[  T\in \cdot\big].
\] 
Furthermore, if $ h$ is continuous and $ h_{ n}\to h$  in $ \mathcal{D}_{ Sk}$ then $ \big\| h_{ n}-h \big\|\to 0 $ and hence $ \big\| \Psi( h_{ n})-\Psi( h) \big\|\to 0$ which implies $ \Psi( h_{ n})\to \Psi( h)$ in $ \mathcal{D}_{ Sk}$.  By Lemma \ref{lem:MClimit} we know that $ L_{ n} \Longrightarrow c_{ f}U$ in $ \mathcal{D}_{ Sk}$, where  $ (   U( t):0\le t\le 1  )$ is as in the statement of this lemma. Since the $ 
(U( t):0\le t\le 1)$ is almost surely continuous  we can apply the continuous mapping theorem (see Theorem 2.7 in \cite{billingsley1999cpm}) to get 
\begin{equation}\label{eq:onelim}
	\Psi( L_{ n})\Longrightarrow \Psi( c_{ f}U) \ \ \ \ \mbox{ in }\mathcal{D}_{ Sk}.
\end{equation}
 Let $ c_{ n}$ be defined as  $ c_{ n}:=\psi( r\psi^{ \leftarrow }( n))$. Since $ \psi\in RV_{ \beta }$ it follows immediately that 
\[ 
 	\frac{ c_{ n}}{n }   \longrightarrow r^{ \beta } \ \ \ \ \mbox{ as } n \rightarrow \infty .
\]
By assumption \eqref{eq:psi} we get
\begin{equation}\label{eq:limitofT}
	\frac{ T^{ n}}{n }= \frac{ T^{ n}}{c_{ n} } \frac{ c_{ n}}{ n} \Longrightarrow r^{ \beta }V.
\end{equation}
Furthermore, since $ T^{ n}$ is independent of $ \{   Z_{ n},n\ge 0  \}$   we get 
\[ 
 	\Big( ( T^{ n}/n)\wedge 1, \Psi( L_{ n}) \Big)    \Longrightarrow \Big( r^{ \beta }V\wedge 1,\Psi( c_{ f}U)\Big) \ \ \ \ \mbox{  in } [ 0,1]\times \mathcal{D}.
\]
Also,  the map $ \tilde \psi: [ 0,1]\times \mathcal{D}\to \mathbb{R}$ defined by $ \tilde \psi( t,h)=h( t)$ is continuous at $ ( t,h)$ if $ h$ is continuous. Hence another application of the continuous mapping theorem gives us
\[ 
 	\Psi\big( L_{ n}\big)  \Big( ( T^{ n}/n)\wedge 1 \Big)   \Longrightarrow \Psi\big( c_{ f}U\big)\Big(  r^{ \beta }V\wedge 1\Big)
\]
Combining this with \eqref{eq:frontpart}  we obtain
\[ 
 	P_{\pi_{[r\psi^{ \leftarrow }( n )]}} \left[  \frac{ 1}{\gamma ( n) } \sum\limits_{ i=1}^{ | j |}\lambda _{ i} \sum\limits_{k=[nt_{i-1}]+1}^{[nt_{i}]} f( Z_{ k})  \in \cdot \right] \Longrightarrow P \left[  \Psi\big( c_{ f}U\big)\Big(  r^{ \beta }V\wedge 1\Big) \in \cdot\right].
\]
Under the conditions of Theorem \ref{thm:moderate} we know that $ g(x)\sim x^{ 2}\sigma ^{ 2}/2$ as $ x\to 0$ and hence
\begin{equation}\label{eq:gconvergence} 
	P_{\pi_{[r\psi^{ \leftarrow }( n )]}} \left[ c_{ n}^{ 2}g\Big( \frac{ 1}{\gamma ( n) c_{ n}} \sum\limits_{ i=1}^{ | j |}\lambda _{ i} \sum\limits_{k=[nt_{i-1}]+1}^{[nt_{i}]} f( Z_{ k}) \Big) \in \cdot \right] \Longrightarrow P \left[  \frac{ \sigma ^{ 2}}{2 }\Big(\Psi\big( c_{ f}U\big)\Big(  r^{ \beta }V\wedge 1\Big)\Big)^{ 2} \in \cdot\right].
\end{equation}
Under the setup of Theorem \ref{thm:main} we get 
\begin{equation}\label{eq:gconvergence:main} 
	P_{\pi_{[r\psi^{ \leftarrow }( n )]}} \left[ c_{ n}^{ 2}g\Big( \frac{ 1}{\gamma ( n) c_{ n}} \sum\limits_{ i=1}^{ | j |}\lambda _{ i} \sum\limits_{k=[nt_{i-1}]+1}^{[nt_{i}]} f( Z_{ k}) \Big) \in \cdot \right] \Longrightarrow P \left[  g\Big(\Psi\big( c_{ f}U\big)\Big(  r^{ \beta }V\wedge 1\Big)\Big) \in \cdot\right].
\end{equation}
Using \eqref{eq:gconvergence}, \eqref{eq:gconvergence:main} and Lemma \ref{lem:unifint} we get
\begin{align*}
	 &   \lim_{ n \to \infty } c_{ n}^{ 2}\int_{S_{[r\psi^{ \leftarrow }( n )]}}  g \Big( \frac{ 1}{\gamma ( n) c_{ n}} \sum\limits_{ i=1}^{ | j |}\lambda _{ i} \sum\limits_{k=[nt_{i-1}]+1}^{[nt_{i}]} f( s_{ k}) \Big) m_{[r\psi^{ \leftarrow }(n)]}(ds)\\
	 & =  E  \left[ \tilde g \left(\Psi\big( c_{ f}U\big)\Big(  r^{ \beta }V\wedge 1\Big)\right) \right]\\
	 & = E  \left[ I_{[  r^{ \beta }V \le 1]}\tilde g \Bigg(c_{ f}   \sum\limits_{ i=1}^{ | j |}\lambda _{ i} \Big( U\big(  t_{ i}- r^{ \beta}V\big) -U\big(  t_{ i-1}- r^{ \beta }V \big) \Big) \Bigg) \right].
\end{align*}
This combined with Assumption \eqref{eq:pi:regvar}   completes step 1 of the proof.

\textit{Step 2.} We will prove that the functions $ h_{ n}$ are dominated by an integrable function.  For that purpose note that it suffices to consider $ j=\{1     \}$ since 
\[
 	    g \Big( \frac{ 1}{\gamma ( n)c_{ n} } \sum\limits_{ i=1}^{ | j |}\lambda _{ i} \sum\limits_{k=[nt_{i-1}]+1}^{[nt_{i}]} f( Z_{ k}) \Big)
 	  \le  \max  \left\{ g\Big( \frac{ \bar{ \lambda }}{\gamma ( n) c_{ n}}\sum_{ k=1}^{ n}\big| f( Z_{ k}) \big|  \Big), g\Big( \frac{ -\bar{ \lambda }}{\gamma ( n)c_{ n} }\sum_{ k=1}^{ n}\big| f( Z_{ k}) \big|  \Big)  \right\} ,
\]
where $ \bar{ \lambda }=\max_{ 1\le k\le |j|}\big| \lambda _{ k} \big| .$  The following are the ingredients that we will use:
\begin{enumerate}[(i)] 
        \item  By Lemma \ref{lem:unifint} we know that  there exists $ K>0$ and $ N_{ 0}\ge 1$ such that 
\begin{equation}\label{eq:one} 
 	\sup_{ x\in E,n\ge N_{ 0}}E_{ x}\left[g \Big(  \frac{2 \lambda }{\gamma ( n)c_{ n} }\sum_{ k=1}^{ n} \big| f( Z_{ k}) \big|\Big)\right]\le K.	   
\end{equation}
\item Using \eqref{eq:Qassmp} and \eqref{eq:fassmp} we can get constants $ \epsilon ^{ \prime }>0$, $ c>0$,  $ N>1$, $ k_{ 0}>0$ and $ 0<\epsilon <\zeta +1$ such that for every $ r\ge 1$
\begin{equation}\label{eq:two} 
 	\sup_{ n\ge Nr} P_{ \pi_{ n}} \Bigg[  \frac{ T}{\psi( n) } \le cr^{ -\beta +\epsilon ^{ \prime }} \Bigg]    \le k_{ 0} r^{ -\zeta -1-\epsilon }
\end{equation}
and 
\begin{equation}\label{eq:three} 
 	\sup_{ n\ge Nr} E_{ \pi_{ n}}   \Bigg[ g\Big( \lambda \sum_{ i=1}^{ T\wedge \psi( n/r)} \big| f( Z_{ i}) \big| \Big) \Bigg]\le k_{ 0} r^{ -\zeta -1-\epsilon }.
\end{equation}
\item From Potter bounds (Proposition 1.5.6 in \cite{bingham:goldie:teugels:1989}or Theorem 1.5.6 in \cite{resnick:1987}) it is possible to get $ N_{ 1}>0$ and $ k_{ 1}>0$ such that 
\begin{equation}\label{eq:four} 
 	\sup_{ n\ge N_{ 1}}    \frac{ \pi(S_{[ r \psi^{ \leftarrow }( n)]})}{ \pi( S_{[  \psi^{ \leftarrow }( n)]})} \le\left\{\begin{array}{ll} k_{ 1} r^{ ( \zeta-\epsilon)\wedge 0 }& \mbox{ if } r\in ( 0,1)\\ k_{ 1}r^{ \zeta+\epsilon/2 } & \mbox{ if } r\ge 1.\end{array} \right. 
\end{equation}
\end{enumerate}

Combining \eqref{eq:one} and \eqref{eq:four} we get that  $ n\ge N_{ 1}$ implies 
\begin{equation}\label{eq:lostrack} 
 	h_{ n}( r)   \le Kk_{ 1} r^{ ( \zeta-\epsilon )\wedge 0} \ \ \ \ \mbox{ for all } r\in ( 0,1).
\end{equation}
For $ r\ge 1$ we use the convexity of $ g$ to get for  $ \lambda \in \mathbb{R}$
\begin{eqnarray*}
	 &  & E_{ \pi_{ [ r\psi^{ \leftarrow }( n)]}}  \left[ g \Big( \frac{ \lambda }{\gamma ( n) c_{ n}}\sum_{ k=1}^{ n} \big| f( Z_{ k}) \big| \Big)  \right]\\
	 &\le & \frac{ 1}{2 } E_{ \pi_{ [ r\psi^{ \leftarrow }( n)]}}  \left[ g \Big( \frac{2 \lambda }{\gamma ( n) c_{ n}}\sum_{ k=1}^{ T\wedge n} \big| f( Z_{ k}) \big| \Big)  \right]+ \frac{ 1}{2 }E_{ \pi_{ [ r\psi^{ \leftarrow }( n)]}}  \left[ g \Big(  \frac{2 \lambda }{\gamma ( n)c_{ n} }\sum_{ k=T\wedge n+1}^{ n} \big| f( Z_{ k}) \big|\Big)  \right]
\end{eqnarray*}
Then using \eqref{eq:three}  get $ N_{ 2}>0$ such that 
\[
	\sup_{n\ge N_{ 2} }E_{ \pi_{ [ r\psi^{ \leftarrow }( n)]}}  \Bigg[ g \Big( \frac{ 2 \lambda}{ \gamma ( n)c_{ n}} \sum_{ i=1}^{ T\wedge n} \big| f( Z_{ i}) \big| \Big)  \Bigg]\le k_{ 0}r^{ -\zeta -1-\epsilon }.
\]
For the second component observe that 
\[
	  E_{ \pi_{ [ r\psi^{ \leftarrow }( n)]}}  \left[ g \Big(  \frac{2 \lambda }{\gamma ( n)c_{ n} }\sum_{ k=T\wedge n+1}^{ n} \big| f( Z_{ k}) \big|\Big)  \right] 
	  \le 	  E_{ \pi_{ [ r\psi^{ \leftarrow }( n)]}}  \left[ I_{ [ T\le n]}g \Big(  \frac{2 \lambda }{\gamma ( n)c_{ n} }\sum_{ k=T+1}^{ T+n} \big| f( Z_{ k}) \big|\Big)  \right]
\]
and since $ a$ is an atom
\[
	  E_{ \pi_{ [ r\psi^{ \leftarrow }( n)]}}  \left[ I_{ [ T\le n]}g \Big(  \frac{2 \lambda }{\gamma ( n) c_{ n}}\sum_{ k=T+1}^{ T+n} \big| f( Z_{ k}) \big|\Big)  \right]
	  = P_{ \pi_{ [ r\psi^{ \leftarrow }( n)]}}  \left[  T\le n \right]E_{ a}\left[g \Big(  \frac{2 \lambda }{\gamma ( n)c_{ n} }\sum_{ k=1}^{ n} \big| f( Z_{ k}) \big|\Big)  \right]
\]
By another application of Potter bounds we can get  $ N_{ 3}>0$ such that
\[ 
 	\sup_{ n\ge N_{ 3}}\frac{ n}{ \psi\big( r\psi^{ \leftarrow }( n) \big)  }   \le c r^{ -\beta +\epsilon ^{ \prime }}
\] 
and this together with \eqref{eq:two} gives us that there exists $ N_{ 4}>0$ such that 
\begin{eqnarray*} 
 	\sup_{ n\ge N_{ 4}} P_{ \pi_{ [ r\psi^{ \leftarrow }( n)]}}\big[  T\le n \big] &  = &\sup_{ n\ge N_{ 4}} P_{ \pi_{ [ r\psi^{ \leftarrow }( n)]}}\Bigg[  \frac{ T}{\psi\big( r\psi^{ \leftarrow }( n) \big)  }\le \frac{ n}{\psi\big( r\psi^{ \leftarrow }( n) \big)  } \Bigg] \\
	& \le & k_{ 0}r^{ -\zeta -1-\epsilon}.
\end{eqnarray*}
Combining this with \eqref{eq:one} and \eqref{eq:lostrack}  we get  that $ h_{ n}( r)\le \tilde h( r)$ for all $ n\ge \max_{ 0\le i\le 4}N_{ i}$ and $ r>0$, where 
\[ 
 	\tilde h( r)= \left\{ \begin{array}{ll}  Kk_{ 1}r^{ ( \zeta-\epsilon )\wedge 0}  & \mbox{ if }r\in ( 0,1) \\ k_{ 1}k_{ 0}r^{  -1-\epsilon/2 }+Kk_{ 0}k_{ 1}r^{ -1-\epsilon /2} & \mbox{ if } r\ge 1\end{array}     \right.
\]
Observe that $\tilde h$ is integrable  because $ \zeta -\epsilon >-1$. Finally, we apply the dominated convergence theorem to get
\[ 
 	\lim_{ n \rightarrow \infty  } \int_{ 0}^{ \infty} h_{ n}(r)dr =\int_{ 0}^{ \infty}h(r)dr   
\]
and that completes the proof of the lemma.
\end{proof}

\begin{lemma}\label{lem:expttnsC:id}
If the conditions of  Theorem \ref{thm:moderate} or Theorem   \ref{thm:main} hold  then the family $\{\tilde \mu_n\}$ is exponentially  tight in $\mathcal{D}_S$, i.e., for every $p>0$ there exists a compact  $K_p\subset \mathcal{D}_S$, such that 
\[
\lim_{p\rightarrow \infty}\limsup_{n\rightarrow \infty }\frac{1}{b_n}\log \tilde \mu_n(K_p^c)=-\infty.
\]  
\end{lemma}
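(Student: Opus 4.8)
The plan is to use that each polygonal path $\tilde Y_n$ is continuous with $\tilde Y_n(0)=0$, so $\{\tilde\mu_n\}$ lives on the closed subspace $C[0,1]\subset\mathcal{D}_S$, on which the sup-norm topology agrees with that of $\mathcal{D}_S$. By Arzel\`a--Ascoli a family of such paths is relatively compact iff it is equicontinuous, so it suffices to produce compact sets of the form
\[
K_p=\Big\{\xi\in C[0,1]:\xi(0)=0,\ \delta_\ell(\xi)\le\eta_{\ell,p}\ \text{for all }\ell\ge1\Big\},
\]
where $\delta_\ell(\xi):=\max_{0\le k<2^\ell}\big|\xi((k+1)2^{-\ell})-\xi(k2^{-\ell})\big|$ and $(\eta_{\ell,p})_\ell$ is summable with $\eta_{\ell,p}\to0$ as $\ell\to\infty$; the standard bound $w(\xi,2^{-m})\le2\sum_{\ell\ge m}\delta_\ell(\xi)$ (see \cite{billingsley1999cpm}) shows each such $K_p$ is compact. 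Thus the task reduces to estimating $P[\delta_\ell(\tilde Y_n)>\eta_{\ell,p}]$ at speed $b_n$, and I would take $\eta_{\ell,p}:=p\,2^{-\ell\kappa/2}$ for a small $\kappa>0$ fixed below.

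I would split the dyadic levels at $\ell_n:=\lfloor\log_2 n\rfloor$. For the fine levels $\ell>\ell_n$ the interval $2^{-\ell}<1/n$ meets at most two linear pieces of the polygonal path, whose slopes are $\tfrac{n}{a_n}X_k$, so $\delta_\ell(\tilde Y_n)\le\tfrac{2n2^{-\ell}}{a_n}\max_{1\le i\le n}|X_i|$. Hence every fine-level constraint is implied by the single event $\{\tfrac1{a_n}\max_i|X_i|\le\tfrac p2 n^{-\kappa/2}\}$, whose complement has $b_n$-rate $-\infty$ exactly as in Lemma \ref{lem:expequiv:id}, provided $\kappa<2\alpha$ so that $\gamma(n)n^{-\kappa/2}\to\infty$. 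This disposes of all but finitely many scales in an $n$-dependent but harmless way.

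For the coarse levels $\ell\le\ell_n$ I would use a union bound over the at most $2^{\ell_n+1}\asymp n$ dyadic increments. Since $b_n=\pi(E_{[\psi^{\leftarrow}(n)]})\psi^{\leftarrow}(n)$ is regularly varying of positive index $(\zeta+1)/\beta$ (Remark \ref{rem:mainresults}), one has $b_n^{-1}\log n\to0$, so this block count is negligible at speed $b_n$. Each increment equals $\tfrac1{a_n}\sum_{i\in I_{\ell,k}}X_i$ over a time block $I_{\ell,k}$ of length $\asymp n2^{-\ell}$ (the fractional end-point corrections being absorbed as in Lemma \ref{lem:expequiv:id}), so it is infinitely divisible with
\[
\log E\Big[\exp\Big(\tfrac{\theta b_n}{a_n}\sum_{i\in I_{\ell,k}}X_i\Big)\Big]=\int_S g\Big(\tfrac{\theta b_n}{a_n}\sum_{i\in I_{\ell,k}}f(s_i)\Big)m(ds),
\]
with $g$ as in \eqref{eq:g}. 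Optimizing the exponential Chebyshev bound over $\theta>0$---legitimate because in the large-deviation regime the moment generating function is finite for every $\theta$, while at the moderate scaling the relevant arguments stay in the small, Gaussian-type regime of the cumulant---and using the small-argument quadratic behaviour of the block cumulant established below gives a rate that is \emph{uniform in the level}: with $\eta_{\ell,p}=p\,2^{-\ell\kappa/2}$ the optimal $\theta$ scales like $2^{\ell\kappa/2}$ and produces $\tfrac1{b_n}\log P[\pm\Delta_{\ell,k}>\eta_{\ell,p}]\le-c\,p^2$ for all $\ell\le\ell_n$. Summing the $\asymp n$ terms then yields $\limsup_n b_n^{-1}\log\tilde\mu_n(K_p^c)\le-c\,p^2\to-\infty$ as $p\to\infty$.

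The main obstacle is the block-cumulant estimate driving the last step: uniformly in $n$, $k$ and $\ell$,
\[
\frac1{b_n}\int_S g\Big(\tfrac{\theta b_n}{a_n}\sum_{i\in I_{\ell,k}}f(s_i)\Big)m(ds)\le C\,\theta^{2}\,2^{-\ell\kappa}
\]
on the relevant range of $\theta$. I would derive it by repeating the split-chain decomposition and integrable domination of Step 2 of Lemma \ref{lem:id:limit}---decomposing $S$ by the partition cell $E_{[r\psi^{\leftarrow}(n)]}$ of $s_0$ and invoking \eqref{eq:Qassmp}, \eqref{eq:fassmp} together with the regular variation of $\pi(E_n)$, $\gamma$ and $\psi$---the extra factor $2^{-\ell\kappa}$ reflecting that the block spans only a $2^{-\ell}$-fraction of $[0,1]$, which in the scaling limit is the increment of the almost surely continuous process $c_fU$ over an interval of length $2^{-\ell}$ and is therefore quadratically small. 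This is exactly where long-range dependence bites: the block increments are strongly dependent, so the geometric decay in $\ell$ cannot come from independence but must be extracted from the self-similar regular-variation structure, precisely the machinery already assembled for Lemma \ref{lem:id:limit}.
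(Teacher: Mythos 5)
Your overall architecture (dyadic chaining with level-dependent thresholds $\eta_{\ell,p}$, a fine-scale reduction to $\max_i|X_i|$, and a coarse-scale exponential Chebyshev bound per dyadic block) is a genuinely different route from the paper, which instead bounds the modulus of continuity at a \emph{single} scale $u$ by a union over all $O(n^2 u)$ blocks of length at most $[nu]+2$, applies one exponential Chebyshev inequality at a \emph{fixed} parameter $\lambda$, invokes Lemma \ref{lem:id:limit} to evaluate $\limsup_n b_n^{-1}\int_S g(\cdot)\,m(ds)$, and only then sends $u\to 0$ and finally $\lambda\to\infty$. That ordering of limits is the whole point of the paper's proof: it never needs any estimate that is uniform in the scale or in the Chebyshev parameter.

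Your argument, by contrast, hinges entirely on the block-cumulant bound
$b_n^{-1}\int_S g\bigl(\theta b_n a_n^{-1}\sum_{i\in I_{\ell,k}}f(s_i)\bigr)m(ds)\le C\theta^2 2^{-\ell\kappa}$,
required uniformly in $n$, in $\ell\le\log_2 n$, and for $\theta$ as large as $2^{\ell\kappa/2}\asymp n^{\kappa/2}$, and this is where there is a genuine gap. First, it is only asserted (``I would derive it by repeating\dots''); Lemma \ref{lem:id:limit} and Lemma \ref{lem:unifint} give a limit and an integrable domination for a \emph{fixed} argument $\lambda$ of $g$, and upgrading them to a non-asymptotic bound that is quadratic in a parameter growing polynomially in $n$ is a substantial new estimate, not a repetition. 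Second, and more seriously, in the large-deviation regime ($c_n=1$, $\tilde g=g$) the bound is false as stated: $g$ in \eqref{eq:g} is the full Poissonian cumulant and is in general superquadratic (e.g.\ $g(\lambda)=e^\lambda-1-\lambda$ for $\rho=\delta_1$), so the limiting block cumulant grows faster than $\theta^2$ and the optimized Chebyshev exponent with $\eta_{\ell,p}=p\,2^{-\ell\kappa/2}$ degenerates to $0$ as $\ell$ grows rather than staying at $-cp^2$; your own parenthetical acknowledges that only ``small, Gaussian-type'' arguments are quadratic, but at $\theta\asymp 2^{\ell\kappa/2}$ the argument of $g$ is not uniformly small on the support of $m$. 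The scheme might be salvageable with thresholds tuned to $\alpha$ and a genuinely two-sided quantitative control of the block cumulant, but as written the coarse-level step does not go through for Theorem \ref{thm:main}, and even for Theorem \ref{thm:moderate} the uniformity in $\theta$ and $n$ is unproved.
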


\begin{proof}
We use the notation $w(h, u ) :=  \sup\limits_{s,t\in[0,1],|s-t|< u }|h(s)-h(t)|$ for the modulus of  continuity of a function $h:[0,1]\rightarrow \mathbb{R}^d$. First we claim that for any $\epsilon>0,$  
\begin{equation}\label{mcont:id} 
	\lim_{ u \rightarrow 0}\limsup_{n\rightarrow \infty} \frac{1}{b_n}\log P\big(w(\tilde Y_n, u )>\epsilon\big)=-\infty, 
\end{equation} 
where $\tilde Y_n$ is the polygonal process in \eqref{e:polyg:id}.  Let us prove the lemma assuming that the claim is true.  By (\ref{mcont:id}) and the continuity of the paths of $\tilde Y_n$, there is $ u _k>0$ such that for all $n\geq 1$ 
\[
	P\big( w(\tilde Y_n, u _k)\ge k^{-1}  \big)\le e^{-p b_n   k}, 
\] 
and set $A_k=\{\xi\in \mathcal{D}:w(\xi, u _k)<k^{-1},\xi(0)=0\}.$ Now the set $K_p :=  \overline{\cap_{k\ge 1} A_k}$ is compact in $\mathcal{D}_S$ and by the union of events bound it follows that  
\[  
\limsup_{n\rightarrow \infty }\frac{1}{b_n}\log P(\tilde Y_n\notin K_p)\le -p,
\] 
establishing the exponential tightness. 

 Next we prove the claim (\ref{mcont:id}). Observe that for any $\epsilon>0$, $ u >0$ small, $ \lambda >0$ and $n>2/ u $ 
\begin{align*}
& P  \big( w(\tilde Y_n, u ) > \epsilon \big)   \le  P\Big( \max_{0\le i<j\le n,j-i\le  [n u ]+2}\frac{1}{a_n}\Big|\sum_{k=i}^jX_k\Big|>\epsilon \Big)\\ 
& \le  n\sum_{i=1}^{[2n u ]} P\Big(  \frac{b_n}{a_n}\Big|\sum_{k=1}^{i}X_k\Big|>b_n\epsilon\Big)\\ 
& \le  ne^{-b_n\lambda \epsilon} \sum_{i=1}^{[2n u ]}E\Big[ \exp  \Big( \frac{\lambda b_n}{a_n}\sum_{k=1}^iX_k\Big) +\exp \Big(  -\frac{\lambda b_n}{a_n}\sum_{k=1}^iX_k\Big) \Big]\\
& =  ne^{-b_n\lambda \epsilon} \sum_{i=1}^{[2n u ]}\exp\Big\{ \int_S  g \Big( \frac{\lambda  b_{ n}}{a_{ n} }   \sum_{k=1}^{i} f( s_{ i}) \Big) m(ds) \Big\} \\
&\ \ \ \  + ne^{-b_n\lambda \epsilon} \sum_{i=1}^{[2n u ]}\exp\Big\{ \int_S  g \Big( -\frac{\lambda  b_{ n}}{a_{ n} }   \sum_{k=1}^{i} f( s_{ i}) \Big) m(ds) \Big\}\Big).
\end{align*}
Now using the convexity of $g$ we get 
\begin{align*}
&  P  \big( w(\tilde Y_n, u ) > \epsilon \big) \\
& \le  \frac{4n^2 u }{e^{b_n\lambda \epsilon}}\exp\Big\{ \int_S  g \Big( \frac{\lambda  b_{ n}}{a_{ n} }   \sum_{k=1}^{[ 2 n  u  ]} |f( s_{ i})| \Big) m(ds) \Big\}\\
&   \ \ \ \ +  \frac{4n^2 u }{e^{b_n\lambda \epsilon}}\exp\Big\{ \int_S  g \Big( -\frac{\lambda  b_{ n}}{a_{ n} }   \sum_{k=1}^{[ 2 n  u  ]} |f( s_{ i})| \Big) m(ds) \Big\}.
\end{align*}
 Therefore by Lemma \ref{lem:id:limit} we have 
\[
 \lim_{ u \rightarrow 0}\limsup_{n\rightarrow    \infty}\frac{1}{b_n} \log P\big( w(\tilde Y_n, u )>\epsilon  \big)  \le  -\lambda \epsilon.
\]
Now, letting $\lambda\rightarrow \infty$ we obtain \eqref{mcont:id}. 
\end{proof}

\begin{lemma}\label{lem:ratefn:id} 
Assume that the conditions of  Theorem \ref{thm:moderate} or Theorem   \ref{thm:main} hold and let $ \Lambda_{ j}^{ *}$ be as defined in \eqref{eq:fidirtfn}. Then for any $ j=\{   0=t_{ 0}<t_{ 1}<\cdots<t_{ 1}\le 1  \}\in \mathcal{J}$ and any function $ \xi$ of bounded variation on $ [ 0,1]$ satisfying $ \xi( 0)=0$,
\[
	\sup_{ j\in \mathcal{J}} \Lambda_{ j}^{ *}\Big( \xi( t_{ 1}),\xi( t_{ 2})-f( t_{ 1}),\ldots, \xi( t_{ | j |})-\xi( t_{ | j |-1})\Big)
\]
\[ 
 	=I(\xi):= \left\{  \begin{array}{ll} \Lambda^{ *}_{ m }( \xi^{ \prime})   & \mbox{ if } \xi\in \mathcal{AC} \mbox{ and Theorem \ref{thm:moderate} holds}\\
	\Lambda^{ *}( \xi^{ \prime})   & \mbox{ if } \xi\in \mathcal{AC} \mbox{ and Theorem \ref{thm:main} holds}\\  \infty & \mbox{ otherwise.} \end{array}    \right.
\]
where $ \Lambda^{ *}_{ m }( \cdot)  $  is defined in \eqref{eq:ratefnform:id:mod} and $ \Lambda ^{ *}(\cdot)$ is defined in \eqref{eq:ratefnform:id}.
\end{lemma}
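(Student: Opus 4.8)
The statement is the identification of the projective-limit (Dawson--G\"artner) rate function $I(\xi)=\sup_{j\in\mathcal{J}}H_j(p_j\xi)$ with the explicit variational functional of \eqref{eq:ratefnform:id} (resp.\ its moderate counterpart), so the plan is the standard two-part argument for Mogulskii-type sample-path rate functions: first show that finiteness of $I(\xi)$ forces $\xi\in\mathcal{AC}$, and then evaluate the supremum on $\mathcal{AC}$. The starting point is to unfold the conjugate. Since $H_j(p_j\xi)=\Lambda^{*}_j(\xi(t_1),\dots,\xi(t_{|j|})-\xi(t_{|j|-1}))=\sup_{\underline\lambda}\{\sum_i\lambda_i(\xi(t_i)-\xi(t_{i-1}))-\Lambda_j(\underline\lambda)\}$, I would attach to each pair $(j,\underline\lambda)$ the step function $\psi_j=\sum_i\lambda_i I_{(t_{i-1},t_i]}\in L_\infty[0,1]$, so that the linear term is exactly $\int_0^1\psi_j\,d\xi$. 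Reading off the representation of $\Lambda_j$ from Lemma~\ref{lem:id:limit}, the nonlinear term equals $\int_0^\infty r^\zeta E[I_{[r^\beta V\le1]}\tilde g(c_f\int_0^{1-r^\beta V}\psi_j(t+r^\beta V)\,U(dt))]\,dr$, that is, the functional appearing inside $\Lambda^{*}$ evaluated at $\psi_j$ but with the test function shifted by the random delay $r^\beta V$ coming from the first excursion. With this dictionary $\sup_j H_j(p_j\xi)$ becomes a supremum, over step functions, of $\int_0^1\psi_j\,d\xi$ minus the (shifted) functional.

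For $\xi\in\mathcal{AC}$ with $\xi'=\varphi$ I would then prove two matching inequalities. For the upper bound, each step function $\psi_j$ is an admissible test function in the supremum defining $\Lambda^{*}(\varphi)$, and $\int_0^1\psi_j\,d\xi=\int_0^1\psi_j\varphi\,dt$ by absolute continuity, so---once the shifted nonlinear term is reconciled with the unshifted one in \eqref{eq:ratefnform:id}---one gets $H_j(p_j\xi)\le\Lambda^{*}(\varphi)$ and hence $\sup_j H_j\le\Lambda^{*}(\varphi)$. For the reverse bound I would fix an arbitrary $\psi\in L_\infty[0,1]$ and a refining sequence of rational partitions whose step functions $\psi_{j_n}$ approximate $\psi$; then $\int_0^1\psi_{j_n}\varphi\,dt\to\int_0^1\psi\varphi\,dt$, while the nonlinear functionals converge along the sequence by the almost-sure continuity of $U$ (which makes the inner integrals $\int_0^{1-r^\beta V}\psi_{j_n}\,U(dt)$ converge) together with dominated convergence in $r$, the majorant being exactly the integrable function constructed in Step~2 of Lemma~\ref{lem:id:limit}. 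Passing to the limit yields $\sup_j H_j(p_j\xi)\ge\int_0^1\psi\varphi\,dt-(\text{nonlinear})(\psi)$ for every $\psi$, and a final supremum over $\psi$ gives $\ge\Lambda^{*}(\varphi)$. The Theorem~\ref{thm:moderate} case is handled verbatim with $\tilde g(x)=\sigma^2x^2/2$ in place of $g$, producing $\Lambda^{*}_{m}$.

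To show $I(\xi)=\infty$ for $\xi\in\mathcal{BV}\setminus\mathcal{AC}$, I would use that the Lebesgue--Stieltjes measure $d\xi$ then carries a nonzero singular part. Choosing rational partitions that isolate a short interval $(t_{i-1},t_i]$ on which $\xi$ accumulates a fixed amount $\eta>0$ of (singular) variation while $t_i-t_{i-1}\to0$, and loading a single large coefficient $\lambda$ there with zeros elsewhere, the linear term $\lambda(\xi(t_i)-\xi(t_{i-1}))\approx\lambda\eta$ grows without bound. Meanwhile the nonlinear term stays bounded, because the corresponding increment $U(t_i-r^\beta V)-U(t_{i-1}-r^\beta V)$ tends to $0$ by the continuity of $U$, so $g$ (resp.\ $\sigma^2(\cdot)^2/2$) is evaluated at a vanishing argument; the regular-variation (Potter-type) bounds keep the $r$-integral controlled uniformly as the interval shrinks. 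Hence $H_j(p_j\xi)\to\infty$.

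The main obstacle I anticipate is precisely the reconciliation flagged above: the $\tilde g$-argument delivered by Lemma~\ref{lem:id:limit} is $c_f\sum_i\lambda_i(U(t_i-r^\beta V)-U(t_{i-1}-r^\beta V))=c_f\int_0^{1-r^\beta V}\psi_j(t+r^\beta V)\,U(dt)$, carrying the random shift $r^\beta V$, whereas the functional inside $\Lambda^{*}$ integrates the unshifted $\psi$. Showing that the two suprema nonetheless coincide---presumably exploiting the change of variables $w=r^\beta V$ in the $r$-integral together with the self-similarity of the inverse stable subordinator $U$---and simultaneously commuting the partition-refinement limit with the $r$-integration, is where the real work lies; everything else reduces to convex-duality bookkeeping and the domination already established in Lemma~\ref{lem:id:limit}. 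A secondary delicate point is ensuring, in the non-$\mathcal{AC}$ case, that the nonlinear term remains bounded uniformly as the isolating interval shrinks, for which the continuity of $U$ and the Potter-type control of the $r$-integral are exactly what is required.
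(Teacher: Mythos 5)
Your overall strategy is the same as the paper's: map each $(j,\underline{\lambda})$ to the step function $\psi_j=\sum_i\lambda_iI_{(t_{i-1},t_i]}$ to get $\Lambda^{*}(\xi')\ge\sup_jH_j(p_j\xi)$ on $\mathcal{AC}$, approximate a general $\psi\in L_\infty[0,1]$ by uniformly bounded step functions and interchange limits for the reverse inequality (the paper cites continuity of $\tilde g$ and Fatou; your appeal to a dominating function in the spirit of Step~2 of Lemma~\ref{lem:id:limit} is, if anything, the more honest justification of the required $\limsup_n\int\ge\int\lim_n$), and blow up the linear term on the singular part when $\xi\notin\mathcal{AC}$. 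However, your non-absolutely-continuous step contains a genuine gap. You propose to isolate a \emph{single} short interval $(t_{i-1},t_i]$ carrying a fixed amount $\eta>0$ of singular variation with $t_i-t_{i-1}\to0$ and to load one large coefficient there. This fails twice over: if the singular part of $d\xi$ is continuous (Cantor-type), no single interval of vanishing length carries a fixed amount of variation; and even on an interval of large variation, the quantity that actually enters $\Lambda^{*}_j$ is the increment $\xi(t_i)-\xi(t_{i-1})$, which can be negligible because of oscillation. The paper instead uses the negation of absolute continuity directly: a family $(r_p^n,s_p^n]$, $p\le k_n$, with $\sum_p(s_p^n-r_p^n)\to0$ but $\sum_p|\xi(s_p^n)-\xi(r_p^n)|\ge\epsilon$, and loads $\lambda^{n*}_{2p}=A\,\mathrm{sgn}\bigl(\xi(s_p^n)-\xi(r_p^n)\bigr)$ on \emph{every} such interval (zeros in between), so the linear term is $\ge A\epsilon$ while $\Lambda_{j^n}(\underline{\lambda}^{n*})\to0$ by dominated convergence and $g(0)=0$, since the argument of $\tilde g$ is controlled by $A$ times the $dU$-mass of a set of Lebesgue measure tending to $0$.

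Concerning the ``main obstacle'' you flag: the paper performs no reconciliation at all. In its proof, $\int_0^{1-r^{\beta}V}\psi_j(t)\,U(dt)$ for a step function $\psi_j$ is simply read as $\sum_i\lambda_i\bigl(U(t_i-r^{\beta}V)-U(t_{i-1}-r^{\beta}V)\bigr)$, i.e.\ the formula in \eqref{eq:ratefnform:id} is to be understood (or corrected) so that the $U$-increments are taken over the shifted intervals $(t_{i-1}-r^{\beta}V,\,t_i-r^{\beta}V]$, exactly as in $\Lambda_j$. You are right that the shifted and unshifted functionals differ pathwise, but no change-of-variables or self-similarity argument appears in the paper, and for a fixed $\psi$ no such argument would make them equal; planning the proof around establishing that equality is the wrong move. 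Adopt the shifted reading throughout and that difficulty disappears, leaving only the duality bookkeeping and the limit interchanges, for which your outline is otherwise sound.
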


\begin{proof} 
First assume that $\xi\in \mathcal{AC}$. It is easy to see that the inequality 
\[ I(\xi) \ge \sup_{j\in \mathcal{J}}\Lambda^{*}_{j}(\xi(t_1),\xi(t_2)-\xi(t_1), \ldots,\xi(t_{|j|})-\xi( t_{ | j |-1}))\]
 holds  by considering a function $\psi\in L_\infty[0,1]$, which takes the value $ \lambda_i$ in the interval $(t_{i-1},t_i]$. 
 
 For the other inequality, take any $\psi\in L_{ \infty }[ 0,1]$ and choose  a sequence of uniformly bounded functions $\psi^n$ converging to $\psi$ almost everywhere  on $[0,1]$, such that for every $n$, $\psi^n$ is of the form $\sum_i\lambda_i^{n}I_{A_i^{n}},$ where $A_i^{n}=(t_{i-1}^{n},t_i^{n}]$, for some \[j^{ n}=\big\{0=t_{ 0}^{ n}<t_1^{n}<t_2^{n}<\cdots<t_{k_n}^{n}=1\big\}.\] Then by the continuity of $\tilde g$  and Fatou's Lemma, 
\begin{eqnarray*}
&& \int_{ 0}^{ 1} \psi( t)\xi^{ \prime }( t) dt -     \int_{0}^{ \infty }r^{ \zeta }E  \Big[ I_{[  r^{ \beta }V \le 1]}\tilde g \Big(c_{ f}   \int\limits_{0 }^{1- r^{ \beta }V } \psi( t) U( dt) \Big) \Big]dr\\
&=& \int_{ 0}^{ 1}\lim_{ n} \psi^{ n}( t)\xi^{ \prime }( t) dt -     \int_{0}^{ \infty }r^{ \zeta }E  \Big[ I_{[  r^{ \beta }V \le 1]}\tilde g \Big(c_{ f}   \int\limits_{0 }^{1- r^{ \beta }V }\lim_{ n} \psi^{ n}( t) U( dt) \Big) \Big]dr\\
&=& \lim_{ n} \int_{ 0}^{ 1}\psi^{ n}( t)\xi^{ \prime }( t) dt -     \int_{0}^{ \infty }r^{ \zeta }E  \Big[ I_{[  r^{ \beta }V \le 1]}\lim_{ n} \tilde g \Big(c_{ f}   \int\limits_{0 }^{1- r^{ \beta }V } \psi^{ n}( t) U( dt) \Big) \Big]dr\\
&\le& \lim_{ n} \int_{ 0}^{ 1}\psi^{ n}( t)\xi^{ \prime }( t) dt -   \limsup_{ n}  \int_{0}^{ \infty }r^{ \zeta }E  \Big[ I_{[  r^{ \beta }V \le 1]} \tilde g \Big(c_{ f}   \int\limits_{0 }^{1- r^{ \beta }V } \psi^{ n}( t) U( dt) \Big) \Big]dr\\
&=& \liminf_n\left\{ \sum_{i=1}^{k_n}\lambda_i^{n}\cdot \big( \xi(t_i^{n})-\xi(t_{i-1}^{n})\big)-\Lambda_{j^{ n}} (\lambda_1^{n},\cdots,\lambda_n^{n})\right\}\\
& \le & \sup_{j\in \mathcal{J}}\Lambda^{*}_{j}\big(\xi(t_1),\xi(t_2)-\xi(t_1), \ldots,\xi(t_{|j|})-\xi(t_{|j|-1})\big).
\end{eqnarray*}

Now suppose that $\xi$ is not absolutely continuous. That is, there exists $\epsilon>0$  and $0\le r_1^n<s_1^n\le r_2^n<\cdots \le r_{k_n}^n<s^n_{k_n}\le 1$, such that $\sum_{i=1}^{k_n}(s_i^n-r_i^n)\rightarrow 0$ but $\sum_{i=1}^{k_n}|\xi(s_i^n)-\xi(r_i^n)|\ge \epsilon$. Let $j^n$ be such that $t^n_{2p}=s^n_p$ and $t^n_{2p-1}=r^n_p$ (so that $|j^n|=2k_n$). Now   
\begin{eqnarray*}
&&  \sup_{j\in \mathcal{J}}\Lambda^{*}_{j}\big(\xi(t_1),\xi(t_2)-\xi(t_1), \ldots,\xi(t_{|j|})-\xi(t_{|j|-1})\big)\\ 
&\ge & \limsup_n \left\{\sup_{\underline{\lambda}^n\in \mathbb{R}^{2k_n}} \sum_{i=1}^{2k_n}\lambda_i^n\cdot \big(\xi(t_i^n)-\xi(t_{i-1}^n)\big) - \Lambda_{ j^{ n}}(\underline{\lambda}^n) \right\}\\ 
& \ge & \limsup_n \left\{ A \sum_{i=1}^{k_n}\big|\xi(s_i^n)-\xi(r_i^n)\big|-\Lambda_{j^{ n}}(\underline{\lambda}^{n*})\right\}\ge A\epsilon, 
\end{eqnarray*}
where $\lambda^{n*}_{2p-1}=0$ and $\lambda^{n*}_{2p}=A\big(\xi(s_i^n)-\xi(r_i^n)\big)/ |\xi(s_i^n)-\xi(r_i^n)|$ ($=0$ if $\xi(s_i^n)-\xi(r_i^n)=0$) for any $A>0$. The last inequality holds since $ \Lambda _{ j}( \underline{ \lambda} ^{ n*})\to 0$ as $ n \to \infty $, which follows from an application of dominated convergence theorem and  the fact that $ g$ is continuous at $0$ with $ g( 0)=0$.  This completes the proof since $A$ is arbitrary.  
\end{proof}

\begin{lemma}\label{lem:MClimit} 
Suppose $ f:E\to \mathbb{R}$ is  $ L_{ 1}( E,\mathcal{E},\pi)$ and $c_{ f}= \int_{ E}f( x)\pi( dx)\neq 0.$ Then for  any initial distribution $ \nu$ of $ Z_{ 0}$
\[ 
 	 \left( \frac{ 1}{\gamma ( n) }\sum_{ k=1}^{ [ nt]}f( Z_{ k}),t\in [ 0,1] \right)   \Longrightarrow  c_{ f}\Big( U( t),t\in [ 0,1] \Big) 
\]
in $ \mathcal{D}_{ Sk}$, where $ (U(t):0\le t\le 1)$ is as described in Theorem \ref{thm:moderate}.
\end{lemma}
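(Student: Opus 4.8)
The plan is to exploit the regenerative structure furnished by the atom $a$ to rewrite the partial-sum process as a random time change of a renewal counting process, and then to identify the limit through the stable limit theorem for the return times together with a continuous inversion argument.

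First I would reduce to the case $Z_0\in a$. For a general initial law $\nu$ the first hitting time $T_1$ of $a$ is finite almost surely by Harris recurrence, so $T_1/n\to 0$ and the initial block $\frac{1}{\gamma(n)}\sum_{k=1}^{T_1}f(Z_k)\to 0$ almost surely (a finite sum divided by $\gamma(n)\to\infty$); hence the initial distribution has no effect on the limit in $\mathcal{D}_{Sk}$ and we may assume the chain starts at the atom. Writing $\tau_k=T_k-T_{k-1}$ and the cycle sums $\xi_k=\sum_{i=T_{k-1}+1}^{T_k}f(Z_i)$, the strong Markov property at $a$ (see \cite{athreya:ney:1978}) makes $\{(\tau_k,\xi_k)\}_{k\ge 1}$ i.i.d. Using the representation $\pi(A)=\pi(a)E_a[\sum_{i=0}^{T_1-1}I_A(Z_i)]$ (valid by uniqueness of the invariant measure, normalized at $a$) together with the fact that $Z_0$ and $Z_{T_1}$ lie in $a$ with the same law, one obtains $E_a[\xi_1]=\pi(a)^{-1}\int_E f\,d\pi=c_f/\pi(a)=:\mu$, finite because $f\in L_1(\pi)$.

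Let $N(m)=\max\{k:T_k\le m\}$ count the returns to $a$ by time $m$. Decomposing along cycles,
\[
\frac{1}{\gamma(n)}\sum_{k=1}^{[nt]}f(Z_k)=\frac{1}{\gamma(n)}\sum_{j=1}^{N([nt])}\xi_j+\frac{1}{\gamma(n)}\sum_{k=T_{N([nt])}+1}^{[nt]}f(Z_k).
\]
The last (incomplete) block is bounded in absolute value by $\max_{j\le N(n)+1}\eta_j$ with $\eta_j=\sum_{i=T_{j-1}+1}^{T_j}|f(Z_i)|$; since the $\eta_j$ are i.i.d. with finite mean, $\max_{j\le M}\eta_j=o(M)$ almost surely, and because $N(n)=O_P(\gamma(n))$ this term vanishes uniformly in $t$. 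For the main block I would write $\sum_{j=1}^{N}\xi_j=\mu N+\sum_{j=1}^{N}(\xi_j-\mu)$; by the strong law of large numbers the centered partial sums satisfy $\max_{j\le M}|\sum_{i\le j}(\xi_i-\mu)|=o(M)$ almost surely, so with $M=N(n)=O_P(\gamma(n))$ the centered contribution is $o_P(\gamma(n))$ uniformly in $t$. Thus it suffices to analyze $\mu\,N([n\cdot])/\gamma(n)$.

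The final step is to show $N([n\cdot])/\gamma(n)\Rightarrow \pi(a)\,U(\cdot)$ in $\mathcal{D}_{Sk}$. The return times are i.i.d. with $P_a[\tau_1>x]=(\pi(a)\gamma(x))^{-1}\in RV_{-\alpha}$, $0<\alpha<1$, hence lie in the domain of attraction of a positive $\alpha$-stable law, so the partial-sum process $T_{[\cdot]}$, suitably normalized, converges in $\mathcal{D}_{Sk}$ to an $\alpha$-stable subordinator. The counting process $N$ is the generalized inverse of $T$, and the first-passage/inversion map is continuous at the strictly increasing limit; since the limiting inverse subordinator $U$ has continuous paths, Skorohod convergence is in fact uniform, and I invoke the continuous mapping theorem as in \cite{billingsley1999cpm}. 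The precise normalization — which produces exactly the factor $\pi(a)$ and the Laplace transform $E\exp(-\lambda S_\alpha(1))=\exp(-\lambda^\alpha/\Gamma(1+\alpha))$ stated for $U$ — is dictated by the return-probability asymptotics $\pi(a)^{-1}\sum_{k=1}^{n}P^k(a,a)\sim\gamma(n)$ of \cite{chen:1999}; with this normalization $E[U(1)]=1$, so the constants combine to give $\mu\,\pi(a)\,U(t)=c_f\,U(t)$. I expect the inversion carried out jointly with the uniform law of large numbers to be the main obstacle: the cycle sums $\xi_j$ need only a first moment, so replacing $\sum_{j\le N}\xi_j$ by $\mu N$ must be controlled through a maximal/SLLN estimate rather than an $L^2$ bound, and the passage from $T_{[\cdot]}\Rightarrow S_\alpha$ to $N\Rightarrow U$ must be justified in the Skorohod topology rather than merely at fixed times.
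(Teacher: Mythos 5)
Your proposal is correct, but it takes a genuinely different route from the paper. The paper first reduces to $f=I_{a}$ via the ratio limit theorem (Theorem 17.3.2 in \cite{meyntweedie:1993}), obtains the finite-dimensional convergence of $\gamma(n)^{-1}I_{[nt]}$ through the duality $\{I_{[nt]}\le \gamma(n)x\}=\{T_{[\gamma(n)x]}\ge [nt]\}$ together with Theorem 2.3 of \cite{chen:1999}, and then proves tightness separately by a modulus-of-continuity estimate for the polygonal process that invokes the stable tail bound of \cite{zolotarev:1986} and, notably, condition \eqref{eq:fassmp} --- an assumption beyond the bare hypotheses $f\in L_{1}(\pi)$, $c_{f}\neq 0$ stated in the lemma. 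You instead run the classical regenerative argument: decompose along the i.i.d.\ cycles at the atom, use Kac's formula to identify $E_{a}[\xi_{1}]=c_{f}/\pi(a)$, kill the incomplete block and the centered cycle sums by maximal SLLN estimates, and reduce everything to $N([n\cdot])/\gamma(n)\Rightarrow \pi(a)U(\cdot)$ via the functional stable limit theorem for the return times and the continuity of the inversion map at strictly increasing limits. Your approach buys two things: tightness comes for free (the error terms are controlled uniformly in $t$ and the main term converges as a process by inversion), so the proof genuinely only uses $f\in L_{1}(\pi)$; and your bookkeeping of the factor $\pi(a)$ is more transparent --- the paper's intermediate display $\gamma(n)^{-1}I_{[nt]}\Rightarrow U(t)$ is consistent with the final limit $c_{f}U(t)$ only once one tracks that $E[I_{n}]\sim\pi(a)\gamma(n)$, i.e.\ the correctly normalized statement is $\gamma(n)^{-1}I_{[nt]}\Rightarrow\pi(a)U(t)$, exactly as in your step. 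The paper's route is shorter on the page because the fidi convergence is outsourced to \cite{chen:1999}. The two points you flag as the main obstacles (replacing $\sum_{j\le N}\xi_{j}$ by $\mu N$ with only a first moment, and passing from $T_{[\cdot]}$ to $N$ in the Skorohod topology) are indeed the places where detail is needed, but both go through as you describe: $\max_{j\le M}|\sum_{i\le j}(\xi_{i}-\mu)|=o(M)$ a.s.\ follows from the SLLN alone, and $J_{1}$-convergence of the inverse to the a.s.\ continuous $U$ upgrades to uniform convergence.
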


\begin{proof} 
  This lemma is an extension of  Theorem 2.3 in \cite{chen:1999} which states that for any initial distribution $ \nu$ of $ Z_{ 0}$
  \[ 
 	\frac{ 1}{ \gamma ( n) }\sum_{ k=1}^{ n}f( Z_{ k}) \Longrightarrow c_{ f} U( 1).  
\]
 We proceed in a way similar to the proof of that theorem. By a  well known ratio limit theorem (see e.g. Theorem 17.3.2 in \cite{meyntweedie:1993}) we know that if $ g_{ 1},g_{ 2}\in L_{ 1}(  E, \mathcal{E}, \pi)$ with $ \int g_{ 2}( x)\pi( dx)\neq 0$ then 
 \[ 
 	\lim_{ n \rightarrow \infty   }  \sum\limits_{ k=1}^{ n}g_{ 1}( Z_{ k})\Big/ \sum\limits_{ k=1}^{ n}g_{ 2}( Z_{ k})  = \frac{ \int g_{ 1}( x) \pi( dx)}{ \int g_{ 2}( x)\pi( dx) }   
\]
Therefore it suffices to consider the function $ f:E^{ \prime }\to \mathbb{R}$ as $ f( x)=I_{ a}( x)$. 

Now, suppose $ I_{ n}=\sum_{k=1 }^{ n}f( Z_{ k})=\sum_{k=1 }^{ n}I_{ [ Z_{ k}\in a ]}$. By Theorem 2.3 in \cite{chen:1999} we get that  for any $ j=\{   0<t_{ 1}<\cdots <t_{ | j |}\le 1  \}\in \mathcal{J}$ and $ ( x_{ 1},\ldots, x_{ |j|})\in \mathbb{R}^{ |j|}$
\begin{eqnarray*}
	 &  & P_{ \nu} \big[  ( I_{ [ nt_{ 1}]},\ldots,I_{ [ nt_{ |j|}]})\le \gamma ( n)( x_{ 1},\ldots, x_{ |j|})  \big]\\
	 & = & P_{ \nu} \big[  ( T_{ [ \gamma ( n)x_{ 1}]},\ldots, T_{ [ \gamma ( n)x_{ |j|}]} ) \ge ( [ nt_{ 1}], \ldots , [ nt_{ |j|}])\big]   \\
	 & \sim & P_{ \nu} \Big[  \frac{ 1}{\gamma ^{ \leftarrow } ( k)}\big( T_{ [ kx_{ 1}]},\ldots , T_{ [ kx_{ |j|}]} \big)\ge  \frac{ 1}{\gamma ^{ \leftarrow } ( k)} \big( [ \gamma ^{ \leftarrow }( k)t_{ 1}],\ldots, [ \gamma ^{ \leftarrow }( k)t_{ |j|}] \big)  \Big] \\
	 & \to & P \Big[  \big( S_{ \alpha }( x_{ 1}),\ldots, S_{ \alpha }( x_{ |j|}) \big)  \ge ( t_{ 1}, \ldots, t_{ |j|})\Big] \\
	 & = & P \Big[ \big(  U( t_{ 1}),\ldots, U( t_{ |j|}) \big)\le ( x_{ 1},\ldots, x_{ |j|})\Big] 
\end{eqnarray*}
Therefore,
\[ 
 	\left( \frac{ 1}{\gamma ( n) }   I_{ [ nt_{ i}]}, i=1,\ldots |j|  \right) \Longrightarrow \Big( U( t_{ i}) , i=1,\ldots, |j|\Big)  , 
\]
which in turn implies
\begin{equation}\label{eq:findim} 
  	\left( \frac{ 1}{\gamma ( n) }   \sum_{ k=1}^{ [ nt_{ i}]}f( Z_{ k}), i=1,\ldots |j|  \right) \Longrightarrow c_{ f} \Big( U( t_{ i}) , i=1,\ldots, |j|\Big) .  
\end{equation}
We now need to prove tightness in the space $ \mathcal{D}_{ Sk}$. For that purpose consider the polygonal process
\[ 
 	\tilde L_{ n}( t)= \frac{ 1}{ \gamma ( n)}   \Big( \sum_{ k=1}^{ [ nt]}f( Z_{ k})+( nt-[ nt])f( Z_{ [ nt]+1}) \Big) \ \ \ \ \mbox{ for all }t\in [ 0,1].
\]
Let $ w( h,u )=\sup\limits_{ s,t\in[ 0,1],| s-t |<u }| h( s)-h( t) |$, be the modulus of continuity of a function $ h:[ 0,1] \rightarrow \mathbb{R}.$ Note that if suffices to prove that for any $ \epsilon >0$
\begin{equation}\label{eq:equicont:id}
	\lim_{ u \rightarrow 0 } \limsup_{ n \rightarrow \infty   } P_{ \nu}\big[  w( \tilde L_{ n},u)>\epsilon  \big] =0.
\end{equation}
For that purpose observe  that 
\begin{eqnarray*}
	 &&P_{ \nu}\big[  w( \tilde L_{ n},u)>\epsilon  \big] \\ & \le  & P_{ \nu} \Big( \max_{ 0\le i<j\le n,j-i\le [ nu ]+2} \frac{ 1}{\gamma ( n)}\Big| \sum_{ k=i}^{ j}f( Z_{ k})\Big|>\epsilon  \Big) \\
	 & \le &P_{ \nu} \Big( \max_{ 0\le i<j\le n,j-i\le [ nu ]+2} \frac{ 1}{\gamma ( n)}\sum_{ k=i}^{ j} \big| f( Z_{ k})\big|>\epsilon  \Big)\\
	 & \le &P_{ \nu} \Big( \max_{ 0\le i\le n-[ nu ]-2} \frac{ 1}{\gamma ( n)}\sum_{ k=i}^{ i+[ nu]+2} \big| f( Z_{ k})\big|>\epsilon  \Big)	 
\end{eqnarray*}
It is easy to check that for any non-decreasing function $ h:\mathbb{R}_{ +}\to \mathbb{R}_{ +}$ and $ u\in [ 0,1]$
\[ 
 	\sup_{ 0\le t\le 1-u} \big\{ h( t+u)-h( t)\big\} \le 2\max_{ 1\le i\le [ 1/u]+1}  \big\{ h( iu)-h( ( i-1)u) \big\}  
\]
which implies 
\begin{eqnarray*}
	 &  & P_{ \nu} \Big( \max_{ 0\le i\le n-[ nu ]-2} \frac{ 1}{\gamma ( n)}\sum_{ k=i}^{ i+[ nu]+2} \big| f( Z_{ k})\big|>\epsilon  \Big)\\
	 & \le & P_{ \nu} \Big( \max_{ 1\le i\le [ 1/u]+1}\sum_{ k=( i-1)[ nu]+1}^{ i[ nu]}\big| f( Z_{ k}) \big|>\epsilon /2  \Big) \\
	 & \le & \Big( \frac{ 1}{u }+1 \Big) \sup_{ x}P_{ x} \Big( \frac{ 1}{\gamma ( n) }\sum_{ k=1}^{ [ nu]} \big| f( Z_{ k}) \big| >\epsilon /2 \Big) 
\end{eqnarray*}
Now observe that 
\begin{eqnarray*}
	 &  &  \sup_{ x}P_{ x} \Big( \frac{ 1}{\gamma ( n) }\sum_{ k=1}^{ [ nu]} \big| f( Z_{ k}) \big| >\epsilon /2 \Big)\\
	 & \le & \sup_{ x}P_{ x} \Big( \frac{ 1}{\gamma ( n) }\sum_{ k=1}^{ [ nu]\wedge T} \big| f( Z_{ k}) \big|>\epsilon /4  \Big) +P_{ a}\Big( \frac{ 1}{\gamma ( n) }\sum_{ k=0}^{[ nu] \vee T-T } \big| f( Z_{ k}) \big|>\epsilon /4  \Big) \\
	 & \le & \sup_{ x}P_{ x} \Big( \frac{ 1}{\gamma ( n) }\sum_{ k=1}^{  T\wedge  n} \big| f( Z_{ k}) \big|>\epsilon /4  \Big) +P_{ a}\Big( \frac{ 1}{\gamma ( n) }\sum_{ k=1}^{[ nu]  } \big| f( Z_{ k}) \big|>\epsilon /4  \Big) 
\end{eqnarray*}
Again by assumption \eqref{eq:fassmp} it we get that 
\[ 
 	\limsup_{ n \rightarrow \infty  }    \sup_{ x}P_{ x} \Big( \frac{ 1}{\gamma ( n) }\sum_{ k=1}^{  T\wedge n} \big| f( Z_{ k}) \big|>\epsilon /4  \Big) =0,
\]
and by \eqref{eq:findim} 
\[ 
 	   \limsup_{ n \rightarrow \infty   } P_{ a}\Big( \frac{ 1}{\gamma ( n) }\sum_{ k=0}^{[ nu]  } \big| f( Z_{ k}) \big|>\epsilon /4  \Big) =P\big( c_{ | f |}U( u)>\epsilon /4 \big) ,
\]
where $ c_{ |f|}=\int_{ E}|f( s|)\pi( ds).$ Therefore,
\begin{eqnarray*}
	&& \limsup_{ n \rightarrow \infty   } P_{ \nu}\big[  w( \tilde L_{ n},u)>\epsilon  \big]\\
	 &\le  & \limsup_{ n \rightarrow \infty   }\Big( \frac{ 1}{u }+1 \Big) \sup_{ x}P_{ x} \Big( \frac{ 1}{\gamma ( n) }\sum_{ k=1}^{ [ nu]} \big| f( Z_{ k}) \big| >\epsilon /2 \Big)\\
	 &\le & \Big( \frac{ 1}{u }+1 \Big) P\big( c_{ | f |}U( u)>\epsilon /4 \big)\\
	 & = & \Big( \frac{ 1}{u }+1 \Big)P \big( S_{ \alpha }( \epsilon /4c_{ | f |})\le u \big)  
\end{eqnarray*}
Finally by Theorem 2.5.3 in \cite{zolotarev:1986} we get 
\[ 
	\lim_{ u \rightarrow 0 } \limsup_{ n \rightarrow \infty   } P_{ \nu}\big[  w( \tilde L_{ n},u)>\epsilon  \big] =0 	   
\] and that completes the proof of the lemma.
\end{proof}

\begin{lemma}\label{lem:unifint} 
If the conditions of Theorem \ref{thm:moderate} or Theorem \ref{thm:main} hold then for any $ j=\{ 0=t_{ 0}<t_{ 1}<\cdots<t_{ | j |}\le 1\}\in \mathcal{J}$ and $ \lambda _{ 1},\ldots,\lambda _{ | j |  }\in \mathbb{R}$ there exists $ K\ge 1$ such that 
 \[
	\sup_{ x\in E,n\ge K} E_{ x}  \left[ g \Big( \frac{ 1 }{\gamma ( n) c_{ n}} \sum\limits_{ i=1}^{ | j |}\lambda _{ i} \sum\limits_{k=[nt_{i-1}]+1}^{[nt_{i}]} f( Z_{ k})  \Big)I_{ \big[ g \big( \frac{ 1 }{\gamma ( n)c_{ n} } \sum\limits_{ i=1}^{ | j |}\lambda _{ i} \sum\limits_{k=[nt_{i-1}]+1}^{[nt_{i}]} f( Z_{ k}) \big)\ge N\big]} \right] \longrightarrow 0
\]
as $ N \to \infty $.
\end{lemma}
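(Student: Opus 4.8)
The plan is to first collapse the statement to a one-dimensional estimate and then recognize the required convergence as the \emph{uniform integrability} of a single family of nonnegative random variables, which I would control through the regenerative (split-chain) decomposition of the chain. Write $S_n=\sum_{k=1}^n|f(Z_k)|$ and $\bar\lambda=\max_{1\le i\le|j|}|\lambda_i|$. Since $g$ is convex with $g(0)=0$ (it is the cumulant function \eqref{eq:g} of a centered ID law), for every path
\[
g\Big(\frac{1}{\gamma(n)c_n}\sum_{i=1}^{|j|}\lambda_i\!\!\sum_{k=[nt_{i-1}]+1}^{[nt_i]}\!\!f(Z_k)\Big)\le g\Big(\frac{\bar\lambda}{\gamma(n)c_n}S_n\Big)+g\Big(-\frac{\bar\lambda}{\gamma(n)c_n}S_n\Big),
\]
exactly as in Step 2 of Lemma \ref{lem:id:limit}. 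Hence it suffices to show that $H_n:=g\big(\tfrac{\bar\lambda}{\gamma(n)c_n}S_n\big)$ (and its sign-flipped analogue) is uniformly integrable under $P_x$, uniformly over $x\in E$ and $n\ge K$; this is precisely $\sup_{x,n\ge K}E_x[H_n\mathbf 1_{[H_n\ge N]}]\to0$.

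\textbf{From uniform integrability to a tail bound.} Using the layer-cake identity $E_x[H_n\mathbf 1_{[H_n\ge N]}]=N\,P_x[H_n\ge N]+\int_N^\infty P_x[H_n>u]\,du$, it is enough to produce a tail estimate $\sup_{x,n\ge K}P_x[H_n>u]\le\Theta(u)$ with $\int_0^\infty\Theta(u)\,du<\infty$; then both terms vanish as $N\to\infty$, giving simultaneously the uniform integrability and the uniform bound \eqref{eq:one} used in Lemma \ref{lem:id:limit}. Since $S_n\ge0$ places us on the increasing branch of $g$, one has $\{H_n>u\}\subseteq\{S_n\ge\tfrac{\gamma(n)c_n}{\bar\lambda}\bar g(u)\}$, so the problem reduces to a uniform upper-tail bound for the additive functional $S_n$ at level $\gamma(n)c_n\,\theta$ with $\theta=\bar g(u)/\bar\lambda$. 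The definition $\bar g(t)=\min\{|s|:g(s)=t\}$ is engineered exactly so that a stretched-exponential tail in $\theta$, after the substitution $\theta=\bar g(u)/\bar\lambda$, becomes the integrand of \eqref{eq:integ}.

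\textbf{The tail bound via regeneration.} I would obtain the tail of $S_n$ from the decomposition $S_n\le B_0+\sum_{j=1}^{N_n}B_j$, where $B_0=\sum_{k=1}^{T}|f(Z_k)|$ is the initial excursion (handled under $P_x$), the $B_j$ are i.i.d. copies of $\sum_{i=1}^T|f(Z_i)|$ under $P_a$ with finite mean $\pi(|f|)/\pi(a)$, and $N_n=\sum_{k\le n}\mathbf 1_{[Z_k\in a]}$ is the number of visits to the atom. Assumption \eqref{eq:Gamma} provides, via Markov's inequality and a Chernoff/Cram\'er transform, a stretched-exponential ($\delta$-exponent) control of $B_0$ and of the i.i.d. rewards; the non-stationary first segment is pinned down by \eqref{eq:Qassmp}--\eqref{eq:fassmp}, exactly as those conditions are deployed in Step 2 of Lemma \ref{lem:id:limit}. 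The visit count obeys $N_n/\gamma(n)\Rightarrow U(1)$ by Lemma \ref{lem:MClimit}, and the Mittag--Leffler upper tail $P[U(1)>y]\asymp\exp(-c\,y^{1/(1-\alpha)})$ controls the event that $S_n$ is large because of an atypically large number of excursions.

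\textbf{Main obstacle.} The delicate point is the competition between the two mechanisms that inflate $S_n$: an unusually large number of excursions (rate exponent $(1-\alpha)^{-1}$) versus atypically large rewards (rate exponent $\delta$). This is exactly where the hypothesis $1\le\delta<(1-\alpha)^{-1}$ of Assumption \ref{assmp:integ:g:f} enters: it forces the visit-count fluctuations to decay strictly faster than the reward fluctuations, so that the reward side, matched to \eqref{eq:integ} through $\bar g$, is the binding one. Assembling a \emph{clean, uniform-in-$(x,n)$} stretched-exponential bound for $P_x[S_n\ge\gamma(n)c_n\theta]$ out of these two competing tails — while keeping the first excursion and the random, heavy-tailed excursion lengths under simultaneous control, so that the effective exponent matches the $\delta$ of \eqref{eq:integ} rather than a degraded combined exponent — is the crux of the argument; the passage from this tail estimate back to uniform integrability is then the routine layer-cake step above.
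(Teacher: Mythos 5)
Your architecture matches the paper's: reduce to one dimension by convexity of $g$, convert uniform integrability to an integrable uniform tail bound via the layer--cake identity, pass from $\{H_n>u\}$ to $\{S_n\ge \gamma(n)c_n\bar g(u)/\bar\lambda\}$, and control $S_n$ through the regenerative decomposition with \eqref{eq:Gamma} handling the rewards and \eqref{eq:integ} closing the integral. But the step you yourself flag as ``the crux'' --- a clean, uniform-in-$(x,n)$ stretched-exponential bound on $P_x[S_n\ge\gamma(n)c_n\theta]$ --- is exactly the content of the lemma, and the plan as written does not contain the two ideas needed to execute it. First, the visit count and the block rewards are \emph{dependent} (under $P_a$ the pairs $(W_j,B_j)$ of excursion length and excursion reward are i.i.d.\ across $j$, but $N_n$ is a functional of the $W_j$'s, so one cannot simply multiply a tail bound for $N_n$ by a tail bound for the rewards). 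The paper breaks this dependence by partitioning on $\{l-1<(I_n+1)/\gamma(n)\le l\}$ and applying H\"older with an exponent $p>1$ to get the interpolated bound $R^{1/p}S^{1-1/p}$, where $R$ is a Chernoff bound on the reward sum over $l\gamma(n)$ blocks and $S=P_a[l-1<(I_n+1)/\gamma(n)\le l]$; the extra factor $l^{\delta}$ coming from $R$ is then absorbed by the summable tail of $S$ in $l$. Second, your control of the visit count via $N_n/\gamma(n)\Rightarrow U(1)$ plus the Mittag--Leffler tail of the \emph{limit} does not give a bound uniform in $n$; the paper instead proves the finite-$n$ estimate $P_a[T_{l[\gamma(n)]}\le n]\le c_4\exp(-c_5 l^{1/(1-\alpha+\epsilon)})$ directly, using the duality $\{I_n\ge l\gamma(n)\}=\{T_{[l\gamma(n)]}\le n\}$, the i.i.d.\ return times, Potter bounds, and domination by a right-skewed stable lower tail (Zolotarev's Theorem 2.5.3).

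A smaller but substantive point: the hypothesis $1\le\delta<(1-\alpha)^{-1}$ does not enter as a ``competition of decay exponents'' between $(1-\alpha)^{-1}$ and $\delta$ (the series $\sum_l l^\delta\exp(-cl^{1/(1-\alpha+\epsilon)})$ converges for every $\delta$). It is used to guarantee that the coefficient $l^{\delta-1}/\gamma(n)\le n^{\delta-1}/\gamma(n)^{\delta}$ stays bounded, so that the argument of $\Gamma(t)=\log E_a[\exp(t\sum_{k=1}^T|f(Z_k)|)^{\delta}]$ remains in a fixed neighborhood of the origin where $\Gamma(t)\le K_2 t$; without this linearization the Chernoff bound on $R$ does not produce the exponent $-k_0p\,\bar g(t)^{\delta}$ that matches \eqref{eq:integ}. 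So the proposal is a correct and well-targeted plan, but the lemma's actual proof lives in the quantitative estimate you have deferred, and two of the tools required for it (the H\"older decoupling and the finite-$n$ lower-tail bound for the regeneration times) are absent from the plan.
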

\begin{proof} 
It suffices to prove that for any $ \lambda \in \mathbb{R}, $
 \[
	\sup_{ x\in E,n\ge 1} E_{ x}  \left[ g \Big( \frac{ \lambda  }{\gamma ( n) c_{ n}} \sum\limits_{k=1}^{n} f( Z_{ k})  \Big)I_{ \big[ g \big( \frac{ \lambda  }{\gamma ( n)c_{ n} }  \sum\limits_{k=1}^{n} f( Z_{ k}) \big)\ge N\big]} \right] \longrightarrow 0
\]
as $ N\to \infty $. For that purpose, we look at 
\begin{align*} 
	&  P_{x} \left[  g \Big( \frac{ \lambda }{\gamma (n)c_{ n} }\sum_{i=1}^n f(Z_i) \Big)>t \right]\\
	& \le  P_{x} \left[ \Big(  \frac{  |\lambda| }{ \gamma ( n)c_{ n}}\sum_{i=1}^n \big|f(Z_i)\big|\Big)^{ \delta } >  (  \bar{g}(t))^{ \delta } \right]\\
	& \le  P_{x} \left[ \Big( \frac{  |\lambda| }{ \gamma ( n)c_{ n}}\Big)^{ \delta }\Big( \sum_{i=1}^{ I_n+1}\sum_{k=T_{ i-1}+1 }^{T_{ i} } \big|f(Z_k)\big|\Big)^{ \delta } >  (  \bar{g}(t))^{ \delta } \right]
\end{align*}
where, as before, $ I_{ n}=\sum_{k=1 }^{ n}I_{ [ X_{ k}\in a ]}$. By applying Holder's inequality we get 
\begin{align}
	 &    P_{x} \left[  \Big( \frac{  |\lambda| }{ \gamma ( n)c_{ n}}\Big)^{ \delta }\Big( \sum_{i=1}^{ I_n+1}\sum_{k=T_{ i-1}+1 }^{T_{ i} } \big|f(Z_k)\big|\Big)^{ \delta } >  (  \bar{g}(t))^{ \delta } \right]\nonumber \\
	 & \le   P_{x} \left[ \Big( \frac{  |\lambda| }{ \gamma ( n)c_{ n}}\Big)^{ \delta }2^{ \delta -1}\left\{ \Big( \sum_{k=1 }^{T_{ 1} } \big|f(Z_k)\big|\Big )^{ \delta } +\Big( \sum_{i=2}^{ I_n+1}\sum_{k=T_{ i-1}+1 }^{T_{ i} } \big|f(Z_k)\big|\Big)^{ \delta }\right\} >  (  \bar{g}(t))^{ \delta } \right]\nonumber \\
	 & \le  P_{x} \left[  \Big( \frac{  |\lambda| }{ \gamma ( n)c_{ n}}\Big)^{ \delta }2^{ \delta -1} \Big( \sum_{k=1 }^{T_{ 1} } \big|f(Z_k)\big|\Big )^{ \delta } > (  \bar{g}(t))^{ \delta }/2\right]\nonumber \\
	 & \hspace{1cm}+\ \ P_{ x}\left[\Big( \frac{  |\lambda| }{ \gamma ( n)c_{ n}}\Big)^{ \delta }2^{ \delta -1}  \Big( \sum_{i=2}^{ I_n+1}\sum_{k=T_{ i-1}+1 }^{T_{ i} } \big|f(Z_k)\big|\Big)^{ \delta } >  (  \bar{g}(t))^{ \delta } /2\right] \nonumber
\end{align}
which yields
\begin{align}\label{eq:konoekjaigay}
	&  P_{x} \left[  g \Big( \frac{ \lambda }{\gamma (n)c_{ n} }\sum_{i=1}^n f(Z_i) \Big)>t \right]\nonumber \\
	 & \le  P_{x} \left[  \Big( \frac{  |\lambda| }{ \gamma ( n)c_{ n}}\Big)^{ \delta }2^{ \delta -1} \Big( \sum_{k=1 }^{T_{ 1} } \big|f(Z_k)\big|\Big )^{ \delta } > (  \bar{g}(t))^{ \delta }/2\right] \\
	 & \hspace{1cm}+\ \ P_{ x}\left[\Big( \frac{  |\lambda| }{ \gamma ( n)c_{ n}}\Big)^{ \delta }2^{ \delta -1}  \Big( \sum_{i=2}^{ I_n+1}\sum_{k=T_{ i-1}+1 }^{T_{ i} } \big|f(Z_k)\big|\Big)^{ \delta } >  (  \bar{g}(t))^{ \delta } /2\right] \nonumber
\end{align}

The Assumption \eqref{eq:Gamma} takes care of the first component of \eqref{eq:konoekjaigay}
\begin{align}\label{eq:part1:id}
	 &   P_{x} \left[ \Big( \frac{  |\lambda| }{ \gamma ( n)c_{ n}}\Big)^{ \delta }2^{ \delta -1} \Big( \sum_{k=1 }^{T_{ 1}  } \big|f(Z_k)\big|\Big )^{ \delta } > (  \bar{g}(t))^{ \delta }/2\right] \nonumber\\
	 & \le  \exp( -k_{ 0}\bar{ g}( t)^{ \delta })E_{ x}  \left[\exp\left\{ \frac{ 1}{k_{ 0} }\Big( \frac{ 2| \lambda  |}{ \gamma ( n)c_{ n}}\Big)^{ \delta }\Big( \sum_{k=1 }^{T_{ a} } \big|f(Z_k)\big|\Big )^{ \delta } \right\}\right]
\end{align}
where $ k_{ 0}$ is as in \eqref{eq:integ}.  

Next we consider the second component of \eqref{eq:konoekjaigay}. Another application of Holder's inequality gives us 
\begin{align}\label{eq:aaroekta}
	 &   P_{ x}\left[\Big( \frac{  |\lambda| }{ \gamma ( n)c_{ n}}\Big)^{ \delta }2^{ \delta -1}  \Big( \sum_{i=2}^{ I_n+1}\sum_{k=T_{ i-1}+1 }^{T_{ i} } \big|f(Z_k)\big|\Big)^{ \delta } >  (  \bar{g}(t))^{ \delta } /2\right]\\
	  & \le  P_{ a}\left[\Big( \frac{  |\lambda| }{ \gamma ( n)c_{ n}}\Big)^{ \delta }2^{ \delta -1}  \Big( \sum_{i=1}^{ I_n+1}\sum_{k=T_{ i-1}+1 }^{T_{ i} } \big|f(Z_k)\big|\Big)^{ \delta } >  (  \bar{g}(t))^{ \delta } /2\right]\nonumber\\
	 & \le  P_{ a}\left[ \Big( \frac{ 2| \lambda  |}{ \gamma ( n)c_{ n}}\Big)^{ \delta }(  I_{ n}+1)^{ \delta -1} \sum_{i=1}^{ I_n+1}\Big(\sum_{k=T_{ i-1}+1 }^{T_{ i} } \big|f(Z_k)\big|\Big)^{ \delta } >  (  \bar{g}(t))^{ \delta } \right]. \nonumber
\end{align}
and using the fact that $ I_{ n}\le n$ for every $ n$ we get the bound
\begin{align}
& P_{ a}\left[ \Big( \frac{ 2| \lambda  |}{ \gamma ( n)c_{ n}}\Big)^{ \delta }(  I_{ n}+1)^{ \delta -1} \sum_{i=1}^{ I_n+1}\Big(\sum_{k=T_{ i-1}+1 }^{T_{ i} } \big|f(Z_k)\big|\Big)^{ \delta } >  (  \bar{g}(t))^{ \delta } \right]\\
	  & \le  \sum_{ l=1}^{ (n+1)/\gamma ( n)} P_{ a}\left[\Big(\frac{   2| \lambda  |}{c_{ n}} \Big)^{ \delta }\frac{ l^{ \delta -1}}{ \gamma ( n)} \sum_{i=1}^{ l\gamma ( n)}\Big(\sum_{k=T_{ i-1}+1 }^{T_{ i} } \big|f(Z_k)\big|\Big)^{ \delta } >  (  \bar{g}(t))^{ \delta },  l-1< \frac{ I_{ n}+1}{ \gamma ( n)}\le l\right]. \nonumber
\end{align}
Applying Holder's inequality for the third time yields for every $ l\ge 1$
\begin{align}\label{eq:hoytoshesh}
	  &   P_{ a}\left[\Big(\frac{   2| \lambda  |}{c_{ n}} \Big)^{ \delta }\frac{ l^{ \delta -1}}{ \gamma ( n)}  \sum_{i=1}^{ l\gamma ( n)}\Big(\sum_{k=T_{ i-1}+1 }^{T_{ i} } \big|f(Z_k)\big|\Big)^{ \delta } >  (  \bar{g}(t))^{ \delta },  l-1< \frac{ I_{ n}+1}{ \gamma ( n)}\le l\right]\\
	  & \le  P_{ a}\left[\Big(\frac{   2| \lambda  |}{c_{ n}} \Big)^{ \delta }\frac{ l^{ \delta -1}}{ \gamma ( n)} \sum_{i=1}^{ l\gamma ( n)}\Big(\sum_{k=T_{ i-1}+1 }^{T_{ i} } \big|f(Z_k)\big|\Big)^{ \delta } >  (  \bar{g}(t))^{ \delta }\right]^{ 1/p} P_{ a}\left[  l-1< \frac{ I_{ n}+1}{ \gamma ( n)}\le l \right]^{ 1-1/p}\nonumber\\
	 & =R^{ 1/p}S^{ 1-1/p} \ \ \ \ \mbox{(Say)} \nonumber
\end{align}
where $ p>1$. We consider the parts $ R$ and $ S$ separately. To part $ R$ we  apply an exponential Markov inequality to obtain the bound
\begin{align*}
	 & R=  P_{ a}\left[ \Big(\frac{   2| \lambda  |}{c_{ n}} \Big)^{ \delta }\frac{ l^{ \delta -1}}{ \gamma ( n)}  \sum_{i=1}^{ l\gamma ( n)}\Big(\sum_{k=T_{ i-1}+1 }^{T_{ i} } \big|f(Z_k)\big|\Big)^{ \delta } >  (  \bar{g}(t))^{ \delta }\right]\\
	 & \le   l \gamma ( n) \Gamma \left(\Big(\frac{   2| \lambda  |}{c_{ n}} \Big)^{ \delta }\frac{ l^{ \delta -1}}{ \gamma ( n)}  \right) \exp\left( -k_{ 0}p\bar{ g}( t)^{ \delta }\right) 
\end{align*}
where 
\[ 
 	\Gamma( t ):=\log E_{ a}  \Big[ \exp  \Big(  t \sum_{k=1 }^{ T}\big| f( Z_{ k}) \big|  \Big)^{ \delta } \Big]   .
\]
Since  $ \delta <( 1-\alpha )^{ -1}$ and  $ l$ is atmost $ n/\gamma ( n)$ for any $ n\ge 1$ and hence  there exists $ K_{ 1}>0$ such that 
\[ 
 	\frac{ l^{ \delta -1}}{\gamma ( n) }   \le \frac{ n^{ \delta -1}}{\gamma ( n)^{ \delta } }\le K_{ 1} \ \ \ \ \mbox{ for all } n\ge 1.
\]
Now observe that  $ \Gamma $ is convex and $ \Gamma ( 0)=0$. Under the conditions of Theorem \ref{thm:main}, $ \Gamma (t)< \infty$ for all $ t>0$ and  we can get $ K_{ 2}>0$ such that 
\[ 
 	\Gamma ( t)\le K_{ 2}t \ \ \ \ \mbox{ for all }   0\le t\le (  2| \lambda  |)^{ \delta }K_{ 1}/k_{ 0}p.
\]
If the conditions of Theorem \ref{thm:moderate} hold then there exists $ t_{ 0}>0$ such that $ \Gamma (t)< \infty$ for all $ 0<t\le t_{ 0}$. In this case also there exists $ K_{ 2}>0$ such that 
\[ 
 	\Gamma ( t)\le K_{ 2}t \ \ \ \ \mbox{ for all }   0\le t\le t_{ 0},
\]
and furthermore there exists $ K\ge 1$ such that $ \Big(\frac{   2| \lambda  |}{c_{ n}} \Big)^{ \delta }K_{ 1}\le t_{ 0} $ for all $ n\ge K$.
Therefore, under the conditions of Theorem \ref{thm:moderate} or Theorem \ref{thm:main} for any $ n\ge K$ and $ l\le n/\gamma (n)$ we have 
\begin{eqnarray}\label{eq:partn}
	 &  & P_{ a}\left[ \Big(\frac{   2| \lambda  |}{c_{ n}} \Big)^{ \delta }\frac{ l^{ \delta -1}}{ \gamma ( n)} \sum_{i=1}^{ l\gamma ( n)}\Big(\sum_{k=T_{ i-1}+1 }^{T_{ i} } \big|f(Z_k)\big|\Big)^{ \delta } >  (  \bar{g}(t))^{ \delta }\right] \nonumber \\
	 & \le &   l \gamma ( n)K_{ 2}\frac{ 1}{k_{ 0}p }\Big(\frac{   2| \lambda  |}{c_{ n}} \Big)^{ \delta }\frac{ l^{ \delta -1}}{ \gamma ( n)}  \exp( -k_{ 0}p\bar{ g}( t)^{ \delta }) \nonumber \\
	 & = & K_{ 2} \frac{ 1}{k_{ 0}p } \Big( \frac{ 2| \lambda  |}{c_{ n}}\Big)^{ \delta } l^{ \delta } \exp( -k_{ 0}p\bar{ g}( t)^{ \delta }) .
\end{eqnarray}
Now we concentrate on  part $ R$. We use the duality between the variables $ (T_{ n})$ and $ (I_{ n})$  to get 
\[
	 P_{ a}  \left[ I_{ n} \ge l\gamma ( n) \right] =  P_{ a}  \left[ T_{ [ l \gamma ( n)]} \le n \right]	  \le  P_{ a}  \left[ T_{ l[ \gamma ( n)]} \le n\right]. 
\]
Define the variables $ W_{ k}:=T_{ k}-T_{ k-1}$ for $ k\ge 1.$ Since $ a$ is an atom, $ (   W_{ k} )$ is a sequence of i.i.d. random variables. For any $ x>1/n$
\begin{eqnarray*}
	P\big[ W_{ 1}+\cdots+W_{ [ \gamma ( n)]}\le nx\big] & \le & P\left [ \max_{ 1\le i\le [ \gamma ( n)]}W_{ i}\le nx\right]\\
	&= &  \left( 1- \frac{ 1}{b\pi( C)\gamma ( nx) } \right)^{ [ \gamma ( n)]}
\end{eqnarray*}
There exists $ c_{ 1}>0$  such that for any $ n\ge 1$ and $ 1/n<x\le 2,$
\begin{equation}
		P\big[ W_{ 1}+\cdots+W_{ [ \gamma ( n)]}\le nx\big] \le    \exp \left( -c_{ 1} \frac{ [ \gamma ( n)]}{\gamma ( nx) } \right)
\end{equation}
Fix $ \epsilon >0$ such that $ \alpha -\epsilon >0$. Using Potter bounds (Theorem 1.5.6 in \cite{bingham:goldie:teugels:1989}) we  get $ c_{ 2}>1$ such that for $ x_{ 1}>x_{ 2}>1$
\begin{equation}
	 c_{ 2} \left( \frac{ x_{ 1}}{x_{ 2} } \right)^{ \alpha -\epsilon  }\le  \frac{ \gamma ( x_{ 1})}{\gamma( x_{ 2}) } .
\end{equation}
Hence, it is easy to get $ c_{ 3}>0$ such that for all $ 1/n<x\le 2,$
\begin{equation}\label{eq:headprob}
	P\big[ W_{ 1}+\cdots+W_{ [ \gamma ( n)]}\le nx\big] \le   \exp \left( -c_{ 3} x^{ \alpha -\epsilon } \right).
\end{equation}
Now if $ V^{ n}_{ k}:=( W_{ ( k-1)[ \gamma ( n)]+1}+\cdots+W_{ k[ \gamma ( n)]})/n$ then 
\[ 
 	P \big[  T_{ l[ \gamma ( n)]}\le n \big]   = P \big[  V^{ n}_{ 1}+\cdots+V^{ n}_{ l}\le 1 \big]. 
\]
By \eqref{eq:headprob} there exists $ \sigma >0$ such that for any $ 0<x\le 2$
\[ 
 	P  \Big[ \frac{ 1}{n }+ V^{ n}_{ l}\le x \Big]  \le P \big[\sigma   S_{ \alpha -\epsilon }\le x \big] 
\]
where $ S_{ \alpha -\epsilon }$ is a right skewed $ ( \alpha -\epsilon )$-stable random variable satisfying 
\[ 
 	   E \big[  \exp ( -t S_{ \alpha -\epsilon }) \big] = \exp ( -t^{ \alpha -\epsilon }) \ \ \ \ \mbox{ for all } t>0.
\]
Using the fact that $l\le n/\gamma ( n) $ for any $ n\ge 1$
\begin{eqnarray*}
	 P \big[  T_{ l[ \gamma ( n)]}\le n \big] &  = &P \big[  V^{ n}_{ 1}+\cdots+V^{ n}_{ l}\le 1 \big] \\
	 & \le & P \Big[\Big(  \frac{ 1}{n }+V^{ n}_{ 1}\Big)  +\cdots + \Big( \frac{ 1}{n }+V^{ n}_{ l} \Big) \le 2 \Big] \\
	 & \le & P \big[  S_{ \alpha -\epsilon }l^{1(  \alpha -\epsilon) }\le 2/\sigma  \big] 
\end{eqnarray*}
By Theorem 2.5.3 in \cite{zolotarev:1986} there exists $ c_{ 4}>0$ and $ c_{ 5}>0$ such that 
\begin{equation}\label{eq:partlast}
	P\big[  T_{ l[ \gamma ( n)]}\le n \big] \le c_{ 4}\exp \left( -c_{ 5}l^{ \frac{ 1}{1-\alpha +\epsilon  }} \right).
\end{equation}
Therefore, by combining \eqref{eq:aaroekta}-\eqref{eq:partn} and \eqref{eq:partlast} we get 
\begin{eqnarray}\label{eq:partn:id}
	&  & P_{ x}\left[\frac{  |\lambda|^{ \delta } }{ \gamma ( n)^{ \delta }}2^{ \delta -1}  \Big( \sum_{i=2}^{ I_n+1}\sum_{k=T_{ i-1}+1 }^{T_{ i} } \big|f(Z_k)\big|\Big)^{ \delta } >  (  \bar{g}(t))^{ \delta } /2\right] \\
	& \le & \exp( -k_{ 0}\bar{ g}( t)^{ \delta }) \sum_{ l=1}^{ \infty } K_{ 2} \frac{ 1}{k_{ 0}p } (  2| \lambda  |)^{ \delta } l^{ \delta } c_{ 4}\exp \left( - c_{ 5}( 1-1/p)l^{  \frac{ 1}{1-\alpha +\epsilon  }}\right)\nonumber
\end{eqnarray}
The series in \eqref{eq:partn:id} surely  converges to a  finite number and hence by \eqref{eq:part1:id} and \eqref{eq:partn:id}  there exists a constant $ C>0$ such that 
\[
	  P_{x} \left[  g \Big( \frac{ \lambda }{\gamma (n)c_{ n} }\sum_{i=1}^n f(Z_i) \Big)>t \right] \le C  \exp( -k_{ 0}\bar{ g}( t)^{ \delta })
\]
Finally,
\begin{eqnarray*}
	&  & E_{x} \left[ g \Big( \frac{ \lambda }{\gamma (n) }\sum_{i=1}^n f(Z_i) \Big)I_{ \big[ g \big( \frac{ \lambda }{\gamma (n) }\sum\limits_{i=1}^n f(Z_i) \big)>N\big]}\right]\\
	& = & \int_N^\infty P_{x} \left[  g \Big( \frac{ \lambda }{\gamma (n) }\sum_{i=1}^n f(Z_i) \Big)>t \right]dt+NP_{x} \left[  g \Big( \frac{ \lambda }{\gamma (n) }\sum_{i=1}^n f(Z_i) \Big)>N \right]
\end{eqnarray*}
and that converges $ 0 $ as $ N\to \infty $ by  Assumption \eqref{eq:integ}.
\end{proof}

\end{section}

\begin{section}{Examples of Long Range Dependent ID Processes}\label{sec:exam} 

 \begin{example}[Simple symmetric random walk on $ \mathbb{Z}$]\label{eg:rw1dim} 
 Suppose $ E=\mathbb{Z}$ and $ \mathcal{E}$ is the power set of $ E$. Let $ ( Z_{ n})$ be the simple symmetric random walk on $ \mathbb{Z}$, i.e., it is a markov chain with transition kernel 
 \[ 
 	p( i,j)=\left\{  \begin{array}{ll}  1/2  &\mbox{if } j=i+1 \mbox{  or } j=i-1\\ 0 & \mbox{otherwise.} \end{array}  \right.  
\] 
Then the  counting measure $ \pi$ on $(  E,\mathcal{E})$ is an invariant measure for this  kernel $ p( \cdot,\cdot)$. Here, we can take $ E_{ n}=\{   -n,n  \}$ and $ E_{ 0}={ 0},$ which means $ \zeta=0.$ Furthermore, $ a=\{   0  \}$ is an atom for $ ( Z_{ n}).$ From the arguments proving Proposition 2.4 in \cite{legallrosen:1991} we get
\[ 
 	\gamma ( n)\sim \sum_{ k=1}^{ n}P_{ 0}[ X_{ k}=0] \sim \sqrt{ n}  \sqrt{ 2/\pi}\in RV_{ 1/2}. 
\]
and hence $ \alpha =1/2$. Also it is easy to observe that 
\[ 
 	   P_{ \pi_{ n}}\big[  T \in \cdot \big] \stackrel{ d}{ =} \sum_{ k=1}^{ n} W_{ k} 
\]where $ (W_{ k})$ are i.i.d. with distribution $ P[W_{ 1}\in \cdot]= P_{ \pi_{ 1}}[T\in \cdot]$. So the hitting time of $ a$ from $ E_{ n}$ can be expressed as a sum of $ n$ i.i.d. random variables. Using this fact it is also easy to check the well known result
\[ 
 	P_{ \pi_{ n}}\big[  T/n^{ 2}\in \cdot \big] \Rightarrow S_{ 1/2},    
\]
where $ S_{ 1/2}$ is a right-skewed $ 1/2$-stable distribution with density
\[ 
 	h( x)=\frac{ 1}{\sqrt{ 2\pi} }   x^{ -3/2}\exp \big\{ -( 2x)^{ -1} \big\}, \ \ \ \ \forall x>0.
\]Therefore, $ \psi( n)=n^{ 2}$, $ \beta =2$ and $ Q( \cdot)$ is the law of $ S_{ 1/2}$. By the arguments used to prove the statement \eqref{eq:partlast} and the representation of the distribution $ T$ under $ P_{ \pi_{ n}}$ it is possible to check that  assumption \eqref{eq:Qassmp} is satisfied. Now suppose that $ \rho( \cdot)$ is a L\'evy measure on $ ( \mathbb{R},\mathcal{B}( \mathbb{R}))$ such that 
\[ 
 	g( \lambda )=\int_{ \mathbb{R}}\big( e^{ \lambda z}-1 -\lambda \llbracket z \rrbracket\big)    \rho( dz)< \infty ,\ \ \ \ \forall \lambda \in \mathbb{R},
\]
and 
\begin{equation}\label{eq:ex1:d} 
 	\int_{ 0}^{ \infty} \exp \big( -k_{ 0}\bar g( t)^{ \delta } \big)dt< \infty ,    
\end{equation}
for some $ \delta <2$ and $ k_{ 0}>0$ where $ \bar{ g}( t)=\min \{   | s |:g( s)=t  \}.$ Let $ M$ be an IDRM on $ S$ with control measure $ m$ defined in \eqref{eq:contrmeas} and local characteristics $ (0,\rho,0)$.

\textit{Moderate Deviation Principle:} Suppose $ f:\mathbb{Z}\to \mathbb{R}$ is a function that satisfies conditions F and Assumption \ref{assmp:integ:g:f}.  Define the ID process $ X_{ n}=\int_{ S}f(s_{ n})M(ds)$ and assume that $ E(X_{ 1})=0$ (this is an assumption on $ \rho$ and $ f$) and $ var(X_{ 1})=\sigma ^{ 2}$. If \eqref{eq:ex1:d}  holds with  $ \delta =1$  then any function with bounded support satisfies those conditions, but if $ \delta >1$ then the only choice for $ f$ is of the form $ cI_{ 0}(x)$ for some $ c\ne 0$. Suppose $ c_{ n}\to \infty$ is such that $ \sqrt{n}/c_{ n}^{ 2}\to \infty$ and let $ \mu_{ n}$ be the law of
\[ 
 	Y_{ n}( t)= \frac{ c_{ n}}{n }\sum_{ i=1}^{ [ nt]}X_{ i}, \ \ \ \ t\in [ 0,1].   
\]
in $ \mathcal{BV}$. Then $ (   \mu_{ n}  )$ satisfies LDP in $ \mathcal{BV}$ with speed $ \sqrt{ n}/c_{ n}^{ 2}$ and good rate function 
\begin{equation}
	H_{ m}( \xi)= \left\{  \begin{array}{cc}  \Lambda^{ *}_{ m}( \xi^{ \prime })   & \mbox{ if } \xi\in \mathcal{AC}, \xi( 0)=0\\ \infty & \mbox{ otherwise.} \end{array} \right.
\end{equation}
where for any $ \varphi\in L_{ 1}[ 0,1]$
\begin{eqnarray}
	 \Lambda^{ *}_{ m}( \varphi)&=  & \sup_{ \psi\in L_{ \infty }[ 0,1]}\left\{ \int_{ 0}^{ 1} \psi( t)\varphi( t) dt \right.   \nonumber \\
	 & & \left.-  \int_{0}^{ \infty }2E  \Big[ I_{[  r^{ 2 }S^{ *}_{ 1/2} \le 1]} \frac{ \sigma ^{ 2}}{2 } \Big(c_{ f}\sqrt{ \frac{ \pi}{ 2}}   \int_{ 0}^{1- r^{ 2 }S^{ *}_{ 1/2} } \psi( t) U( dt) \Big)^{ 2} \Big]dr \right\}.
\end{eqnarray}
Here  $    U( t):=\inf\{  x: S_{ 1/2 }( x)\ge t  \},0\le t\le 1,  $ is the inverse time $ 1/2$-stable subordinator, where
\[ 
 	 E  \left\{ \exp \left(- \lambda  S_{ 1/2 }( 1) \right) \right\}= \exp  \left\{ - \frac{2 }{\sqrt{ \pi} } \lambda ^{ 1/2 }\right\},\forall \lambda \ge 0,   
\]
and $ S_{ 1/2}^{ *} $  is independent of $ \{   U( t):0\le t\le 1  \}$ having the same distribution as $ S_{ 1/2}( 1)$.

\textit{Large Deviation Principle:} The only functions $ f$ which satisfy the conditions of Theorem \ref{thm:main} is of the form $ cI_{ 0}(x)$ where $ c\ne 0$. Then the law of 
\[ 
  	Y_{ n}( t)= \frac{ 1}{n }\sum_{ i=1}^{ [ nt]}X_{ i}, \ \ \ \ t\in [ 0,1].   
\]
satisfies LDP in $ \mathcal{BV}$ with speed $ \sqrt{n}$ and good rate function 
\begin{equation}
	H( \xi)= \left\{  \begin{array}{cc}  \Lambda^{ *}( \xi^{ \prime })   & \mbox{ if } \xi\in \mathcal{AC}, \xi( 0)=0\\ \infty & \mbox{ otherwise.} \end{array} \right.
\end{equation}
where for any $ \varphi\in L_{ 1}[ 0,1]$
\begin{eqnarray}
	 \Lambda^{ *}( \varphi)&=  & \sup_{ \psi\in L_{ \infty }[ 0,1]}\left\{ \int_{ 0}^{ 1} \psi( t)\varphi( t) dt \right.   \nonumber \\
	 & & \left.-  \int_{0}^{ \infty }2E  \Big[ I_{[  r^{ 2 }S^{ *}_{ 1/2} \le 1]} g \Big(c_{ f}\sqrt{ \frac{ \pi}{ 2}}   \int_{ 0}^{1- r^{ 2 }S^{ *}_{ 1/2} } \psi( t) U( dt) \Big) \Big]dr \right\}.
\end{eqnarray}
 \end{example}

 \begin{example}\label{eg:hittime} 
 Suppose that  $ (Z_{n})$ is a Markov chain on $ E=\mathbb{Z}_{+}$ with transition probabilities
 \begin{equation}\label{eq:trans}
	P(i,j)= \left\{ \begin{array}{ll}  p_{ i}q_{ i}  & \text{ if } i\neq 0, j=i+1\\ p_{ i}( 1-q_{ i}) & \text{ if }i\neq 0, j=0 \\  1-p_{ i} &  \text{ if }j=i\\ 1 & \text{ if }i=0,j=1\\ 0 & \text{otherwise}\end{array} \right.
\end{equation}
where $ ( p_{ n})$ and $ ( q_{ n})$ are two sequences of real numbers between $ 0$ and $ 1$ ($ p_{ 0}=1, q_{ 0}=1$). The Markov chain $ ( Z_{n} )$ is  irreducible and if $\prod_{ k=0}^{ \infty }q_{ n}=0 $ then $ (Z_{ n})$ is recurrent.  Whenever the chain hits a state $i$, it stays there for $\tau_{i}$ amount of time where $ \tau_{i}\sim$ geometric($ p_{ i}$). When it leaves $i$,  it jumps to $i+1$ with probability $ q_{ i}$ or goes to $ 0$ with probability $ 1-q_{ i}$.  Therefore, given that $X_{0}=0$, we can write
\[ 
 	T\stackrel{d}{=}1+ \sum_{k=1}^{N}\tau_{k},  
\] 
where $ N$ is a random variable independent of  $ (\tau_{ k},k\ge 1)$  and having distribution 
\[ 
 	P[ N=n]= q_{ 1}\cdots q_{n-1 }( 1-q_{ n}), \ \ \ \ \mbox{ for every } n\ge 1.   
\]
Clearly that means $ P_{0}[ T< \infty  ]=P_{ 0}[ N< \infty ]= 1$ if  $  \prod_{ k=0}^{ \infty }q_{ n}=0.$ In that case it is also easy to check that
\[
	 \pi(  0)=1 \mbox{ and }  \pi(  n)=q_{ 1}\cdots q_{ n-1}/p_{ n}, \ \ \ \ \mbox{ for all } n\ge 1
\] 
is an invariant measure for this Markov chain.

 Consider a model where $ (p_{ n})$ and $ (q_{ n})$ are of the form 
\[ 
 	p_{ n}= \frac{ 1}{( n+1)^{ s} } \ \ \ \ \mbox{ and } \ \ \ \  q_{ n}= \Big( \frac{ n}{n+1 }\Big)^{ t} \ \ \ \ \mbox{ for every } n\ge 1.   
\]
Assume $ s>0$ and $ t>0$ satisfies $ 1/2<t-s<1$.  In this setup it is easy to check that $ (Z_{ n})$ is a recurrent Markov chain. Also observe that
\begin{equation}\label{eq:dist:N} 
 	P\big[  N=n \big]\sim     \frac{ t}{n^{ t+1} }\in RV_{ -( t+1)} \ \ \ \ \mbox{and} \ \ \ \  P\big[  N> n \big]   \sim \frac{ 1}{  n^{ t}} \in RV_{ -t}.
\end{equation}
Therefore
\[ 
 	E_{ 0}[ T] = 1+\sum_{ n=1}^{ \infty }P[ N=n] \sum_{ k=1}^{ n} \frac{ 1}{p_{ k} }= \infty,
\]
 which means that $ (Z_{ n})$ is a null-recurrent Markov chain. We can take $ a=\{   0  \}$ to be an atom. Next we find $ \alpha $ by estimating the tail probability of the random variable $ T$. Note that
\[ 
 	T\stackrel{ d}{ =}1+\sum_{ k=1}^{ N}\tau_{ k}= \sum_{ k=1}^{ N}\big( \tau_{ k}-(k+1)^{ s}\big) +h( N)
\]
where $ h$ is defined as 
\[ 
 	h( n):=1+ \sum_{ k=1}^{ n} ( k+1)^{ s} \sim \frac{ 1}{s+1 }n^{ s+1}\in RV_{ s+1}.   
\]
From  \eqref{eq:dist:N} it follows that  $ P[ h( N)>n]\sim \big( ( s+1) n\big)^{-t/( s+1) }\in RV_{- t/( s+1)}$. If we can show that $ \sum_{ k=1}^{ N}( \tau_{ k}-( k+1)^{ s})$ has a lighter tail then it would follow that $ P[ T>n]\in RV_{ -t/( s+1)}$ which would in turn imply $ \alpha =t/( s+1)$. For that purpose observe that $ t/( s+1)<1<t/( s+1/2)$.  Then by Cauchy Schwartz inequality
\begin{eqnarray*}
	E \left[\Big|  \sum_{ k=1}^{ N}\Big( \tau_{ k}-( k+1)^{ s}\Big) \Big|\right]& \le  &  E  \left[  \sum_{ k=1}^{ N}E\Big( \tau_{ k}-( k+1)^{ s} \Big)^{ 2}  \right]^{ 1/2}
\end{eqnarray*}
Since $ Var( \tau_{ k})=( k+1)^{ 2s}$ we get 
\begin{eqnarray*}
	E \left[ \Big|  \sum_{ k=1}^{ N}\Big( \tau_{ k}-( k+1)^{ s}\Big)\Big|\right] &\le   & E  \left[  \sum_{ k=1}^{ N}( k+1)^{ 2s} \right]^{ 1/2}\\
	& \le &cE  \left[  N^{ 2s+1} \right] ^{ 1/2}\\
	& = &c E  \left[ N^{ s+1/2} \right]< \infty .
\end{eqnarray*}
where $ c>0$ is a constant such that 
\[ 
 	\sum_{ k=1}^{ n}( k+1)^{ 2s}\le c n^{ 2s+1} \ \ \ \ \mbox{  for all }n\ge 1.   
\]
Therefore we have
\[ 
 	\gamma( n) =   ( s+1)  n^{ t/( s+1)}.
\]
A natural partition in to take take $ E_{ n}=\{   n  \}$ for every $ n\ge 0$ and 
\[ 
 	\pi( n)= \frac{ ( n+1)^{ s}}{n^{ t} }\in RV_{ s-t}   
\]
which means $ \zeta =s-t>-1.$ Next we  claim that $ \beta=s+1$. Note that given $ X_{ 0}=n$
\[ 
 	T \stackrel{ d}{= } \sum_{ k=0}^{ N_{ n}}\tau_{ n+k}    
\]
where $ N_{ n}$ is a random variable such that for $ k\ge 1$
\[ 
 	P   \big[  N_{ n} =k \big] = q_{ n}\cdots q_{ n+k-1}( 1-q_{ n+k}) =P \big[  N=n+k \big|N\ge n\big] 
\]
It follows immediately that 
\begin{equation}\label{eq:Nn}
	P\big[  N_{ n}/n >x \big] \sim \frac{ 1}{( 1+x)^{ t} } \ \ \ \ \mbox{ for all }x>0.
\end{equation}
Following the same argument as above, using Cauchy Schwartz inequality it is easy to check that 
\begin{equation}\label{eq:centered} 
 	E  \left|\frac{ 1}{n^{ s+1} }  \sum_{ k=0}^{ N_{ n}}\Big( \tau_{ n+k}-( n+k)^{ s}\Big) \right|   \longrightarrow 0.
\end{equation}
Therefore for any $ x>0$
\begin{eqnarray*}
	\lim_{ n \rightarrow \infty   } P_{ \pi_{ n}}  \left[ T/n^{ s+1} >x \right]  & = & \lim_{ n \rightarrow \infty   } P  \left[ \frac{ 1}{ n^{ s+1}} \sum_{ k=0}^{ N_{ n}}( n+k)^{ s}>x\right]\\
	& = & \lim_{ n \rightarrow  \infty } P\left[ \frac{ 1}{n^{ s+1} }\Big(  h ( n+N_{ n})-h( n-1)\Big)>x\right]\\
	& = & \lim_{ n \rightarrow \infty   }P  \left[ \Big( 1+ \frac{ N_{ n}}{n } \Big)^{ s+1}>1+( s+1)x  \right] \\
	& = & \big( 1+( s+1)x \big)^{- \frac{ t}{s+1 }}.
\end{eqnarray*}
Hence $ \psi( n)=n^{ s+1}$ and $ Q( \cdot)$ is a measure on $ ( \mathbb{R}_{ +},\mathcal{B}( \mathbb{R}_{ +}))$ such that 
\[ 
 	Q( ( x, \infty ))   =\big( 1+( s+1)x \big)^{- \frac{ t}{s+1 }} \ \ \ \ \mbox{ for all } x>0.
\]
Next we ensure that assumption \eqref{eq:Qassmp} is satisfied. Fix any $ 0<\epsilon <t$. Then for any $ r\ge 1$
\begin{eqnarray*}
	  P_{ \pi_{ n}}  \left[ \frac{ T}{n^{ s+1} }\le cr^{ -s-1+\epsilon } \right]
	 & = & P  \left[ \frac{ 1}{n^{ s+1} }\sum_{ k=0}^{ N_{ n}}\tau_{ n+k}\le c r^{ -s-1+\epsilon }\right]\\
	 & \le & P  \left[ \frac{ 1}{n^{ s+1} }\sum_{ k=0}^{ N_{ n}^{ \prime}}W_{ k}\le c r^{ -s-1+\epsilon } \right]
\end{eqnarray*}
where $ (W_{ k}, k\ge 0)$ are i.i.d geometric($ p_{ n}$) and $ N_{ n}^{ \prime }$ is geometric($ 1-q_{ n}$) and is independent of $ (W_{ k})$. Now 
\[ 
 	 \sum_{ k=0}^{ N_{ n}^{ \prime}}W_{ k} \sim \text{ geometric}(p_{ n}( 1-q_{ n}))   
\]
and therefore it is possible to get $ c^{ \prime }>0$  and $ K>1$ such that
\begin{eqnarray*}
	\sup_{ n\ge Kr}  P  \left[ \frac{ 1}{n^{ s+1} }\sum_{ k=0}^{ N_{ n}^{ \prime}}W_{ k}\le c r^{ -s-1+\epsilon } \right]
	 & \le & 1-\exp  \big\{ c^{ \prime } cr^{ -s-1+\epsilon }n^{ s+1}p_{ n}( 1-q_{ n}) \big\}.
\end{eqnarray*}
Observe that $n^{ s+1}p_{ n}( 1-q_{ n})\to t $ as $ n\to \infty $. Hence one can get $ c^{ \prime \prime }>0$  such that 
\[ 
 	\sup_{ n\ge Kr}  P  \left[ \frac{ 1}{n^{ s+1} }\sum_{ k=0}^{ N_{ n}^{ \prime}}W_{ k}\le c r^{ -s-1+\epsilon } \right]\le c^{ \prime \prime }r^{ -s-1+\epsilon }.   
\]
Since $ \zeta=s-t$ and we assumed that $ t>\epsilon $, we see that \eqref{eq:Qassmp} is satisfied.  

Now suppose that $ \rho( \cdot)$ is a L\'evy measure on $ ( \mathbb{R},\mathcal{B}( \mathbb{R}))$ such that 
\[ 
 	g( \lambda )=\int_{ \mathbb{R}}\big( e^{ \lambda z}-1 -\lambda \llbracket z \rrbracket\big)    \rho( dz)< \infty ,\ \ \ \ \forall \lambda \in \mathbb{R},
\]
and \eqref{eq:ex1:d} holds for some  $ \delta <( s+1)/( s-t+1)$.  Let $ M$ be an IDRM on $ S$ with control measure $ m$ defined in \eqref{eq:contrmeas} and local characteristics $ (0,\rho,0)$.

\textit{Moderate Deviation Principle:} Suppose $ f:\mathbb{Z}\to \mathbb{R}$ is a function that satisfies conditions F and Assumption \ref{assmp:integ:g:f}.  Define the ID process $ X_{ n}=\int_{ S}f(s_{ n})M(ds)$ and assume that $ E(X_{ 1})=0$  and $ var(X_{ 1})=\sigma ^{ 2}$. Here also if \eqref{eq:ex1:d}  holds with  $ \delta =1$  then any function with bounded support satisfies those conditions, but if $ \delta >1$ then the only choice for $ f$ is of the form $ cI_{ 0}(x)$ for some $ c\ne 0$. From the computations above we see that 
\[ 
 	\pi( E_{ [ \psi^{ \leftarrow }( n)]})\gamma ( n)\psi^{ \leftarrow }( n)\sim ( s+1)n \ \ \ \ \mbox{and} \ \ \ \    \pi( E_{ [ \psi^{ \leftarrow }( n)]})\psi^{ \leftarrow }( n)\sim n^{ \frac{ s-t+1}{s+1 }}.
\]

Suppose $ c_{ n}\to \infty$ is such that $ n^{ \frac{ s-t+1}{s+1 }}/c_{ n}^{ 2}\to \infty$ and let $ \mu_{ n}$ be the law of
\[ 
 	Y_{ n}( t)= \frac{ c_{ n}}{n }\sum_{ i=1}^{ [ nt]}X_{ i}, \ \ \ \ t\in [ 0,1].   
\]
in $ \mathcal{BV}$. Then $ (   \mu_{ n}  )$ satisfies LDP in $ \mathcal{BV}$ with speed $  n^{ \frac{ s-t+1}{s+1 }}/c_{ n}^{ 2}$ and good rate function 
\begin{equation}
	H_{ m}( \xi)= \left\{  \begin{array}{cc}  \Lambda^{ *}_{ m}( \xi^{ \prime })   & \mbox{ if } \xi\in \mathcal{AC}, \xi( 0)=0\\ \infty & \mbox{ otherwise.} \end{array} \right.
\end{equation}
where for any $ \varphi\in L_{ 1}[ 0,1]$
\begin{eqnarray}
	 \Lambda^{ *}_{ m}( \varphi)&=  & \sup_{ \psi\in L_{ \infty }[ 0,1]}\left\{ \int_{ 0}^{ 1} \psi( t)\varphi( t) dt \right.   \nonumber \\
	 & & \left.-  \int_{0}^{ \infty }2E  \Big[ I_{[  r^{ 2 }S^{ *}_{ 1/2} \le 1]} \frac{ \sigma ^{ 2}}{2 } \Big( \frac{ c_{ f}}{s+1 }   \int_{ 0}^{1- r^{ 2 }S^{ *}_{ 1/2} } \psi( t) U( dt) \Big)^{ 2} \Big]dr \right\}.
\end{eqnarray}
Here  $    U( x):=\inf\{  y: S_{ t/( s+1) }( y)\ge x  \},0\le x\le 1,  $ is the inverse time $ t/( s+1)$-stable subordinator and $V $  is independent of $ (   U( x):0\le x\le 1  )$ having distribution $ Q( \cdot)$.

\textit{Large Deviation Principle:} The only functions $ f$ which satisfy the conditions of Theorem \ref{thm:main} is of the form $ cI_{ 0}(x)$. Then the law of 
\[ 
  	Y_{ n}( t)= \frac{ 1}{n }\sum_{ i=1}^{ [ nt]}X_{ i}, \ \ \ \ t\in [ 0,1].   
\]
satisfies LDP in $ \mathcal{BV}$ with speed $ n^{ \frac{ s-t+1}{s+1 }}$ and good rate function 
\begin{equation}
	H( \xi)= \left\{  \begin{array}{cc}  \Lambda^{ *}( \xi^{ \prime })   & \mbox{ if } \xi\in \mathcal{AC}, \xi( 0)=0\\ \infty & \mbox{ otherwise.} \end{array} \right.
\end{equation}
where for any $ \varphi\in L_{ 1}[ 0,1]$
\begin{eqnarray}
	 \Lambda^{ *}( \varphi)&=  & \sup_{ \psi\in L_{ \infty }[ 0,1]}\left\{ \int_{ 0}^{ 1} \psi( t)\varphi( t) dt \right.   \nonumber \\
	 & & \left.-  \int_{0}^{ \infty }2E  \Big[ I_{[  r^{ 2 }S^{ *}_{ 1/2} \le 1]} g \Big( \frac{ c_{ f}}{s+1 }  \int_{ 0}^{1- r^{ 2 }S^{ *}_{ 1/2} } \psi( t) U( dt) \Big) \Big]dr \right\}.
\end{eqnarray}

\end{example}

\end{section}

\section*{Acknowledgements} The author is thankful to Prof. Gennady Samorodnitsky for his comments, numerous helpful discussions and reading through a previous draft of the paper in details. The author is thankful to Prof. Jan Rosinski for teaching a course on Infinitely Divisible processes at Cornell University and for many helpful discussions on this topic. The material described in Section \ref{sec:backgr} is derived from that course.

\bibliographystyle{imsart-nameyear}
\bibliography{/Users/Souvik/Documents/Chronicles/Miscellanea/Latex/bibfile}

\begin{thebibliography}{36}

\bibitem[\protect\citeauthoryear{Alparslan and
  Samorodnitsky}{2007}]{alparslan2007rpc}
\begin{barticle}[author]
\bauthor{\bsnm{Alparslan},~\bfnm{Ugur~T.}\binits{U.~T.}} \AND
  \bauthor{\bsnm{Samorodnitsky},~\bfnm{Gennady}\binits{G.}}
(\byear{2007}).
\btitle{Ruin Probability with Certain Stationary Stable Claims Generated by
  Conservative Flows}.
\bjournal{Advances in Applied Probability}
\bvolume{39}
\bpages{360-384}.
\end{barticle}
\endbibitem

\bibitem[\protect\citeauthoryear{Athreya and Ney}{1978}]{athreya:ney:1978}
\begin{barticle}[author]
\bauthor{\bsnm{Athreya},~\bfnm{Krishna~B.}\binits{K.~B.}} \AND
  \bauthor{\bsnm{Ney},~\bfnm{Peter}\binits{P.}}
(\byear{1978}).
\btitle{A New Approach to the Limit Theory of Recurrent {M}arkov Chains}.
\bjournal{Transactions of the American Mathematical Society}
\bvolume{245}
\bpages{493-501}.
\end{barticle}
\endbibitem

\bibitem[\protect\citeauthoryear{Billingsley}{1999}]{billingsley1999cpm}
\begin{bbook}[author]
\bauthor{\bsnm{Billingsley},~\bfnm{Patrick}\binits{P.}}
(\byear{1999}).
\btitle{Convergence of Probability Measures}, \bedition{2} ed.
\bpublisher{John Wiley and Sons}, \baddress{New York}.
\end{bbook}
\endbibitem

\bibitem[\protect\citeauthoryear{Bingham, Goldie and
  Teugels}{1989}]{bingham:goldie:teugels:1989}
\begin{bbook}[author]
\bauthor{\bsnm{Bingham},~\bfnm{N.~H.}\binits{N.~H.}},
  \bauthor{\bsnm{Goldie},~\bfnm{C.~M.}\binits{C.~M.}} \AND
  \bauthor{\bsnm{Teugels},~\bfnm{J.~L.}\binits{J.~L.}}
(\byear{1989}).
\btitle{Regular Variation}.
\bpublisher{Cambridge University Press}.
\end{bbook}
\endbibitem

\bibitem[\protect\citeauthoryear{Bondesson}{1982}]{bondesson1982simulation}
\begin{barticle}[author]
\bauthor{\bsnm{Bondesson},~\bfnm{L.}\binits{L.}}
(\byear{1982}).
\btitle{On simulation from infinitely divisible distributions}.
\bjournal{Advances in Applied Probability}
\bvolume{14}
\bpages{855--869}.
\end{barticle}
\endbibitem

\bibitem[\protect\citeauthoryear{Chen}{1999}]{chen:1999}
\begin{barticle}[author]
\bauthor{\bsnm{Chen},~\bfnm{Xia}\binits{X.}}
(\byear{1999}).
\btitle{How Often Does a {H}arris Recurrent {M}arkov Chain recur?}
\bjournal{The Annals of Probability}
\bvolume{27}
\bpages{1324-1346}.
\end{barticle}
\endbibitem

\bibitem[\protect\citeauthoryear{de~Acosta}{1994}]{deAcosta:1994}
\begin{barticle}[author]
\bauthor{\bparticle{de~}\bsnm{Acosta},~\bfnm{A.}\binits{A.}}
(\byear{1994}).
\btitle{Large Deviations for Vector-Valued {L}\'evy Processes}.
\bjournal{Stochastic Processes and their Applications}
\bvolume{51}
\bpages{75--115}.
\bmrnumber{MR1288284 (96b:60060)}
\end{barticle}
\endbibitem

\bibitem[\protect\citeauthoryear{Dembo and
  Zeitouni}{1998}]{dembo:zeitouni:1998}
\begin{bbook}[author]
\bauthor{\bsnm{Dembo},~\bfnm{Amir}\binits{A.}} \AND
  \bauthor{\bsnm{Zeitouni},~\bfnm{Ofer}\binits{O.}}
(\byear{1998}).
\btitle{Large Deviations Techniques and Applications},
\bedition{Second} ed.
\bseries{Applications in Mathematics}.
\bpublisher{Springer-Verlag}, \baddress{New York}.
\end{bbook}
\endbibitem

\bibitem[\protect\citeauthoryear{Deuschel and
  Stroock}{1989}]{deuschel:stroock:1989}
\begin{bbook}[author]
\bauthor{\bsnm{Deuschel},~\bfnm{J.~D.}\binits{J.~D.}} \AND
  \bauthor{\bsnm{Stroock},~\bfnm{D.}\binits{D.}}
(\byear{1989}).
\btitle{Large Deviations}.
\bpublisher{Academic Press}, \baddress{Boston}.
\end{bbook}
\endbibitem

\bibitem[\protect\citeauthoryear{Donsker and
  Varadhan}{1985}]{donsker:varadhan:1985}
\begin{barticle}[author]
\bauthor{\bsnm{Donsker},~\bfnm{M.~D.}\binits{M.~D.}} \AND
  \bauthor{\bsnm{Varadhan},~\bfnm{S.~R.~S.}\binits{S.~R.~S.}}
(\byear{1985}).
\btitle{Large Deviations for Stationary Gaussian Processes}.
\bjournal{Communications in Mathematical Physics}
\bvolume{97}
\bpages{187-210}.
\end{barticle}
\endbibitem

\bibitem[\protect\citeauthoryear{Doob}{1953}]{doob1953stochastic}
\begin{bbook}[author]
\bauthor{\bsnm{Doob},~\bfnm{J.~L.}\binits{J.~L.}}
(\byear{1953}).
\btitle{Stochastic processes}.
\bpublisher{New York}.
\end{bbook}
\endbibitem

\bibitem[\protect\citeauthoryear{Ellis}{1984}]{ellis:1984}
\begin{barticle}[author]
\bauthor{\bsnm{Ellis},~\bfnm{Richard~S.}\binits{R.~S.}}
(\byear{1984}).
\btitle{Large Deviaions for a General Class of Random Vectors}.
\bjournal{The Annals of Probability}
\bvolume{12}
\bpages{1-12}.
\end{barticle}
\endbibitem

\bibitem[\protect\citeauthoryear{Ferguson and
  Klass}{1972}]{ferguson1972representation}
\begin{barticle}[author]
\bauthor{\bsnm{Ferguson},~\bfnm{T.~S.}\binits{T.~S.}} \AND
  \bauthor{\bsnm{Klass},~\bfnm{M.~J.}\binits{M.~J.}}
(\byear{1972}).
\btitle{A representation of independent increment processes without Gaussian
  components}.
\bjournal{The Annals of Mathematical Statistics}
\bvolume{43}
\bpages{1634--1643}.
\end{barticle}
\endbibitem

\bibitem[\protect\citeauthoryear{Gartner}{1977}]{gartner:1977}
\begin{barticle}[author]
\bauthor{\bsnm{Gartner},~\bfnm{J.}\binits{J.}}
(\byear{1977}).
\btitle{On Large Deviations from the Invariant Measure}.
\bjournal{Theory of Probability and its Applications}
\bvolume{22}
\bpages{24-39}.
\end{barticle}
\endbibitem

\bibitem[\protect\citeauthoryear{Ghosh and
  Samorodnitsky}{2009}]{ghosh:samorodnitsky:2009}
\begin{barticle}[author]
\bauthor{\bsnm{Ghosh},~\bfnm{Souvik}\binits{S.}} \AND
  \bauthor{\bsnm{Samorodnitsky},~\bfnm{Gennady}\binits{G.}}
(\byear{2009}).
\btitle{The Effect of Memory on Functional Large Deviations of Infinite Moving
  Average Processes}.
\bjournal{Stochastic Processes and their Applications}
\bvolume{119}
\bpages{534-561}.
\end{barticle}
\endbibitem

\bibitem[\protect\citeauthoryear{Hoffman and
  Rosenthal}{1995}]{hoffman1995convergence}
\begin{barticle}[author]
\bauthor{\bsnm{Hoffman},~\bfnm{J.~R.}\binits{J.~R.}} \AND
  \bauthor{\bsnm{Rosenthal},~\bfnm{J.~S.}\binits{J.~S.}}
(\byear{1995}).
\btitle{Convergence of Independent Particle Systems}.
\bjournal{Stochastic Processes and their Applications}
\bvolume{56}
\bpages{295--305}.
\end{barticle}
\endbibitem

\bibitem[\protect\citeauthoryear{Kallenberg}{1983}]{kallenberg1983random}
\begin{bbook}[author]
\bauthor{\bsnm{Kallenberg},~\bfnm{O.}\binits{O.}}
(\byear{1983}).
\btitle{Random Measures}.
\bpublisher{Akademie-Verlag}, \baddress{Berlin}.
\end{bbook}
\endbibitem

\bibitem[\protect\citeauthoryear{Le~Gall and Rosen}{1991}]{legallrosen:1991}
\begin{barticle}[author]
\bauthor{\bsnm{Le~Gall},~\bfnm{J.~F.}\binits{J.~F.}} \AND
  \bauthor{\bsnm{Rosen},~\bfnm{J.}\binits{J.}}
(\byear{1991}).
\btitle{The Range of Stable Random Walks}.
\bjournal{The Annals of Probability}
\bvolume{19}
\bpages{650-705}.
\end{barticle}
\endbibitem

\bibitem[\protect\citeauthoryear{Liggett and Port}{1988}]{liggett1988systems}
\begin{barticle}[author]
\bauthor{\bsnm{Liggett},~\bfnm{TM}\binits{T.}} \AND
  \bauthor{\bsnm{Port},~\bfnm{SC}\binits{S.}}
(\byear{1988}).
\btitle{Systems of Independent {M}arkov Chains}.
\bjournal{Stochastic Processes and their Applications}
\bvolume{28}
\bpages{1--22}.
\end{barticle}
\endbibitem

\bibitem[\protect\citeauthoryear{Marcus and
  Rosi{\'n}ski}{2005}]{marcus2005continuity}
\begin{barticle}[author]
\bauthor{\bsnm{Marcus},~\bfnm{M.~B.}\binits{M.~B.}} \AND
  \bauthor{\bsnm{Rosi{\'n}ski},~\bfnm{J.}\binits{J.}}
(\byear{2005}).
\btitle{Continuity and Boundedness of Infinitely Divisible Processes: a Poisson
  Point Process Approach}.
\bjournal{Journal of Theoretical Probability}
\bvolume{18}
\bpages{109--160}.
\end{barticle}
\endbibitem

\bibitem[\protect\citeauthoryear{Maruyama}{1970}]{maruyama:1970}
\begin{barticle}[author]
\bauthor{\bsnm{Maruyama},~\bfnm{G.}\binits{G.}}
(\byear{1970}).
\btitle{Infinitely Divisible Processes}.
\bjournal{Theory of Probability and its Applications}
\bvolume{15}
\bpages{1-22}.
\end{barticle}
\endbibitem

\bibitem[\protect\citeauthoryear{Meyn and Tweedie}{1993}]{meyntweedie:1993}
\begin{bbook}[author]
\bauthor{\bsnm{Meyn},~\bfnm{S.~P.}\binits{S.~P.}} \AND
  \bauthor{\bsnm{Tweedie},~\bfnm{R.~L.}\binits{R.~L.}}
(\byear{1993}).
\btitle{{M}arkov Chains and Stochastic Stability}.
\bpublisher{Springer}, \baddress{London}.
\end{bbook}
\endbibitem

\bibitem[\protect\citeauthoryear{Mikosch and
  Samorodnitsky}{2000}]{mikosch2000rpc}
\begin{barticle}[author]
\bauthor{\bsnm{Mikosch},~\bfnm{T.}\binits{T.}} \AND
  \bauthor{\bsnm{Samorodnitsky},~\bfnm{G.}\binits{G.}}
(\byear{2000}).
\btitle{Ruin Probability with Claims Modeled by a Stationary Ergodic Stable
  Process}.
\bjournal{The Annals of Probability}
\bvolume{28}
\bpages{1814--1851}.
\end{barticle}
\endbibitem

\bibitem[\protect\citeauthoryear{Nummelin}{1978}]{nummelin:1978}
\begin{barticle}[author]
\bauthor{\bsnm{Nummelin},~\bfnm{E.}\binits{E.}}
(\byear{1978}).
\btitle{A Splitting Technique for {H}arris Recurrent Chains}.
\bjournal{Probability Theory and Related Fields}
\bvolume{43}
\bpages{309-318}.
\end{barticle}
\endbibitem

\bibitem[\protect\citeauthoryear{Nummelin}{1984}]{nummelin:1984}
\begin{bbook}[author]
\bauthor{\bsnm{Nummelin},~\bfnm{E.}\binits{E.}}
(\byear{1984}).
\btitle{General Irreducible {M}arkov Chains and Non-negative Operators}.
\bpublisher{Cambridge University Press}.
\end{bbook}
\endbibitem

\bibitem[\protect\citeauthoryear{Rajput and
  Rosinski}{1989}]{rajput1989spectral}
\begin{barticle}[author]
\bauthor{\bsnm{Rajput},~\bfnm{B.~S.}\binits{B.~S.}} \AND
  \bauthor{\bsnm{Rosinski},~\bfnm{J.}\binits{J.}}
(\byear{1989}).
\btitle{Spectral Representations of Infinitely Divisible Processes}.
\bjournal{Probability Theory and Related Fields}
\bvolume{82}
\bpages{451--487}.
\end{barticle}
\endbibitem

\bibitem[\protect\citeauthoryear{Resnick}{2008}]{resnick:1987}
\begin{bbook}[author]
\bauthor{\bsnm{Resnick},~\bfnm{Sidney~I.}\binits{S.~I.}}
(\byear{2008}).
\btitle{Extreme Values, Regular Variation and Point Processes}.
\bpublisher{Springer-Verlag}, \baddress{Berlin, New York}.
\end{bbook}
\endbibitem

\bibitem[\protect\citeauthoryear{Rosi{\'n}ski}{1990}]{rosinski:1990}
\begin{barticle}[author]
\bauthor{\bsnm{Rosi{\'n}ski},~\bfnm{Jan}\binits{J.}}
(\byear{1990}).
\btitle{On Series Representation of Infinitely Divisible Random Vectors}.
\bjournal{The Annals of Probability}
\bvolume{18}
\bpages{405-430}.
\end{barticle}
\endbibitem

\bibitem[\protect\citeauthoryear{Rosi{\'n}ski}{2007}]{rosinski:2007}
\begin{bunpublished}[author]
\bauthor{\bsnm{Rosi{\'n}ski},~\bfnm{Jan}\binits{J.}}
(\byear{2007}).
\btitle{{L\'evy and Related Jump-Type Infinitely Divisible Processes}}.
\bnote{{Lecture Notes}}.
\end{bunpublished}
\endbibitem

\bibitem[\protect\citeauthoryear{Rosi{\'n}ski and
  Samorodnitsky}{1996}]{rosinski1996cms}
\begin{barticle}[author]
\bauthor{\bsnm{Rosi{\'n}ski},~\bfnm{J.}\binits{J.}} \AND
  \bauthor{\bsnm{Samorodnitsky},~\bfnm{G.}\binits{G.}}
(\byear{1996}).
\btitle{Classes of Mixing Stable Processes}.
\bjournal{Bernoulli}
\bvolume{2}
\bpages{365-378}.
\end{barticle}
\endbibitem

\bibitem[\protect\citeauthoryear{Rosi{\'n}ski and
  {\.Z}ak}{1997}]{rosinskiZak:1997}
\begin{barticle}[author]
\bauthor{\bsnm{Rosi{\'n}ski},~\bfnm{Jan}\binits{J.}} \AND \bauthor{\bsnm{{\.
  Z}ak},~\bfnm{Tomasz}\binits{T.}}
(\byear{1997}).
\btitle{The Equivalence of Ergodicity and Weak Mixing for Infinitely Divisible
  Processes}.
\bjournal{Journal of Theoretical Probability}
\bvolume{10}
\bpages{73--86}.
\bmrnumber{MR1432616 (97m:60044)}
\end{barticle}
\endbibitem

\bibitem[\protect\citeauthoryear{Roy}{2007}]{roy2007ergodic}
\begin{barticle}[author]
\bauthor{\bsnm{Roy},~\bfnm{E.}\binits{E.}}
(\byear{2007}).
\btitle{Ergodic Properties of {P}oissonian {ID} Processes}.
\bjournal{The Annals of Probability}
\bvolume{35}
\bpages{551}.
\end{barticle}
\endbibitem

\bibitem[\protect\citeauthoryear{Sato}{1999}]{sato:1999}
\begin{bbook}[author]
\bauthor{\bsnm{Sato},~\bfnm{Keniti}\binits{K.}}
(\byear{1999}).
\btitle{L\'evy Processes and Infinitely Divisible Distributions}.
\bpublisher{Cambridge University Press}.
\end{bbook}
\endbibitem

\bibitem[\protect\citeauthoryear{Spitzer}{1977}]{spitzer1977stochastic}
\begin{barticle}[author]
\bauthor{\bsnm{Spitzer},~\bfnm{F.}\binits{F.}}
(\byear{1977}).
\btitle{Stochastic Time Evolution of One Dimensional Infinite Particle
  Systems}.
\bjournal{American Mathematical Society}
\bvolume{83}
\bpages{880}.
\end{barticle}
\endbibitem

\bibitem[\protect\citeauthoryear{Varadhan}{1984}]{varadhan:1984}
\begin{bbook}[author]
\bauthor{\bsnm{Varadhan},~\bfnm{S.~R.~S.}\binits{S.~R.~S.}}
(\byear{1984}).
\btitle{Large Deviations and Applications}.
\bpublisher{SIAM}, \baddress{Philadelphia}.
\end{bbook}
\endbibitem

\bibitem[\protect\citeauthoryear{Zolotarev}{1986}]{zolotarev:1986}
\begin{bbook}[author]
\bauthor{\bsnm{Zolotarev},~\bfnm{V.~M.}\binits{V.~M.}}
(\byear{1986}).
\btitle{One-dimensional Stable Distributions}.
\bpublisher{American Mathematical Society}, \baddress{Providence, Rhode
  Island}.
\end{bbook}
\endbibitem

\end{thebibliography}
\end{document}